\documentclass[11pt,reqno]{article}

\usepackage{amsmath,amssymb,amsthm}
\usepackage{mathtools, mathrsfs}

\usepackage[margin=1in]{geometry}
\usepackage{indentfirst}
\usepackage{titlesec}
\usepackage{datetime,comment}
\usepackage[utf8]{inputenc}
\usepackage{enumitem}

\usepackage{xcolor}

\usepackage[colorlinks,linkcolor = blue,citecolor = blue]{hyperref}

\usepackage{cite}

\title{Uniqueness Criteria for the Oseen Vortex in the 3d Navier-Stokes Equations}
\author{Jacob Bedrossian\thanks{\footnotesize Department of Mathematics, University of Maryland, College Park, MD 20742, USA \href{mailto:jacob@math.umd.edu}{\texttt{jacob@math.umd.edu}}. J.B. was supported by NSF CAREER grant DMS-1552826} \and William Golding\thanks{\footnotesize Department of Mathematics, University of Texas, Austin, TX 78712, USA \href{mailto:wgolding@utexas.edu}{\texttt{wgolding@utexas.edu}}. W.G. was partially supported by NSF-DMS grant 1840314}}
\setcounter{tocdepth}{1}
\titleformat{\section}
{\normalfont\Large\bfseries}
{\thesection.}{0.5em}{}
\titlespacing{\section}{12pc}{1.5ex plus .1ex minus .2ex}{1pc}

\titleformat{\subsection}
{\normalfont\bfseries}
{\thesubsection.}{0.4em}{}

\newtheorem{theorem}{Theorem}[section]
\newtheorem{proposition}{Proposition}[section]
\newtheorem{corollary}{Corollary}[theorem]
\newtheorem{lemma}{Lemma}[section]
\theoremstyle{definition}
\newtheorem{definition}{Definition}[section]
\newtheorem{remark}{Remark}
\theoremstyle{definition}

\numberwithin{equation}{section}

\DeclareMathOperator{\T}{\mathbb{T}}
\DeclareMathOperator{\R}{\mathbb{R}}

\DeclareMathOperator{\Z}{\mathbb{Z}}
\DeclareMathOperator{\N}{\mathbb{N}}
\DeclareMathOperator{\A}{\mathcal{A}}
\DeclareMathOperator{\F}{\mathcal{F}}
\let\S\relax
\DeclareMathOperator{\S}{\mathcal{S}}

\DeclareMathOperator{\tens}{\otimes}

\DeclareMathOperator{\embeds}{\hookrightarrow}
\DeclareMathOperator{\weakstar}{\rightharpoonup^*}

\let\div\relax
\let\L\relax

\DeclareMathOperator{\L}{\mathcal{L}}
\DeclareMathOperator{\Tau}{\mathcal{T}}
\DeclareMathOperator{\div}{\mathrm{div}}
\DeclareMathOperator{\sdv}{\overline{\nabla}}
\DeclareMathOperator{\slap}{\overline{\Delta}}
\DeclareMathOperator{\sdz}{\overline{\partial_{\mathit{z}}}}
\DeclareMathOperator{\divz}{\overline{\div}}

\DeclareMathOperator{\grad}{\nabla}
\DeclareMathOperator{\Real}{\mathbb{R}}


\newcommand{\norm}[1]{\left|\left| #1 \right|\right|}
\newcommand{\abs}[1]{\left| #1 \right|}
\newcommand{\set}[1]{\left\{ #1 \right\}}
\newcommand{\brak}[1]{\left\langle #1 \right\rangle}


\begin{document}
\maketitle
\abstract{
  In this paper, we consider the uniqueness of solutions to the 3d Navier-Stokes equations with initial vorticity given by $\omega_0 = \alpha e_z \delta_{x = y = 0}$, where $\delta_{x=y= 0}$ is the one dimensional Hausdorff measure of an infinite, vertical line and $\alpha \in \mathbb R$ is an arbitrary circulation.
  This initial data corresponds to an idealized, infinite vortex filament. One smooth, mild solution is given by the self-similar Oseen vortex column, which coincides with the heat evolution. Previous work by Germain, Harrop-Griffiths, and the first author implies that this solution is unique within a class of mild solutions that converge to the Oseen vortex in suitable self-similar weighted spaces.  In this paper, the uniqueness class of the Oseen vortex is expanded to include any solution that converges to the initial data in a sufficiently strong sense. This gives further evidence in support of the expectation that the Oseen vortex is the only possible mild solution that is identifiable as a vortex filament. The proof is a 3d variation of a 2d compactness/rigidity argument in $t \searrow 0$ originally due to Gallagher and Gallay.   
}

\tableofcontents


\section{Introduction}
We consider the incompressible Navier-Stokes equations on $\R^3$ written in vorticity form as
\begin{equation}
\begin{aligned}
& \partial_t \omega + u \cdot \nabla \omega - \omega \cdot \nabla u = \Delta \omega\label{vorticity}\\
& \nabla \cdot \omega = 0
\end{aligned}
\end{equation}
where $\omega = \grad \times u$ is the vorticity, and $u$ is the velocity, which is determined from $\omega$ via the Biot-Savart law:
\begin{equation}
u = -\nabla \times \Delta^{-1} \omega.
\end{equation}
We are interested in measure-valued initial vorticity concentrated along a smooth curve in $\R^3$. 
That is, $\omega_0 = \alpha\delta_\Gamma$, where $\alpha\in\R$ is an arbitrary circulation number and $\Gamma \subset \R^3$ is a smooth, oriented curve, and $\delta_\Gamma$ denotes the vector-valued Radon measure defined via its action on continuous compactly supported test functions, $\varphi\in C_c(\R^3;\R^3)$, as
\begin{equation}\label{vortexfilament}
\langle \varphi, \delta_{\Gamma} \rangle = \int_{\Gamma} \varphi \cdot ds,
\end{equation}
where $ds$ is the one-dimensional surface measure on $\Gamma$.
This initial data is not $L^2_{loc}$, so the theory of Leray-Hopf weak solutions does not apply.
Instead, the framework of mild solutions is most natural, i.e. solutions of the corresponding integral equation,
\begin{equation}\label{mild}
\omega(t) = e^{t\Delta}\omega_0 - \int_0^t e^{(t-s)\Delta}\nabla \cdot \left[u(s) \tens \omega(s) - \omega(s) \tens u(s)\right] \ ds.
\end{equation}
We call solutions to (\ref{mild}) with initial data of type $\omega_0 = \alpha \delta_{\Gamma}$ \emph{vortex filament solutions}. 
The prototypical vortex filament solution is the self-similar \emph{Oseen vortex column}.  
Consider the infinite straight filament, $\omega_0$ concentrated along the $z$-axis. Decomposing $\R^3$ as $\R^2 \times \R$ with coordinates $(x,z)$, we write $\omega_0 = \alpha \delta_{x = 0}$. The Oseen vortex is given by the heat evolution: defining
\begin{equation}
G(\xi) = \frac{1}{4\pi}e^{-\frac{|\xi|^2}{4}} \qquad \text{and} \qquad v^G(\xi) = \frac{1}{2\pi}\frac{\xi^\perp}{|\xi|^2}\left(1 - e^{-\frac{|\xi|^2}{4}}\right), 
\end{equation}
we have
\begin{equation}\label{explicit}
\omega^g(t,x,z) = \frac{\alpha}{t}\begin{bmatrix} 0\\  0 \\ G\left(\frac{x}{\sqrt{t}}\right)\end{bmatrix} \qquad \text{and}\qquad u^g(t,x,z) = \frac{\alpha}{\sqrt{t}}\begin{bmatrix} v^G\left(\frac{x}{\sqrt{t}}\right)\\ 0 \end{bmatrix}.
\end{equation}
Note that these are simply the 3d extensions of the point-vortex solutions of the 2d vorticity equations, whose uniqueness properties were studied in \cite{gallaghergallaylions} (see also \cite{gallaghergallay}).  

A great deal of interest in such initial data stems from experimental observations that vorticity tends to concentrate on 1-dimensional structures in $\R^3$ known as \emph{vortex filaments}.  Vortex filaments are observed to propagate coherently on relatively large time scales, for example, as in wing-tip vortices (see e.g. \cite{DevenportEtAl}). Moreover, vortex filaments are thought to potentially play a role in intermittency in turbulent flows (see e.g. \cite{MoffatEtal94}).
After early work by Helmholtz and Kelvin \cite{helmholtz,kelvin1,kelvin2}, da Rios formally derived the binormal approximation for the evolution of smooth, non-self-intersecting vortex filaments \cite{daRios}. This approximation asserts that the center-line of a vortex filament in the 3d Euler equations tightly concentrated to be width $\delta$ will (approximately) evolve with a velocity given by $V = \frac{1}{4\pi} \alpha \kappa \log \delta \mathbf b$ where $\alpha$ is the circulation, $\kappa$ is the curvature, and $\mathbf{b}$ is the binormal.
This approximation was subsequently re-discovered several times in the physics literature and is now usually called the local induction approximation (LIA) (see \cite{JS17,MajdaBertozzi,Ricca96} and the references therein). In the mathematics literature, the work of \cite{JS17} justifies the LIA for solutions of the 3d Euler equations assuming that the vorticity remains concentrated in a sufficiently small tube.
The limit $\delta \to 0$ is a very singular limit for the Euler equations and hence it is natural also to study vortex filaments in the Navier-Stokes equations, allowing the viscosity to provide a natural regularization (as opposed to a finite width assumption).
It is also worth remarking that vortex filaments may be subject to high frequency instabilities at the scale of the vortex core (see e.g. \cite{WidnallEtal1974}), posing an additional difficulty in justifying any $\delta \to 0$ asymptotics. Finally, we remark that the binormal flow itself has been intensively studied in the literature (e.g.~\cite{MR3363420,MR3353807,MR3059243,MR3429472,MR3116002,MR3204898,MR2862038,MR2472037,2018arXiv180706948B}). 

Another part of the interest in such initial data is that it is in some sense, exactly on the borderline of the local well-posedness theory of Navier-Stokes. 
To our knowledge, initial data of the type \eqref{vortexfilament} for the Navier-Stokes equations was first studied by Giga and Miyakawa \cite{gigamiyakawa}, who proved global well-posedness assuming $\abs{\alpha}$ sufficiently small. Axisymmetric vortex filaments (i.e. vortex rings) were studied more recently in \cite{FengSverak15,GallaySverak16} providing existence and uniqueness in the axisymmetric class for $\alpha$ arbitrary. 
The recent work \cite{jacob} provides the first existence and (limited) uniqueness results for vortex filament solutions with arbitrary $\alpha$ and no symmetry assumptions (as well as sufficiently regular background vorticity and multiple filaments). 
As discussed in \cite{gigamiyakawa},  the initial data (\ref{vortexfilament}) lives in a Morrey space of vector-valued measures, $M^{3/2}$, the set of Radon measures such that 
\begin{equation}
\|\mu\|_{M^{3/2}} = \sup_{r > 0, x\in\R^3}\frac{ |\mu(B_r(x))|}{|r|} < \infty. 
\end{equation}
If $\omega$ is a solution to (\ref{vorticity}), then for any $\lambda > 0$, $\omega_\lambda$ is a solution to (\ref{vorticity}), where
\begin{equation}
\omega_\lambda(t,x) = \frac{1}{\lambda}\omega\left(\frac{t}{\lambda},\frac{x}{\sqrt{\lambda}}\right) \qquad \text{and} \qquad u_\lambda(t,x) = \frac{1}{\sqrt{\lambda}}u\left(\frac{t}{\lambda},\frac{x}{\sqrt{\lambda}}\right),\label{scaling}
\end{equation}
and hence we see that $M^{3/2}$ is scale-invariant; recall that spaces of this type are called \emph{critical} (the critical Lebesgue space in vorticity form is $L^{3/2} \subset M^{3/2}$). Although such spaces are natural borderline spaces, critical spaces need not preserve the relative size of linear and non-linear effects.
In velocity form, the most well-known critical spaces are $\dot{H}^{1/2} \subset L^3 \subset L^{3,\infty} \subset BMO^{-1} \subset B^{-1}_{\infty,\infty}$. 
In \cite{fujitakato,kato}, local-in-time well-posedness was proved for $\dot{H}^{1/2}$ and $L^3$ respectively for all initial data. 
In such spaces one can take time small to make the nonlinear contributions smaller and hence treat the equation as a perturbation of the heat equation.  
Critical spaces which contain self-similar solutions, such as $L^{3,\infty}$ in velocity form and $M^{3/2}$ in vorticity form, clearly cannot satisfy this property, and so are in some sense ``more critical''.  We follow \cite{jacob} and call a space $X$ \emph{ultra-critical} if $X$ is critical and the Schwartz class is \emph{not} dense in $X$; spaces containing the initial data of self-similar solutions are of this type. 
In 3d, global well-posedness for small data is known in $L^{3,\infty}$ and $BMO^{-1}$ \cite{kochtataru} in velocity form and in $M^{3/2}$ in vorticity form \cite{gigamiyakawa}; see also \cite{chemin,cannone1,cannoneplanchon,planchon,cannone2,Taylor92,HideoMasao94} for numerous refinements). 
However, even existence of large\footnote{by which we mean a large part of the initial data cannot be approximated by Schwartz class functions.} data mild solutions is open except for a small set of trivial examples, the self-similar solutions constructed in $L^{3,\infty}$ \cite{haosverak}, the $M^{3/2}$ vortex rings studied in \cite{FengSverak15,GallaySverak16}, and the $M^{3/2}$ vortex filaments solutions constructed in \cite{jacob}. Local-in-time ill-posedness even for small initial data in $B^{-1}_{\infty,\infty}$ was proved in \cite{bourgain}.

In 2d, the ultra-critical class for the vorticity equations analogous to $M^{3/2}$ is the space of finite Borel measures taken with the total variation norm. Existence was proved in \cite{cottet} and \cite{gigamiyakawaosada} (independently). Well-posedness for small atomic parts was also proved in \cite{gigamiyakawaosada}.
In \cite{gallaghergallaylions}, it was proved that the Oseen vortex is the unique mild solution in 2d with initial data given by $\alpha \delta_{x=0}$. This result plays an important role underlying the well-posedness for arbitrary measures ultimately proved by Gallagher and Gallay in \cite{gallaghergallay} (see also \cite{Gallay12,GallagherPlanchon02,Pierre06}). 

As discussed in \cite{JiaSverak15,GuilllodSverak17}, local well-posedness for 3d Navier-Stokes in ultra-critical regularity is not expected in general.
For example, under natural spectral conditions (still unverified for any concrete examples), instabilities will dominate and lead to multiple smooth solutions from the same locally scale-invariant initial data \cite{JiaSverak15}.
Despite the potential difficulties, \cite{jacob} demonstrates a well-posedness class in the neighborhood of self-similar, and (in some sense) approximately self-similar initial data, provided the filament is smooth and non-self-intersecting (and it is quite possible that well-posedness is false otherwise, e.g. if the filament is not sufficiently smooth or there are intersections).
Roughly speaking, it is proved that there is only one solution that looks like the Oseen vortex column if one zooms into the singularity at $t \searrow 0$. Moreover, heuristically at least, \cite{jacob} demonstrates that curvature effects are `sub-critical' and that the true, `ultra-critical' difficulty is in understanding perturbations of the Oseen vortex column itself.

\section{Main results}
In this paper, we improve the uniqueness criteria for the Oseen vortex solution \eqref{explicit} provided in \cite{jacob}, that is, we expand the class of mild solutions for which we can deduce \eqref{explicit} is the only possible solution with initial data $\omega_0 = \alpha \delta_{x=0}$. See Remark \ref{rmk:Meanings} for a discussion of the meaning of our class. 
We will be concerned with two cases, the infinite straight filament and the straight filament with periodic boundary conditions. In order to define notation for both cases simultaneously, in the following, let $D = \T, \R$ and $D^* = \Z,\R$, the corresponding Pontryagin dual group.

\subsection{Preliminaries}
We recall the (partial) self-similar coordinates used in \cite{jacob}. 
For $x\in \R^2$, $t\in \R^+$, and $z\in D$, define $\xi \in \R^2$ and $\tau \in \R$ via
\begin{equation}
(\tau,\xi,z) = \left(\log(t),\frac{x}{\sqrt{t}},z\right)
\end{equation}
together with the corresponding rescaling of the vorticity and velocity as
\begin{equation}
w(\tau,\xi,z) = e^\tau \omega(e^\tau,e^{\tau/2}\xi,z) \qquad v(\tau,\xi,z) = e^{\tau/2}u(e^\tau,e^{\tau/2}\xi,z).\label{rescaling}
\end{equation}
For the remainder of the paper, $(\omega,u)$ will denote a solution pair in standard coordinates, whereas $(w,v)$ will denote a solution pair in self-similar coordinates, i.e. related to $(\omega,u)$ via (\ref{rescaling}). 
In self-similar coordinates, \eqref{vorticity} becomes 
\begin{equation}\label{svorticity}
\partial_\tau w + (v\cdot \sdv)w - (w \cdot \sdv)v = \L + \sdz^2 w, \qquad v = (-\slap)^{-1}\sdv\times w.
\end{equation}
where 
\begin{equation}
\sdv = \begin{bmatrix} \nabla_\xi\\  \sdz\end{bmatrix} = \begin{bmatrix} \partial_{\xi_1}\\ \partial_{\xi_2} \\ e^{\tau/2}\partial_z\end{bmatrix}, \quad \slap = \Delta_\xi + \sdz^2 = \Delta_\xi + e^{\tau} \partial_{zz},\quad \overline{\textup{div}} = \sdv \cdot, 
\end{equation}
and $\L$ is the 2D Fokker-Planck operator
\begin{equation}
\L = \Delta_{\xi} + \frac{1}{2} \xi \cdot \nabla_\xi  + 1.\label{FokkerPlanck}
\end{equation}
Since our differential operators are time dependent, there might occasionally be ambiguity within an equation as to at what time a derivative is taken. In such instances, we will use the notation $\sdv_\tau$ or $\slap_\tau$ to denote the corresponding operator applied at time $\tau$. 

Define the weighted spaces $L^p(m)$ for $m \in \R^+$ via the norm  
\begin{equation}
\|f\|_{L^p(m)}^p = \|\langle \xi \rangle^m f(\xi)\|_{L^p_\xi}^p = \int_{\R^2} \langle \xi \rangle^{pm} f(\xi)^p \ d\xi\label{lpm norm},
\end{equation}
where $\brak{\xi} = (1 + |\xi|^2)^{1/2}$. 
The Fokker-Planck operator and the 2d Navier-Stokes equations linearized around the Oseen vortex both have spectral gaps in $L^2(m)$ for suitable values $m$, which plays a major role in the works of Gallay and Wayne \cite{gallaywayne1,gallaywayne2}, Gallagher and Gallay \cite{gallaghergallay}, and \cite{jacob}.


We follow \cite{jacob} and use the Wiener algebra in $z$ to build our spaces. For $D$ the $z$ domain, and a Banach space $X$ of measurable functions $g:\R^2\rightarrow \R$, we define $B_zX$ as the associated $X$-valued Wiener algebra of measurable functions $f:\R^2\times D \rightarrow \R$ with the norm,
\begin{equation}
\|f\|_{B_zX} = \int_{D^*} \|\hat f(\zeta)\|_X \ d\zeta,
\end{equation} 
where the Fourier transform is taken only in the $z$-direction and $d\zeta$ denotes the Haar measure corresponding to $D^*$ (either Lebesgue measure or the counting measure).

Finally, we state more precisely the definition of mild solution. We denote $\omega \in C_w([0,T];M^{3/2})$ if for all $t \in [0,T]$, $\omega(t)$ is in $M^{3/2}$,  $\sup_{t \in [0,T]} \norm{\omega(t)}_{M^{3/2}} < \infty$, and $\omega$ is continuous in the weak-$\star$ topology of measures. 
\begin{definition}[Mild Solution] \label{def:Mild}
We say that $\omega \in C_w([0,T];M^{3/2})$ is a \emph{mild solution} of the 3d Navier-Stokes equations with initial data $\mu \in M^{3/2}$ if:
\begin{itemize}
\item for $[0,T]$, $\omega(t)$ satisfies the vorticity equation in the mild sense, i.e. $\omega$ satisfies (\ref{mild}) and the Bochner integral converges in $C_w([0,T];M^{3/2})$;
\item $\omega(t) \rightharpoonup^\ast \mu$ as $t \searrow 0$. 
\end{itemize}
\end{definition}

\subsection{Statement of Main Results}
We are now in a position to state our first main result, which pertains to periodic-in-$z$ solutions. 
The results of \cite{jacob} imply that if the solution in self-similar variables $w(\tau)$ satisfies $w(\tau) \to e_z \alpha G$ in $B_z L^2(m)$ as $\tau \to -\infty$, then $w \equiv e_z \alpha G$ for all $\tau$. 
Here, we improve this uniqueness result to assert that if $w(\tau)$ asymptotically as $\tau \rightarrow -\infty$ is sufficiently two-dimensional, then it must be the Oseen vortex (\ref{explicit}). In the original variables, this can be understood as assuming that the initial data is obtained in a sufficiently strong sense. 

\begin{theorem}\label{thm1}
Suppose $\omega \in C_w([0,T];M^{3/2})$ is a mild solution in the sense of Definition \ref{def:Mild} defined on $\R^2 \times \T$ with initial data $\alpha \delta_{x=0}$ for some $\alpha \in \R$ and that $w$ is the corresponding self-similar version of $\omega$ defined via (\ref{rescaling}).
 If for some $m > 2$,
\begin{itemize}
\item[(i)]  $\limsup_{\tau \to -\infty}\norm{ w(\tau)}_{B_z L^2(m)} < \infty$;
\item[(ii)] $\lim_{\tau \to -\infty}\norm{w^\xi(\tau)}_{B_z L^2(m)} = 0$;
\item[(iii)]  and $\limsup_{\tau \to -\infty}\norm{\partial_z w^z(\tau)}_{B_z L^2(m)} < \infty$,
\end{itemize}
then $\omega$ is the Oseen vortex defined in (\ref{explicit}), i.e. $\omega \equiv \omega^g$ for all $(0,\infty)\times \R^2\times \T$. 
\end{theorem}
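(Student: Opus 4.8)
The plan is to show that hypotheses (i)--(iii) force $w(\tau) \to e_z \alpha G$ in $B_z L^2(m)$ as $\tau \to -\infty$, after which the result of \cite{jacob} recalled above immediately yields $\omega \equiv \omega^g$. The mechanism is the compactness/rigidity scheme of Gallagher and Gallay, run in the deep past $\tau \to -\infty$ of the partially self-similar variables. The point is that as $\tau \to -\infty$ the vertical derivative $\sdz = e^{\tau/2}\partial_z$ degenerates and $\slap \to \Delta_\xi$, and that (using (ii)--(iii)) the vortex-stretching and vertical-transport terms disappear, so the dynamics collapses onto the 2d self-similar vorticity equation; a rigidity step then identifies the limit as the planar Oseen vortex.

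First I would fix the circulation. Since $m > 2$ gives $L^2(m) \hookrightarrow L^1(\R^2)$, the planar circulation $\int_{\R^2} w^z(\tau,\xi,z)\,d\xi$ is well defined, and the weak-$\ast$ convergence $\omega(t) \rightharpoonup^\ast \alpha \delta_{x=0}$ together with the scaling (\ref{rescaling}) and the uniform line density of the filament should pin it to $\alpha$, independent of $z$, in the limit $\tau \to -\infty$; this determines the mass of any limit profile. Next, for a sequence $\tau_n \to -\infty$ I would set $w_n(\tau,\cdot) = w(\tau + \tau_n, \cdot)$ on a fixed window $[-T,0]$. By (i) the family is bounded in $L^\infty_\tau B_z L^2(m)$; recovering $v = (-\slap)^{-1}\sdv \times w$ and estimating it with the Biot--Savart bounds of \cite{jacob}, together with the smoothing of the semigroup generated by $\L + \sdz^2$, I would derive uniform spatial regularity and $\tau$-equicontinuity. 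An Aubin--Lions argument then extracts a subsequence with $w_n \to W$ strongly in $L^2_{loc}$ and weakly-$\ast$ in $L^\infty_\tau B_z L^2(m)$.

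To identify $W$, note that on $[-T,0]$ one has $e^{(\tau + \tau_n)/2} \to 0$, so in the limit all $\sdz$-terms drop out; hypothesis (ii) gives $W^\xi \equiv 0$, and hypothesis (iii) ensures $\sdz w^z = e^{\tau/2}\partial_z w^z \to 0$, killing the remaining vertical transport and stretching. Hence $W = W^z e_z$ where, for each fixed $z$, $W^z$ is a bounded ancient solution in $L^2(m)$ of the 2d self-similar equation $\partial_\tau W^z + v^\xi \cdot \nabla_\xi W^z = \L W^z$ with $v^\xi$ the planar Biot--Savart field of $W^z$ and with circulation $\alpha$. The Gallagher--Gallay Liouville theorem then forces $W^z \equiv \alpha G$, so $W \equiv e_z \alpha G$. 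Since the subsequential limit is independent of the chosen sequence $\tau_n$, we obtain $w(\tau) \to e_z \alpha G$ as $\tau \to -\infty$; controlling the $\zeta$-tail of the Wiener norm via (i) and (iii) makes this hold in the full $B_z L^2(m)$ norm, and \cite{jacob} then closes the argument.

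I expect the main obstacle to be the compactness together with the passage to the limit in the nonlinearity. The Biot--Savart operator $(-\slap)^{-1}\sdv\times$ is nonlocal and $\tau$-dependent, so producing velocity estimates and $\tau$-equicontinuity that are \emph{uniform} as $\tau \to -\infty$ from only the weighted $L^2(m)$ control and (iii) is the delicate analytic point; and assembling the mode-wise convergence into convergence in the \emph{full} $B_z L^2(m)$ norm --- the genuinely 3d ingredient absent from the work of Gallagher and Gallay --- requires uniform tail control in the vertical frequency $\zeta$.
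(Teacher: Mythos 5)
Your proposal is correct and follows essentially the same route as the paper: parabolic regularity from (i), vanishing of $\sdv w^\xi$ from (ii), vertical-frequency tightness from (iii) to get precompactness of $\{w^z(\tau)\}$ in $B_zL^2(m')$, passage to the limiting 2d self-similar dynamics as $\sdz$ degenerates, rigidity to identify the limit as $\alpha G$ (with the circulation pinned by the weak-$\ast$ initial data), and then the uniqueness class of \cite{jacob} as a black box. The only cosmetic differences are that the paper phrases the rigidity step via full invariance of the $\alpha$-limit set under the 2d flow and the Gallay--Wayne classification of invariant sets in $L^2(m)$ (rather than a Liouville theorem for bounded eternal solutions, which is the same statement in different clothing --- and is due to Gallay--Wayne, not Gallagher--Gallay), and that it obtains convergence on time windows by a Duhamel/Gr\"onwall comparison rather than Aubin--Lions.
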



\begin{remark} \label{rmk:Meanings}
Assumptions (i)--(iii) can be interpreted as a strengthening of $\omega(t) \rightharpoonup^\ast \alpha \delta_{x = 0}$ as $t \searrow 0$. Assumption (i) controls the magnitude of the singularity and provides the natural, quantitative control on the spreading of the solution from where the initial data is concentrated. Assumption (ii) provides additional control on the potential oscillations of $\omega^\xi$ as $\omega^\xi \rightharpoonup^\ast 0$. Assumption (iii) provides additional control on the potential oscillations of $\omega^z$ along the filament.     
\end{remark}

\begin{remark} \label{rmk:Aiii}
Assumption (iii) is taken to provide compactness as $\tau \to -\infty$ that is not automatic from parabolic regularity combined with Assumption (i), and can be replaced with the following compactness criterion: for all $\epsilon > 0$, $\exists R$ such that
\begin{align*}
\limsup_{\tau \to -\infty}\int_{\abs{\zeta} > R}\norm{\widehat{w^z}(\tau,\zeta)}_{L^2(m)} d\zeta < \epsilon. 
\end{align*}
This compactness is crucial for our proof, because parabolic regularity is only sufficient to prove $\|w(\tau)\|_{B_zL^2(m)} \lesssim 1$ implies $\|e^{\tau/2}\partial_z w^z(\tau)\|_{B_zL^2(m)} \lesssim 1$ (see Proposition \ref{prop5} in Section \ref{sec:parabolic}). 
\end{remark}

\begin{remark}
	The uniqueness result of \cite{jacob} only rules out other solutions which are very close to the Oseen vortex in the ultra-critical space $M^{3/2}$ as $t \to 0$. On the other hand, while Theorem \ref{thm1} does not imply the uniqueness result of \cite{jacob},
Theorem \ref{thm1} does rule out a wide variety of potential solutions far from the Oseen vortex in the ultra-critical space, at the cost
of requiring additional control on $z$-derivatives through Assumption (iii).
\end{remark}

From Theorem \ref{thm1}, we can deduce the following second criteria for uniqueness of $\R^2\times \T$, which is a slight variant.

\begin{corollary}\label{cor1}
  Suppose $\omega \in C_w([0,T];M^{3/2})$ is a mild solution in the sense of Definition \ref{def:Mild} defined on $\R^2 \times \T$ with initial data $\alpha \delta_{x=0}$ for some $\alpha \in \R$ and that $w$ is the corresponding self-similar version of $\omega$ defined via (\ref{rescaling}).
 If for some $m > 2$,
\begin{itemize}
\item[(i)]  $\limsup_{\tau \to -\infty}\norm{ w(\tau)}_{B_z L^2(m)} < \infty$;
\item[(ii)]  $\lim_{\tau\rightarrow -\infty}w^\xi(\tau) = 0$ in $B_zL^2(m)$ with the weak topology;
\item[(iii)]  and $\limsup_{\tau \to -\infty}\norm{\partial_z w(\tau)}_{B_z L^2(m)} < \infty$,
\end{itemize}
then $\omega$ is the Oseen vortex defined in (\ref{explicit}), i.e. $\omega = \omega^g$ pointwise in $(0,\infty)\times \R^2\times \T$.
\end{corollary}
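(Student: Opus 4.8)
The plan is to reduce Corollary \ref{cor1} to Theorem \ref{thm1} by showing that the three hypotheses of the corollary imply the three hypotheses of the theorem. Assumption (i) is identical in both statements. Assumption (iii) of the corollary controls $\partial_z w$ for the full vorticity vector $w=(w^\xi,w^z)$, and since $w^z$ is one of its components we have $\norm{\partial_z w^z(\tau)}_{B_zL^2(m)}\le\norm{\partial_z w(\tau)}_{B_zL^2(m)}$, so assumption (iii) of the corollary immediately implies assumption (iii) of the theorem. The entire content of the argument is therefore to upgrade the weak convergence $w^\xi(\tau)\weak 0$ of assumption (ii) of the corollary to the strong convergence $\norm{w^\xi(\tau)}_{B_zL^2(m)}\to 0$ of assumption (ii) of the theorem; once this is done, Theorem \ref{thm1} applies verbatim and gives $\omega\equiv\omega^g$.

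First I would establish that the family $\{w^\xi(\tau)\}_{\tau\le\tau_0}$ is precompact in the strong topology of $B_zL^2(m)=L^1(D^*;L^2(m))$ for $\tau_0$ sufficiently negative. The crucial new input, relative to the theorem, is that assumption (iii) of the corollary controls $\partial_z w^\xi$ and not merely $\partial_z w^z$: writing $\widehat{\partial_z w^\xi}(\zeta)=i\zeta\,\widehat{w^\xi}(\zeta)$, assumption (iii) gives $\limsup_{\tau\to-\infty}\int_{D^*}\abs{\zeta}\,\norm{\widehat{w^\xi}(\tau,\zeta)}_{L^2(m)}\,d\zeta<\infty$, whence by Chebyshev
\begin{equation*}
\int_{\abs{\zeta}>R}\norm{\widehat{w^\xi}(\tau,\zeta)}_{L^2(m)}\,d\zeta\le\frac{C}{R}
\end{equation*}
uniformly for $\tau$ near $-\infty$. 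This is precisely the $\zeta$-tail compactness of Remark \ref{rmk:Aiii} applied to $w^\xi$, and it supplies equi-tightness in the Pontryagin-dual variable. In the complementary low-frequency band $\abs{\zeta}\le R$, the parabolic smoothing of Proposition \ref{prop5} together with assumption (i) furnishes uniform bounds on $\xi$-derivatives and higher weighted moments of the slices $\widehat{w^\xi}(\tau,\zeta)$ beyond what is available in $L^2(m)$; a Rellich--Kondrachov argument then yields precompactness of these slices in $L^2(m)$. Assembling the $\zeta$-tail bound with the slicewise compactness and the equi-integrability in $\zeta$ (again coming from (i) and (iii)) via the vector-valued Fréchet--Kolmogorov criterion produces the desired strong precompactness in $L^1(D^*;L^2(m))$.

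With precompactness in hand, the weak-to-strong upgrade is a soft argument. Given any sequence $\tau_n\to-\infty$, precompactness extracts a subsequence along which $w^\xi(\tau_{n_k})$ converges strongly in $B_zL^2(m)$ to some limit $g$; strong convergence implies weak convergence, so by uniqueness of weak limits and assumption (ii) of the corollary we must have $g=0$. Hence $\norm{w^\xi(\tau_{n_k})}_{B_zL^2(m)}\to 0$, and since every subsequence of $\{w^\xi(\tau)\}$ admits a further subsequence converging strongly to $0$, the full family satisfies $\norm{w^\xi(\tau)}_{B_zL^2(m)}\to 0$ as $\tau\to-\infty$. This establishes assumption (ii) of Theorem \ref{thm1} and completes the reduction.

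I expect the main obstacle to be the precompactness step, and specifically reconciling the $L^1$ structure in the frequency variable $\zeta$ with the $L^2(m)$ structure in $\xi$: the $\zeta$-tail bound and the slicewise $\xi$-compactness must be combined into genuine norm-precompactness of the $\zeta$-integral in $L^1(D^*;L^2(m))$, which is exactly where assumptions (i) and (iii) (rather than weak convergence alone) are indispensable. The weak-convergence hypothesis of the corollary by itself controls neither the spreading of $w^\xi$ in $\xi$ nor its oscillations in $z$, so both the parabolic-regularity input and the stronger $z$-derivative control are genuinely needed to close the compactness argument.
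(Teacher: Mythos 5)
Your proposal is correct and follows essentially the same route as the paper: parabolic regularity (Lemma \ref{lem7}) gives the uniform $\xi$-regularity, the full $\partial_z w$ bound from assumption (iii) supplies the $\zeta$-tightness needed to run the compactness argument of Lemma \ref{lem8} on $w^\xi$, and weak--strong uniqueness of limits then upgrades assumption (ii) to the strong convergence required by Theorem \ref{thm1}. The only cosmetic difference is that the compact embedding costs an arbitrarily small amount of weight, so the precompactness (and hence the strong convergence) is obtained in $B_zL^2(m')$ for $2<m'<m$ rather than in $B_zL^2(m)$ itself, which is harmless since Theorem \ref{thm1} applies for any exponent greater than $2$.
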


\begin{remark}
Here, we note that we have made slightly stronger assumptions on the $z$-derivative, namely, we assume the full partial derivative $\partial_z w$ is bounded in $B_zL^2(m)$. However, we need only assume that the $\xi$-component of vorticity is weakly converging in this case.
\end{remark}

\begin{remark}
	We believe that Theorem \ref{thm1} and Corollary \ref{cor1} can be extended to provide a uniqueness class for general vortex filaments as studied in \cite{jacob} by applying methods used therein. However, this would require a much more involved analysis to carry out in detail. 
\end{remark}

The next result pertains to the infinite, straight filament in $\R^3$. In this case, we also require localization of the error in $z$. 

\begin{theorem}\label{thm2}
  Suppose $\omega \in C_w([0,T];M^{3/2})$ is a mild solution in the sense of Definition \ref{def:Mild} defined on $\R^2 \times \Real$ with initial data $\alpha \delta_{x=0}$ for some $\alpha \in \R$ and that $w$ is the corresponding self-similar version of $\omega$ defined via (\ref{rescaling}).
Set $w_c^\xi = w^\xi$ and $w_c^z = w^z - \alpha G$.
If for some $m > 2$,
\begin{itemize}
\item[(i)]  $\limsup_{\tau \to -\infty}\|w_c(\tau)\|_{B_zL^2(m)} < \infty$;
\item[(ii)]  $\lim_{\tau\to -\infty} \|w^\xi_c(\tau)\|_{B_zL^2(m)} = 0$;
\item[(iii)]  $\limsup_{\tau \to -\infty} \|\partial_z w^z_c(\tau)\|_{B_zL^2(m)} < \infty$;
\item[(iv)] and $\limsup_{\tau \to -\infty} \|z w^z_c(\tau)\|_{B_zL^2(m)} < \infty$,
\end{itemize}
then $\omega$ is the Oseen vortex defined in (\ref{explicit}), i.e. $\omega = \omega^g$ pointwise in $(0,\infty)\times \R^2\times \R$.
\end{theorem}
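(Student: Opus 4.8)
\section*{Proof proposal for Theorem \ref{thm2}}

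The plan is to show that the hypotheses (i)--(iv) force the \emph{strong} convergence $w(\tau) \to e_z\alpha G$ in $B_z L^2(m)$ as $\tau \to -\infty$, at which point the uniqueness statement of \cite{jacob} quoted above immediately yields $w \equiv e_z\alpha G$, i.e.\ $\omega = \omega^g$. Equivalently, with $w_c = (w^\xi, w^z - \alpha G)$ as in the statement, it suffices to prove $\norm{w_c(\tau)}_{B_z L^2(m)} \to 0$. Since (ii) already gives $\norm{w_c^\xi(\tau)}_{B_z L^2(m)} \to 0$, the entire content is to establish the same decay for $w_c^z$, and I would do so through a compactness/rigidity argument in $\tau \searrow -\infty$ in the spirit of Gallagher and Gallay.

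First I would establish precompactness of $\set{w_c^z(\tau)}_{\tau \le \tau_0}$ in $B_z L^2(m)$. On the Fourier side in $z$ (here $D^* = \R$) the Wiener norm is $\int_{\R}\norm{\widehat{w_c^z}(\tau,\zeta)}_{L^2(m)}\,d\zeta$, and I would apply a Riesz--Kolmogorov criterion in the variable $\zeta$. Assumption (iii), through $\widehat{\partial_z w^z} = i\zeta\,\widehat{w^z}$, bounds $\int_{\R}\abs{\zeta}\,\norm{\widehat{w_c^z}(\tau,\zeta)}_{L^2(m)}\,d\zeta$ and hence gives tightness as $\abs{\zeta}\to\infty$ (this is precisely the compactness isolated in Remark \ref{rmk:Aiii}); assumption (iv), through $\widehat{z\,w_c^z} = i\,\partial_\zeta\widehat{w_c^z}$, controls $\partial_\zeta\widehat{w_c^z}$ in $L^1_\zeta(L^2(m))$ and thereby supplies the equicontinuity of translates in $\zeta$ that prevents concentration of mass near $\zeta = 0$. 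It is exactly this second ingredient that is absent --- and unnecessary --- in the periodic Theorem \ref{thm1}, where the dual group $\Z$ is discrete. Combined with the uniform bound (i) and the parabolic smoothing of Proposition \ref{prop5} to control the time modulus of continuity, this yields compactness of the time-shifts $w_n(\sigma) := w(\tau_n + \sigma)$ in $C_{loc}(\R; B_z L^2(m))$ for any $\tau_n \to -\infty$.

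Next I would identify every subsequential limit $\bar w$. Because the coefficients in \eqref{svorticity} degenerate as $\tau \to -\infty$ --- $\sdz = e^{\tau/2}\partial_z \to 0$ and $\slap \to \Delta_\xi$ --- the limiting dynamics decouple in $z$ and reduce, at each fixed $z$, to the \emph{two-dimensional} self-similar vorticity equation with the planar Biot--Savart law. Assumption (ii) forces $\bar w^\xi \equiv 0$, so the stretching term $(\bar w\cdot\sdv)\bar v = \bar w^z\,\sdz\bar v$ vanishes in the limit, $\bar w = e_z\,\bar w^z$, and $\bar w^z(\cdot, z)$ is a bounded eternal solution in $L^2(m)$ of $\partial_\sigma \bar w^z + (\bar v^\xi\cdot\nabla_\xi)\bar w^z = \L\bar w^z$ for each $z$. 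By the two-dimensional rigidity (Liouville) theorem for bounded eternal solutions in $L^2(m)$ with $m>2$ (\cite{gallaghergallay,gallaghergallaylions,gallaywayne1,gallaywayne2}), this forces $\bar w^z(\xi,z) = \beta(z)\,G(\xi)$. To pin down $\beta \equiv \alpha$ I would use conservation of circulation: a direct computation using $\nabla\cdot\omega = 0$ and $\nabla\cdot u = 0$ shows that $C(t,z) := \int_{\R^2}\omega^z(t,x,z)\,dx$ satisfies the one-dimensional heat equation $\partial_t C = \partial_{zz}C$, the transport and stretching contributions telescoping to $\partial_z\!\int \omega^z u^z\,dx - \partial_z\!\int u^z\omega^z\,dx = 0$. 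Since $\alpha\delta_{x=0}$ carries circulation $\alpha$ at every height, $C(0,\cdot)\equiv\alpha$ and hence $C\equiv\alpha$, i.e.\ $\int_{\R^2} w_c^z(\tau,\xi,z)\,d\xi = 0$ for all $\tau,z$; as $\int_{\R^2}\cdot\,d\xi$ is bounded on $L^2(m)$ for $m>1$, this zero-mass condition passes to the strong limit, giving $\beta(z) - \alpha = \int_{\R^2}\bar w_c^z(\xi,z)\,d\xi = 0$ and so $\bar w_c = 0$. Since every subsequential limit vanishes and the family is precompact, $\norm{w_c(\tau)}_{B_z L^2(m)} \to 0$, which closes the loop via \cite{jacob}.

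I expect the main obstacle to be the compactness step in the infinite-line geometry, specifically controlling the continuous $z$-frequency variable near $\zeta = 0$ so that no circulation escapes or spreads as $\tau \to -\infty$. This is the precise reason the localization hypothesis (iv), absent in the periodic case, is needed: one must upgrade the weak compactness afforded by (i) to genuine $B_z L^2(m)$-compactness, since only strong convergence of $w^z$ (together with continuity of the planar Biot--Savart operator) permits passage to the limit in the nonlinearity $(\bar v^\xi\cdot\nabla_\xi)\bar w^z$. By comparison, the circulation identity is an elementary integration by parts, and the two-dimensional rigidity input is imported directly from the planar theory.
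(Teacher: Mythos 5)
Your overall architecture is the same as the paper's: precompactness of the backward trajectory using (iii) for tightness at large $|\zeta|$ and (iv) for equicontinuity in $\zeta$ (the paper runs this as an Aubin--Lions--Simon argument in the $\zeta$ variable in Lemma \ref{lem19}, which is interchangeable with your Riesz--Kolmogorov phrasing), identification of subsequential limits with the two-dimensional self-similar dynamics, the Gallay--Wayne rigidity of invariant sets, and finally the uniqueness class of \cite{jacob}. Your way of pinning down $\beta(z)\equiv\alpha$ via the circulation identity $\partial_t\int\omega^z\,dx=\partial_{zz}\int\omega^z\,dx$ is a legitimate variant; the paper instead uses $\omega(t)\rightharpoonup^*\alpha\delta_{x=0}$ directly (and records a closely related divergence-free argument, $\sdz\int w^z\,d\xi=-\int\nabla_\xi\cdot w^\xi\,d\xi=0$, in a remark).

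The genuine gap is in your second step, where you assert that ``the limiting dynamics decouple in $z$ and reduce to the two-dimensional vorticity equation'' and that (ii) kills the stretching term. On $\R^2\times\R$ the background $w^g=\alpha G e_z$ is not in $B_zL^2(m)$ (its $z$-Fourier transform is a Dirac mass), so the only usable mild formulation is \eqref{coremild} for the perturbation $w_c$, built on the semigroup $S_\alpha$ generated by the linearization \eqref{Salphadefn} about $\alpha G$. Unlike $S_0$ in Theorem \ref{thm1}, $S_\alpha$ is \emph{not} block diagonal in $(w^\xi,w^z)$: the linear terms $\alpha G\,\sdz v^\xi$, $\alpha\nabla_\xi G\cdot v^\xi$, and $\alpha G\,\sdz v^z$ couple the components at the level of the propagator itself, so assumption (ii) does not by itself control the $\xi$-part of the evolution, and one cannot even run the parabolic-regularity argument componentwise (in particular the decay $\sdv w_c^\xi\to 0$, which you need before the stretching and three-dimensional error terms can be discarded, does not follow as in Theorem \ref{thm1}). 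The paper spends the bulk of Section 5 (Lemmas \ref{lem16} and \ref{lem17}) proving quantitative asymptotics $S_\alpha(\tau+\tau_0,\tau_0)\to(\Gamma_\alpha(\tau),\Tau_\alpha(\tau))$, together with the corresponding gradient and divergence-form versions, precisely to restore the block-diagonal structure in the limit $\tau_0\to-\infty$; these feed into Lemma \ref{lem22} (decay of $\sdv w_c^\xi$) and into the invariance of the $\alpha$-limit set under the flow $\Phi'$ generated by the $\Tau_\alpha$-form of the 2d equation, which is then matched to the full 2d flow via $\Phi(\tau)f=\alpha G+\Phi'(\tau)f$ before rigidity can be invoked. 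Your sketch contains no substitute for this semigroup-asymptotics step, and without it the passage from \eqref{coremild} to the decoupled 2d limit equation is unjustified.
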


\begin{remark}
Note, that since $w^g := (0,0,\alpha G)^t$ is constant in $z$, the Fourier transform in $z$, $\hat{w}^g$, is a delta mass concentrated at frequency $\zeta = 0$ and therefore is not in $L^1_\zeta$. However, we can still say that perturbations of $w^g$ by elements $w_c$ of $B_zL^2(m)$ such that $w_c + w^g$ solves the vorticity equation must actually be the Gaussian $w^g$. Working with the perturbation, $w_c = w - w^g$, instead of $w$ will introduce some difficulties we will explain in Section 5.
\end{remark}

\begin{remark}
Analogous to Remark \ref{rmk:Aiii}, Assumptions (iii) and (iv) are only used to obtain compactness as $\tau \to -\infty$ by providing regularity that is not automatic from parabolic smoothing.
\end{remark}

\begin{remark}
Although Theorem \ref{thm2} is more difficult to prove, both Theorems \ref{thm1} and \ref{thm2} are of intrinsic physical interest as each rule out different types of potential pathological solutions. On one hand, Theorem \ref{thm1} rules out potential pathological solutions that are not localized in $z$, but do retain a discrete translation symmetry of the initial data. On the other hand, Theorem \ref{thm2} rules out potential pathological solutions that are localized in $z$ and break the translation symmetry of the initial data.
\end{remark}

\begin{remark} \label{rmk:JGHGvsJW}
As discussed in \cite{JiaSverak15,GuilllodSverak17}, the linearized stability of a self-similar solution is expected to play a crucial role in determining uniqueness or non-uniqueness of solutions nearby in a suitable sense. This requisite linearized stability was proved in \cite{jacob} for all $\alpha$ and plays a crucial role in the uniqueness result of \cite{jacob}: any solution satisfying $w(\tau) \to e_z \alpha G$ in $B_zL^2(m)$ as $\tau \to -\infty$ necessarily satisfies $w \equiv e_z \alpha G$.
In Theorem \ref{thm1}, the uniqueness result of \cite{jacob} is used essentially as a \emph{black box}. That is, Theorem \ref{thm1} is proved by a compactness/rigidity argument which shows that any solution satisfying Assumptions (i)--(iii) must satisfy $w(\tau) \to e_z \alpha G$ in $B_zL^2(m)$ as $\tau \to -\infty$ (and hence $w \equiv e_z \alpha G$ by \cite{jacob}).
The proof of Theorem \ref{thm2} additionally uses some of the linearized stability results contained in \cite{jacob} (as well as a few variations deduced below).  Finally, we remark that linearized stability of the Oseen vortex (and other column vortices) has also been recently studied in the inviscid case~\cite{MR4010658,gallay2018spectral}.
\end{remark}

\subsection{Conventions}

We make the following standard conventions:

\begin{itemize}
\item We define the Fourier transform as 
\begin{equation}
\hat{f}(\zeta) = \frac{1}{\sqrt{2\pi}}\int_{\R} e^{-i x \zeta} f(x) \ dx.
\end{equation}
\item We take the convention that constants that may change from line to line are denoted $C$ and $a \lesssim_\Lambda b$ if $a \le Cb$ where $C$ is a constant depending only on $\Lambda$.
\item Also, we denote by $|v|$ the standard Euclidean norm of $v\in \R^n$. 
\item We denote for $u,v\in \R^n$, we denote by $u \tens v$, the matrix $(u\tens v)_{ij} = u^iv^j$ and  $(\grad \cdot (u \otimes v))_i = \partial_j(u^iv^j) = (v \cdot \nabla)u^i + u^i(\nabla \cdot v)$.
\end{itemize}




\section{Preliminaries}

\subsection{Analysis in $B_zX$ Spaces}

Recall we defined the Banach spaces $B_zX$ via the norm
\begin{equation}
\|f\|_{B_zX} = \int_{D^*} \|\hat f(\zeta)\|_X \ d\zeta,
\end{equation}
where the Fourier transform is taken only in $z$ and $D$ is either $\R$ or $\T$. We introduce here a few fundamental inequalities and properties of the $B_zX$ spaces that we will use frequently.

\begin{lemma}[H\"older's Inequality (Lemma A.1; \cite{jacob})] \label{lem1}
For each $f,g\in B_zL_x^p, B_zL_x^q$, where $\frac{1}{r} = \frac{1}{p} + \frac{1}{q}$, then  $fg\in B_zL^r_x$ and
\begin{equation}\label{holder}
\|fg\|_{B_zL^r_x} \le \|f\|_{B_zL^p_x} \|g\|_{B_zL^q_x}.
\end{equation}
\end{lemma}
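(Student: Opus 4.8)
The plan is to exploit the defining feature of the $B_z X$ spaces: the norm is an $L^1$-type integral (against Haar measure on $D^*$) of the $X$-norms of the $z$-Fourier coefficients, while taking the $z$-Fourier transform converts the pointwise product $fg$ into a convolution on $D^*$. The whole statement thus reduces to chaining together three elementary inequalities — Minkowski's integral inequality, the classical pointwise-in-$\xi$ H\"older inequality, and Young's convolution inequality on the group $D^*$.

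First I would record the convolution identity. Taking the Fourier transform only in $z$ and using that $\R$ and $\T$ are abelian groups with Pontryagin duals $\R$ and $\Z$, the product $fg$ satisfies $\widehat{fg}(\zeta) = (\hat f \ast \hat g)(\zeta) = \int_{D^*} \hat f(\zeta - \eta)\,\hat g(\eta)\, d\eta$, where the convolution is taken against the Haar measure $d\eta$ on $D^*$ (Lebesgue measure when $D^* = \R$, counting measure when $D^* = \Z$); under the stated Fourier normalization any leftover multiplicative constant is $\le 1$ and may be discarded. For each fixed $\zeta$, the functions $\hat f(\zeta)$ and $\hat g(\zeta)$ are functions of $\xi \in \R^2$ lying in $L^p_x$ and $L^q_x$ respectively.

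Next I would estimate $\norm{\widehat{fg}(\zeta)}_{L^r_x}$ for fixed $\zeta$. By Minkowski's integral inequality applied to the $L^r_x$-valued integral over $\eta$, one has $\norm{\widehat{fg}(\zeta)}_{L^r_x} \le \int_{D^*} \norm{\hat f(\zeta - \eta)\,\hat g(\eta)}_{L^r_x}\, d\eta$. For each fixed pair $(\zeta,\eta)$ the integrand is a product of two functions of $\xi$, so the classical H\"older inequality in the $\xi$-variable with the exponent relation $\tfrac1r = \tfrac1p + \tfrac1q$ gives $\norm{\hat f(\zeta - \eta)\,\hat g(\eta)}_{L^r_x} \le \norm{\hat f(\zeta - \eta)}_{L^p_x}\,\norm{\hat g(\eta)}_{L^q_x}$.

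Finally I would integrate in $\zeta$ and close the estimate. Integrating the previous bound over $\zeta \in D^*$ against Haar measure and applying Fubini's theorem (equivalently Young's convolution inequality with exponents $1 \ast 1 \to 1$) decouples the two factors:
$$\norm{fg}_{B_z L^r_x} = \int_{D^*} \norm{\widehat{fg}(\zeta)}_{L^r_x}\, d\zeta \le \int_{D^*}\!\!\int_{D^*} \norm{\hat f(\zeta - \eta)}_{L^p_x}\,\norm{\hat g(\eta)}_{L^q_x}\, d\eta\, d\zeta = \norm{f}_{B_z L^p_x}\,\norm{g}_{B_z L^q_x},$$
where the last equality uses the translation invariance of the Haar measure in the $\zeta$-integral. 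The only genuine subtlety here is bookkeeping rather than analysis: ensuring the convolution identity and the Haar-measure normalizations are handled consistently across the two cases $D = \R$ and $D = \T$, so that the resulting constant is exactly $1$ (or at worst $\le 1$). Since the argument is a direct Fourier-analytic computation with no PDE input, I do not anticipate any serious obstacle.
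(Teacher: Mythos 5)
Your proof is correct: the paper itself gives no argument for this lemma (it is quoted from Lemma A.1 of \cite{jacob}), and your chain of convolution identity, Minkowski's integral inequality, pointwise H\"older in $\xi$, and Young's inequality $L^1 \ast L^1 \to L^1$ on $D^*$ is exactly the standard argument that the cited source uses. The only point to watch is the one you already flag, namely that the Fourier normalization constant in $\widehat{fg} = c\,\hat f \ast \hat g$ is at most $1$ in both cases $D = \R$ and $D = \T$, so the stated inequality holds as written.
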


\begin{lemma}[Self-Similar Biot-Savart Law (Lemmas A.3 and A.4; \cite{jacob})]\label{lem2}
For $v,w$ related by the self-similar Biot-Savart law, that is, $v = -\sdv\times (\slap)^{-1}w$ which expands as
\begin{equation}\label{biotsavartstructure}
v = \begin{bmatrix} v^\xi\\ v^z\end{bmatrix} = \begin{bmatrix}
\sdz (-\slap)^{-1}(w^{\xi})^\perp - \nabla_\xi^\perp(-\slap)^{-1}w^z\\
\nabla_\xi^\perp \cdot (-\slap)^{-1} w^\xi
\end{bmatrix},
\end{equation}
satisfies
\begin{equation}
\|v\|_{B_zL^4} \lesssim \|w\|_{B_zL^{4/3}},\label{biotsavart}
\end{equation}
and for $1 < p < \infty$,
\begin{equation}
\|\sdv v\|_{B_zL^p} \lesssim \|w\|_{B_zL^p}.\label{riesz}
\end{equation}
\end{lemma}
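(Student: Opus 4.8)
The plan is to reduce both estimates \eqref{biotsavart} and \eqref{riesz} to the corresponding frequency-by-frequency statements in the $z$-variable, and then invoke standard elliptic/singular-integral theory on $\R^2$ for each fixed $z$-frequency $\zeta$. First I would take the Fourier transform of the defining relation $v = -\sdv \times (\slap)^{-1}w$ in the $z$-direction alone. Since the domain is $D = \T$ or $\R$, the operator $\sdz = e^{\tau/2}\partial_z$ becomes multiplication by $i e^{\tau/2}\zeta$, and the operator $\slap = \Delta_\xi + e^\tau \partial_{zz}$ becomes $\Delta_\xi - e^\tau \zeta^2$. Thus for each fixed $\zeta \in D^*$, the Biot-Savart law reduces to a two-dimensional elliptic problem whose symbol structure is governed by the shifted operator $(-\Delta_\xi + e^\tau \zeta^2)^{-1}$, acting on the three scalar components of $\hat w(\zeta)$ as displayed in \eqref{biotsavartstructure}. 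The key observation is that the parameter $e^\tau \zeta^2 \ge 0$ only \emph{improves} the mapping properties of these elliptic operators relative to the pure Laplacian case, since it pushes the symbol away from the origin; hence the operator norms can be bounded uniformly in $\zeta$ (and in $\tau$).

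For the first estimate \eqref{biotsavart}, for each fixed $\zeta$ I would establish the two-dimensional bound $\|\hat v(\zeta)\|_{L^4_\xi} \lesssim \|\hat w(\zeta)\|_{L^{4/3}_\xi}$ with a constant independent of $\zeta$ and $\tau$. The worst term is the Calder\'on-Zygmund/Riesz-potential-type operator $\nabla_\xi^\perp(-\slap)^{-1}$ appearing in the $v^\xi$ component acting on $w^z$; by the Hardy-Littlewood-Sobolev inequality in $\R^2$, the operator $\nabla_\xi(-\Delta_\xi)^{-1}$ maps $L^{4/3} \to L^4$ (the exponents satisfy the scaling relation $\frac{1}{4} = \frac{3}{4} - \frac{1}{2}$ for a gain of one derivative in dimension two), and inserting the nonnegative shift $e^\tau\zeta^2$ only helps. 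The terms $\sdz(-\slap)^{-1}$ carry a factor $e^{\tau/2}\zeta$ which must be absorbed by the additional decay of $(-\slap)^{-1}$; here I would use that $|e^{\tau/2}\zeta|\,(|\xi'|^2 + e^\tau\zeta^2)^{-1}$ is a symbol of the same Riesz-transform type and is uniformly bounded in the relevant mapping, so the same HLS estimate applies. Integrating the resulting uniform two-dimensional bounds $d\zeta$ over $D^*$ and recognizing the definitions of the $B_zL^4$ and $B_zL^{4/3}$ norms yields \eqref{biotsavart}.

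For the second estimate \eqref{riesz}, the operator $\sdv(-\slap)^{-1}\sdv$ (schematically $\nabla_\xi \otimes \nabla$ composed with $(-\slap)^{-1}$) is, at each frequency $\zeta$, a zeroth-order operator in $\xi$ whose symbol is a bounded, smooth, homogeneous-degree-zero multiplier of Calder\'on-Zygmund type. The classical Mikhlin-H\"ormander multiplier theorem (or the $L^p$ boundedness of Riesz transforms) gives $\|\widehat{\sdv v}(\zeta)\|_{L^p_\xi} \lesssim_p \|\hat w(\zeta)\|_{L^p_\xi}$ for $1 < p < \infty$, again with a constant uniform in $\zeta$ and $\tau$ because the shift $e^\tau\zeta^2 \ge 0$ keeps the symbol away from its singularity at the origin and does not degrade the Mikhlin bounds. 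Integrating in $\zeta$ completes the proof.

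The main obstacle I anticipate is verifying that the constants in these two-dimensional elliptic estimates are genuinely \emph{uniform} in both the $z$-frequency $\zeta$ and the time $\tau$ through the coupled parameter $e^\tau\zeta^2$; one must check that the relevant kernels or multipliers, after the shift, still satisfy the HLS and Mikhlin hypotheses with bounds independent of this parameter. Since the parameter enters only as a nonnegative additive mass that strictly improves coercivity of $-\slap$, I expect this uniformity to hold, but it requires a careful scaling argument (rescaling $\xi$ by $(e^\tau \zeta^2)^{-1/2}$ when $\zeta \neq 0$, and handling the $\zeta = 0$ slice, which is the pure two-dimensional Biot-Savart law, directly). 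This is precisely the content referenced from Lemmas A.3 and A.4 of \cite{jacob}, so I would ultimately cite those for the detailed constant tracking.
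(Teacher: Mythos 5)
Your sketch is correct and matches the approach of the cited source: the paper itself gives no proof of Lemma \ref{lem2} (it is imported verbatim from Lemmas A.3 and A.4 of \cite{jacob}), and both that reference and the rest of this paper (cf.\ Lemmas \ref{lem10} and \ref{lem14}) handle these operators exactly as you propose, by Fourier transforming in $z$ and proving two-dimensional elliptic estimates for $(e^\tau|\zeta|^2-\Delta_\xi)^{-1}$ with constants uniform in the parameter $e^\tau|\zeta|^2\ge 0$, then integrating $d\zeta$ against the $B_zX$ norm. Your scaling argument for the uniformity (rescaling $\xi$ by $(e^\tau\zeta^2)^{-1/2}$ and treating $\zeta=0$ as the pure 2d Biot--Savart/Riesz case) is the right way to settle the only delicate point.
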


\begin{lemma}\label{lem3}
The following embeddings hold for all $m > 1$ and $\epsilon > 0\mathrm{:}$
\begin{itemize}
\item $B_zX \embeds C_z(D;X);$
\item $B_zL^2(m) \embeds B_zL^1;$
\item $B_z\dot{H}^1(m) \embeds B_zL^4;$
\item $H^1(m+\epsilon) \embeds L^2(m)$;
\end{itemize}
where, moreover, the final embedding is compact.
\end{lemma}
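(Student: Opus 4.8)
The plan is to treat all four embeddings through the structure of the $B_zX$ norm, which is nothing but the $L^1(D^*;X)$ norm of the partial Fourier transform $\zeta \mapsto \hat{f}(\zeta)$. Consequently, any fiberwise embedding $X_1 \embeds X_2$ with a bound uniform in $\zeta$ integrates up immediately to $B_zX_1 \embeds B_zX_2$, so the first three statements reduce to ordinary inequalities on $\R^2$ applied to $\hat{f}(\zeta)$. For the first embedding I would represent $f$ by the Bochner-valued Fourier inversion $f(z) = \int_{D^*} e^{iz\zeta}\hat{f}(\zeta)\,d\zeta$, which is well defined in $X$ since $\zeta\mapsto\hat{f}(\zeta)\in L^1(D^*;X)$. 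Continuity of $z\mapsto f(z)$ follows from dominated convergence, the integrand being dominated by $\norm{\hat{f}(\zeta)}_X$, and the triangle inequality for the Bochner integral gives $\sup_z\norm{f(z)}_X \le \norm{f}_{B_zX}$, i.e. the embedding with norm at most $1$.

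For the second embedding, the fiberwise claim is $L^2(m)\embeds L^1(\R^2)$ for $m>1$: by Cauchy--Schwarz, $\norm{g}_{L^1} \le \big(\int_{\R^2}\brak{\xi}^{-2m}\,d\xi\big)^{1/2}\norm{g}_{L^2(m)}$, and the first factor is finite precisely because $2m>2$. For the third, the fiberwise claim is $\dot{H}^1(m)\embeds L^4(\R^2)$. Since $\brak{\xi}^m \ge 1$, the homogeneous weighted norm dominates $\norm{\nabla g}_{L^2}$; combined with a weighted Hardy/Poincar\'e inequality $\norm{g}_{L^2}\lesssim_m \norm{\nabla g}_{L^2(m)}$ (valid for $m>1$, where the growth of the weight compensates the loss of low-frequency control intrinsic to a homogeneous space) and the two-dimensional Ladyzhenskaya inequality $\norm{g}_{L^4}^2\lesssim \norm{g}_{L^2}\norm{\nabla g}_{L^2}$, one obtains $\norm{g}_{L^4}^2\lesssim_m \norm{g}_{\dot{H}^1(m)}^2$. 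Integrating each of these in $\zeta$ finishes the second and third embeddings.

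The final embedding is a weighted Rellich--Kondrachov statement. Boundedness is immediate from $\brak{\xi}^m\le\brak{\xi}^{m+\epsilon}$. For compactness, given a bounded sequence $\{g_n\}\subset H^1(m+\epsilon)$, the weight gap controls the tails uniformly: $\int_{\abs{\xi}>R}\brak{\xi}^{2m}\abs{g_n}^2\,d\xi \le R^{-2\epsilon}\norm{g_n}_{L^2(m+\epsilon)}^2 \lesssim R^{-2\epsilon}$, so the $L^2(m)$-mass outside the ball $B_R$ is uniformly small. On each $B_R$ the weight is bounded above and below, so the restriction of $H^1(m+\epsilon)$ to $B_R$ is equivalent to $H^1(B_R)$, and the classical Rellich theorem yields an $L^2(B_R)$-convergent subsequence; a diagonal extraction over $R\to\infty$ together with the uniform tail bound upgrades this to convergence in $L^2(m)$.

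I expect the weighted Hardy/Poincar\'e inequality in the third embedding to be the only genuinely delicate point: it is exactly where the hypothesis $m>1$ and the homogeneity of $\dot{H}^1(m)$ interact, and where the scaling obstruction to an unweighted $\dot{H}^1\embeds L^4$ bound in two dimensions is overcome by the weight. The remaining steps are routine once the fiberwise-to-$B_z$ reduction and the standard weighted Rellich argument are in place.
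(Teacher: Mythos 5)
Your argument is correct in substance, and for the first, second, and fourth embeddings it coincides with the paper's proof: the vector-valued continuity of the Fourier transform of an $L^1$ function (made precise exactly as you do, via the Bochner inversion integral and dominated convergence), Cauchy--Schwarz against $\langle \xi\rangle^{-2m}\in L^1(\R^2)$ for $m>1$, and Rellich--Kondrachov on balls combined with the uniform tail bound supplied by the $\epsilon$ of extra weight, followed by a diagonal extraction. The only real divergence is the third embedding. The paper obtains $\dot{H}^1(m)\hookrightarrow L^4$ fiberwise in one line by composing the Sobolev embedding $\dot{W}^{1,4/3}(\R^2)\hookrightarrow L^4(\R^2)$ with $L^2(m)\hookrightarrow L^{4/3}$ applied to the gradient, i.e. $\|g\|_{L^4}\lesssim\|\nabla_\xi g\|_{L^{4/3}}\lesssim_m\|\nabla_\xi g\|_{L^2(m)}$. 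You instead route through Ladyzhenskaya's interpolation inequality together with a weighted Poincar\'e-type bound $\|g\|_{L^2}\lesssim_m\|\nabla g\|_{L^2(m)}$, which you assert but do not prove. That inequality is in fact true for $m>1$ --- for instance via $\|g\|_{L^2}\lesssim\|\nabla g\|_{L^1}$ (the embedding $\dot{W}^{1,1}(\R^2)\hookrightarrow L^2(\R^2)$) followed by your own second embedding applied to $\nabla g$ --- but notice that the natural proof of it is exactly the same Sobolev-plus-weight mechanism the paper uses directly, so the detour through Ladyzhenskaya buys nothing and leaves the one genuinely delicate point (which you yourself flag) resting on an unproved claim. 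I would either supply the $\dot{W}^{1,1}$ argument for your Poincar\'e step or, more simply, replace the whole item by the paper's two-line chain.
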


\begin{proof}
The first embedding follows from the fact that the Fourier transform of an $L^1$ function is continuous. The second embedding follows from the fact that if $X\embeds Y$, then $B_zX\embeds B_zY$ and $L^2(m) \embeds L^1$ for $m > 1$ by H\"older's inequality. Similarly, the third embedding follows from the Sobolev embedding $\dot{W}^{1,4/3}(\R^2) \embeds L^4(\R^2)$ and the embedding $L^2(m) \embeds L^{4/3}$, 
\begin{equation}
\|f\|_{B_zL^4} \lesssim \|\nabla_\xi f\|_{B_zL^{4/3}} \lesssim_m \|\nabla_\xi f\|_{B_zL^2(m)}.
\end{equation}
Finally, for the compactness of the final embedding, take $\{f_n\}$ an arbitrary sequence in $L^2(m)$ with $\|f_n\|_{H^1(m+\epsilon)} \lesssim 1$. By Rellich-Kondrachov and a diagonalization argument, we may extract a subsequence $f_{n_k}$ and a limit $f$, such that for any compact $K\subset \R^2$, 
$$\lim_{k\rightarrow \infty} \|f - f_{n_k}\|_{L^2(K)} = 0.$$
Since $\|f_{n_k}\|_{L^2(m+\epsilon)} \lesssim 1$, we conclude that $f_{n_k} \rightarrow f$ in $L^2(m)$.
\end{proof}

\begin{lemma}\label{extensionlem}
Let $T:X \rightarrow Y$ be a bounded linear operator between two Banach spaces $X$ and $Y$. Then, $T$ extends to bounded linear map $\tilde{T}:B_zX \rightarrow B_zY$ such that $T$ is given by the extension formula,
\begin{equation}\label{extension}
(\tilde{T}f)(\xi,z) = T(f(\cdot,z))(\xi).
\end{equation}
Moreover, $\|\tilde T\|_{B_zX \rightarrow B_zY} = \|T\|_{X \rightarrow Y}$
\end{lemma}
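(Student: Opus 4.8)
The plan is to pass to the Fourier side in $z$, where $\tilde T$ becomes nothing more than $T$ applied frequency-by-frequency and the operator bound reduces to the definition of $\|T\|_{X \to Y}$. First I would reinterpret elements of $B_zX$ as $X$-valued functions of the variable $z$: writing $f_z := f(\cdot,z)$, membership $f \in B_zX$ means precisely that the $z$-Fourier transform $\hat f$ belongs to $L^1(D^*;X)$, with $\|f\|_{B_zX} = \|\hat f\|_{L^1(D^*;X)}$. By the first embedding of Lemma~\ref{lem3}, $B_zX \embeds C_z(D;X)$, every such $f$ has a continuous $X$-valued representative $z \mapsto f_z$, so the pointwise prescription $(\tilde T f)(\cdot,z) := T(f_z)$ is unambiguous and yields a continuous $Y$-valued function of $z$; this is exactly the extension formula \eqref{extension}.

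The crux is the commutation identity
\[
\widehat{\tilde T f}(\zeta) = T\!\left(\hat f(\zeta)\right), \qquad \zeta \in D^*.
\]
Since $\hat f \in L^1(D^*;X)$, the $z$-Fourier transform is a genuine $X$-valued Bochner integral, $\hat f(\zeta) = \frac{1}{\sqrt{2\pi}}\int_D e^{-iz\zeta} f_z \, dz$, and because $T$ is a bounded linear operator it commutes with Bochner integrals; pulling $T$ out of the defining integral for $\widehat{\tilde T f}(\zeta)$ yields the identity above. The composition $\zeta \mapsto T(\hat f(\zeta))$ is then strongly measurable and integrable, so $\tilde T f \in B_zY$, and the upper bound is immediate:
\[
\|\tilde T f\|_{B_zY} = \int_{D^*} \|T(\hat f(\zeta))\|_Y \, d\zeta \le \|T\|_{X\to Y} \int_{D^*} \|\hat f(\zeta)\|_X \, d\zeta = \|T\|_{X\to Y}\, \|f\|_{B_zX},
\]
which shows $\|\tilde T\|_{B_zX \to B_zY} \le \|T\|_{X\to Y}$.

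For the matching lower bound (hence equality), I would test on tensor products $f(\xi,z) = g(\xi)\phi(z)$, where $g \in X$ is arbitrary and $\phi$ is a fixed function on $D$ with $\hat\phi \in L^1(D^*)\setminus\{0\}$. Here the $z$-transform acts only on $\phi$, so $\hat f(\zeta) = \hat\phi(\zeta)\, g$ and $\widehat{\tilde T f}(\zeta) = \hat\phi(\zeta)\, Tg$; the common factor $\int_{D^*} |\hat\phi(\zeta)|\, d\zeta$ cancels in the quotient, leaving $\|\tilde T f\|_{B_zY}/\|f\|_{B_zX} = \|Tg\|_Y/\|g\|_X$. Taking the supremum over $g$ recovers $\|T\|_{X\to Y}$ exactly.

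The only genuine point requiring care, and the step I would write out in detail, is the identification of $B_zX$ with $L^1(D^*;X)$ under the $z$-Fourier transform together with the fact that a bounded linear operator passes through the resulting Bochner integral; the continuous-representative statement $B_zX \embeds C_z(D;X)$ is what makes both the pointwise definition and this interchange rigorous. Everything past the commutation identity is a one-line estimate.
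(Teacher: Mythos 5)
Your proof is correct in substance but takes a genuinely different route from the paper's. The paper argues by density: it checks the bound $\|\tilde T\|_{B_zX\to B_zY}\le\|T\|_{X\to Y}$ on finite trigonometric sums (for $D=\T$) or on functions whose $z$-Fourier transform is an $X$-valued simple function (for $D=\R$), and then extends by continuity; you instead work directly on the Fourier side, identifying $B_zX$ isometrically with $L^1(D^*;X)$ and using that a bounded operator commutes with Bochner integrals to obtain the commutation identity $\widehat{\tilde T f}=T\circ\hat f$, from which the upper bound is a one-line estimate. Your route is arguably cleaner and dispenses with the density step; the paper's route avoids any discussion of vector-valued integration. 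The lower-bound arguments are essentially the same (testing on tensor products $g(\xi)\phi(z)$; the paper's $\frac12 s_je^{iz}+\frac12 s_je^{-iz}$ and its Schwartz $\varphi$ are particular choices of $\phi$), and yours is slightly tidier since the $\phi$-factor cancels exactly rather than only along a maximizing sequence. One technical point to repair when writing this out: for $D=\R$ the formula $\hat f(\zeta)=\frac{1}{\sqrt{2\pi}}\int_{\R}e^{-iz\zeta}f_z\,dz$ is \emph{not} in general a convergent Bochner integral, since $f\in B_zX$ only gives $z\mapsto f_z$ bounded and continuous, not integrable in $z$. Run the commutation argument through the \emph{inverse} transform instead: $\hat f\in L^1(D^*;X)$ makes $f_z=\frac{1}{\sqrt{2\pi}}\int_{D^*}e^{iz\zeta}\hat f(\zeta)\,d\zeta$ an honest Bochner integral, so $T(f_z)=\frac{1}{\sqrt{2\pi}}\int_{D^*}e^{iz\zeta}T(\hat f(\zeta))\,d\zeta$, which identifies $\tilde Tf$ as the inverse transform of $T\circ\hat f$ and yields the same conclusion.
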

\begin{proof}
Consider first the case of $D = \T$. The set of all functions
$$\mathcal{A}_N = \set{f = \sum_{\abs{k} < N} s_k e^{ikz} : s_k \in X}$$
is such that $\cup_N \mathcal{A}_N$  is dense in $B_zX$ and by linearity we have $\tilde T(f) = \sum_{\abs{k} < N} Ts_k e^{ikz}$ for all $f \in \mathcal{A}_N$.
Then, note that
\begin{align*}
\norm{\tilde{T} f}_{B_z Y} = \sum_{\abs{k} < N} \norm{T s_k}_{B_z Y} \leq \norm{T}_{X \to Y} \norm{f}_{B_z X},
\end{align*}
and so $\tilde{T}$ is a bounded linear operator on the dense set $\cup_N \mathcal{A}_N$, and so extends to a bounded linear operator $B_z X \to B_z Y$ by density with $\norm{\tilde{T}}_{B_z X \to B_z Y} \leq \norm{T}_{X \to Y}$. To see $\norm{\tilde{T}}_{B_z X \to B_z Y} \geq \norm{T}_{X \to Y}$ consider a sequence $\set{s_j}_{j=1}^\infty \in X$ with $\norm{s_j}_{X} = 1$ such that $\norm{Ts_j}_{B_z X} \to \norm{T}_{X \to Y}$ and consider the sequence $f_j = \frac{1}{2}s_j e^{iz} + \frac{1}{2}s_j e^{-iz}$.

Now consider the case $D = \R$. In this case, we use the dense set
\begin{align*}
\mathcal{A}_N = \set{f \in B_z X : \hat{f} = \sum_{1 \leq j \leq N} s_j \mathbf{1}_{A_j}(z), \quad A_j \in \mathcal{B}(\Real)}.  
\end{align*}
By density of simple functions in $L^1$, $\cup_N \mathcal{A}_N$ is dense in $B_z X$. The proof that $\norm{\tilde{T}}_{B_z X \to B_z Y} \leq \norm{T}_{X \to Y}$ proceeds from here in essentially the same manner as the case of $D = \T$.
To see  $\norm{\tilde{T}}_{B_z X \to B_z Y} \geq \norm{T}_{X \to Y}$ let $\set{s_j}_{j=1}^\infty \in X$ with $\norm{s_j}_{X} = 1$ such that $\norm{Ts_j}_{B_z X} \to \norm{T}_{X \to Y}$, $\varphi(\zeta)$ be a non-negative, Schwartz-class function, and the sequence of functions $f_j = s_j \int \varphi(\zeta) e^{i\zeta z} d\zeta$.   
\end{proof}

\begin{lemma}\label{uniformlem}
Let $X$ and $Y$ be Banach spaces, $A$ be a bounded subset of $\R$, $s_0 \in \R$, and $\{T(t,s)\}_{t \in A, s<s_0}$ a set of bounded linear operators $T(t,s): X \rightarrow Y$ satisfying
\begin{equation}
\|T(t,s)\|_{X\rightarrow Y} \le M < \infty \qquad\text{and}\qquad \lim_{s\rightarrow -\infty} \sup_{t\in A} \|T(t,s)f\|_Y = 0
\end{equation}
for any $t\in A$ and $f\in D$ for $D$ a dense subset of $X$. Then, for any $K \subset X$ compact,
\begin{equation}\label{conv0}
\lim_{s\rightarrow -\infty} \sup_{t \in A, f\in K}\|T(t,s)f\|_Y = 0.
\end{equation}
\end{lemma}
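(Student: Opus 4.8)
The plan is to run a standard equicontinuity / $\epsilon$-net argument, using the uniform operator bound $M$ to transfer the decay hypothesis from the dense set $D$ to the compact set $K$, and using compactness of $K$ together with density of $D$ to reduce matters to finitely many ``test vectors'' all lying inside $D$. The only point requiring care is that the decay hypothesis is assumed to hold only on $D$ rather than on all of $X$; this is resolved by arranging that the centers of the covering net belong to $D$.

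Fix $\epsilon > 0$, and assume $M > 0$ (the case $M = 0$ being trivial). First I would set $\delta = \epsilon/(3M)$. Since $K$ is a compact subset of the metric space $X$, it is totally bounded, so there exist finitely many points $f_1, \dots, f_N \in K$ whose balls of radius $\delta/2$ cover $K$. Because $D$ is dense in $X$, I may choose $g_i \in D$ with $\norm{f_i - g_i}_X < \delta/2$; by the triangle inequality the balls of radius $\delta$ centered at the $g_i$ still cover $K$, i.e.\ every $f \in K$ satisfies $\norm{f - g_i}_X < \delta$ for some $i$. This is the step that circumvents the subtlety noted above, since the $g_i$ now lie in $D$.

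Next, for arbitrary $f \in K$, $t \in A$, and $s < s_0$, I would pick such an index $i$ and split, using linearity and the uniform bound $\norm{T(t,s)}_{X \to Y} \le M$,
\[
\norm{T(t,s) f}_Y \le \norm{T(t,s)(f - g_i)}_Y + \norm{T(t,s) g_i}_Y \le M\delta + \norm{T(t,s) g_i}_Y = \tfrac{\epsilon}{3} + \norm{T(t,s) g_i}_Y.
\]
The first term is uniformly small by the choice of $\delta$, and the second is controlled by the decay hypothesis on $D$. Since there are only finitely many vectors $g_1, \dots, g_N \in D$, each satisfying $\lim_{s \to -\infty} \sup_{t \in A} \norm{T(t,s) g_i}_Y = 0$, I can select a single threshold $S < s_0$ with $\sup_{t \in A} \norm{T(t,s) g_i}_Y < \epsilon/3$ for all $i$ and all $s < S$. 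Combining, for every $s < S$, every $t \in A$, and every $f \in K$ we obtain $\norm{T(t,s) f}_Y < \tfrac{2\epsilon}{3} < \epsilon$, whence $\sup_{t \in A,\, f \in K} \norm{T(t,s) f}_Y \le \epsilon$. As $\epsilon > 0$ was arbitrary, this is precisely \eqref{conv0}. I do not anticipate a genuine obstacle: the finiteness of the net (from compactness of $K$) is exactly what upgrades the pointwise decay to uniform decay, while the uniform bound $M$ together with density of $D$ is what makes a net with centers in $D$ sufficient.
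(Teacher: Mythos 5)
Your proof is correct and follows essentially the same $\epsilon$-net argument as the paper; the only organizational difference is that you place the net centers in $D$ directly, whereas the paper first uses density and the uniform bound $M$ to extend the decay hypothesis to all of $X$ and then takes net centers in $K$. Both routes are equivalent three-epsilon arguments.
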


\begin{proof}
First, by density of $D\subset X$ and uniform boundedness of the family $\{T(t,s)\}$, certainly
\begin{equation}
\lim_{s\rightarrow -\infty} \|T(t,s)f\|_Y = 0
\end{equation}
for any $f \in X$ and $t\in A$.

Second, fix $K \subset X$ compact and $\epsilon > 0$. Then, $K$ is totally bounded and covered by $N_\epsilon$ balls of radius $\epsilon$, say $\{B_{\epsilon}(f_i)\}_{i=1}^{N_\epsilon}$. 

Now, pick $S(\epsilon) < s_0$ such that for each $1 \le i \le N_\epsilon$, $\|T(t,s)f_i\|_Y < \epsilon$ for $s < S$ and $t \in A$.
Thus, for any $s < S$ and any $f\in K$,
\begin{equation}
\|T(t,s)f\|_Y \le \|T(t,s)f_i\|_Y + \|T(t,s)(f - f_i)\| \le \epsilon + M\epsilon.
\end{equation}
Therefore, taking a supremum over $f\in K$ and $t\in A$, yields (\ref{conv0}).
\end{proof}

\subsection{Linearized Evolution Operators}

Since we wish to work with mild solutions within the self-similar coordinate system, we need to isolate an appropriate part of the equations to treat perturbatively. In this subsection, we describe and estimate various linear evolution operators, which determine the terms we are treating perturbatively. For notational convenience, we first name the various solution operators we will consider.
\begin{itemize}
\item The one parameter semigroup $e^{\tau\L}:B_zL^2(m) \rightarrow B_zL^2(m)$ denotes the solution map to the scalar linear evolution equation on $\R^2 \times \T$,
\begin{equation}
\partial_\tau f = \L f = (\Delta_\xi + \frac{1}{2}\xi \cdot \nabla_\xi + 1)f.
\end{equation}
\item The two parameter semigroup $S_0(\tau,\tau^\prime):B_zL^2(m)\rightarrow B_zL^2(m)$ denotes the solution map to the scalar linearized evolution equation on $\R^2 \times \T$,
\begin{equation}\label{S_0defn}
\partial_\tau f = \L f + e^\tau \partial_{zz}f = (\L + \sdz^2) f.
\end{equation}
\item The one parameter semigroup $\Gamma_\alpha(\tau):B_zL^2(m)^2 \rightarrow B_zL^2(m)^2$ denotes the solution map to the vector-valued linearized evolution equation on $\R^2\times \R$,
\begin{equation}
\partial_\tau f^\xi + \alpha (v^g \cdot \nabla_\xi)f^\xi - \alpha(f^\xi \cdot \nabla_\xi)v^g = \L f^\xi
\end{equation}
\item The one parameter semigroup $\Tau_\alpha(\tau):B_zL^2(m) \rightarrow B_zL^2(m)$ denotes the solution map to the scalar linearized evolution equation on $\R^2\times \R$,
\begin{equation}
\partial_\tau f^z + \alpha v^g \cdot \nabla_\xi f^z + \nabla^\perp_\xi \Delta_\xi^{-1}f^z \cdot \alpha\nabla_\xi G = \L f^z
\end{equation}
\item The two parameter semigroup $S_\alpha(\tau,\tau^\prime):B_zL^2(m)^3 \rightarrow B_zL^2(m)^3$ denotes the solution map to the vector-valued linearized evolution equation on $\R^2 \times \R$,
\begin{equation}
\begin{aligned}\label{Salphadefn}
\partial_\tau f^\xi + \alpha (v^g \cdot \nabla_\xi)f^\xi - \alpha(f^\xi \cdot \nabla_\xi)v^g - \alpha G \sdz v^\xi &= (\L + \sdz^2)f^\xi\\
\partial_\tau f^z + \alpha v^g \cdot\nabla_\xi f^z + \alpha \nabla_\xi G \cdot v^\xi - \alpha G\sdz v^z  &= (\L + \sdz^2)f^z\\
v^\xi =  \sdz (-\slap)^{-1}(f^{\xi})^\perp -& \nabla_\xi^\perp(-\slap)^{-1}f^z\\
v^z = \nabla_\xi^\perp &\cdot (-\slap)^{-1} f^\xi.
\end{aligned}
\end{equation}
\end{itemize}
Now, in Propositions \ref{prop1} and \ref{prop2}, we claim that each of the above solution maps is well defined, and satisfies regularization estimates similar to that of the heat semigroup. In the following, $$a(\tau) = (1 - e^{-\tau})^{-1}\sim \tau^{-1}$$ is the analogue of time in the $\tau \rightarrow 0$ asymptotic regime.

\begin{proposition}[From \cite{gallaywayne1,gallaywayne2,gallaghergallay}]\label{prop1}
Fix $m > 2$. Each of the semigroups $e^{\tau\L}$, $\Gamma_\alpha(\tau)$, and $\Tau_\alpha(\tau)$ are well-defined and strongly continuous on $B_zL^2(m)$ or $B_zL^2(m)^2$. Moreover, for each $\gamma > 0$, $1\le q \le p \le 2$, $f\in B_zL^q(m)$ and $0 < \tau < \infty$, the following regularization estimates hold:
\begin{equation}
\|Q(\tau)f\|_{B_zL^p(m)} \lesssim_{\alpha,m,p,q,\gamma} \frac{e^{\gamma \tau}}{a(\tau)^{\frac{1}{q} - \frac{1}{p}}}\|f\|_{B_zL^q(m)}
\end{equation}
where $Q$ is one of the above semigroups. Furthermore, we have the following gradient estimates:
\begin{equation}
\|\nabla_\xi Q_{sc}(\tau)f\|_{B_zL^p(m)} + e^{\tau/2}\|Q_{sc}(\tau)\nabla_\xi f\|_{B_zL^p(m)} \lesssim_{\alpha,m,p,q,\gamma} \frac{e^{\gamma\tau}}{a(\tau)^{\frac{1}{q} - \frac{1}{p} + \frac{1}{2}}}\|f\|_{B_zL^q(m)}
\end{equation}
where $Q_{sc}$ is one of the scalar semigroups, i.e. $e^{\tau\L}$ or $\Tau_\alpha(\tau)$, and otherwise,
\begin{equation}
\|\nabla_\xi \Gamma_\alpha(\tau)f\|_{B_zL^p(m)} + e^{\tau/2}\|\Gamma_\alpha(\tau)\div_\xi f\|_{B_zL^p(m)} \lesssim_{\alpha,m,p,q,\gamma} \frac{e^{\gamma\tau}}{a(\tau)^{\frac{1}{q} - \frac{1}{p} + \frac{1}{2}}}\|f\|_{B_zL^q(m)}.
\end{equation}
\end{proposition}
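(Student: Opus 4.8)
The plan is to reduce every estimate to the corresponding two-dimensional estimate via the extension Lemma \ref{extensionlem}. The crucial structural observation is that none of the three evolution equations defining $e^{\tau\L}$, $\Gamma_\alpha(\tau)$, and $\Tau_\alpha(\tau)$ contains any $z$-derivative: every spatial derivative appearing is $\nabla_\xi$ or $\Delta_\xi$ (note in particular that the Biot--Savart term in the $\Tau_\alpha$ equation uses the \emph{two-dimensional} inverse Laplacian $\Delta_\xi^{-1}$, not $\slap^{-1}$), and every variable coefficient ($v^g$, $G$, $\nabla_\xi G$) is a function of $\xi$ alone. Consequently, taking the Fourier transform in $z$ decouples each equation into a family of \emph{identical} $2$d evolution equations, one for each frequency $\zeta \in D^*$, with no dependence on $\zeta$. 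In other words, writing $Q_{\mathrm{2d}}(\tau)$ for the corresponding $2$d solution operator on $L^2(m)$ (resp.\ $L^2(m)^2$) --- namely the $2$d Fokker--Planck semigroup, or the $2$d linearizations of Navier--Stokes about the Oseen vortex --- we have the identity $\widehat{Q(\tau)f}(\zeta) = Q_{\mathrm{2d}}(\tau)\hat f(\zeta)$, so that $Q(\tau)$ is exactly the $B_z$-extension of $Q_{\mathrm{2d}}(\tau)$ in the sense of Lemma \ref{extensionlem}.

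Given this reduction, I would invoke the known two-dimensional theory. The operators $Q_{\mathrm{2d}}(\tau)$ are precisely those studied by Gallay--Wayne \cite{gallaywayne1,gallaywayne2} and Gallagher--Gallay \cite{gallaghergallay}, who establish both the regularization estimates and the gradient estimates on the weighted spaces $L^2(m)$ for $m > 2$; these follow from the spectral gap of $\L$ and of the linearized operators together with standard parabolic smoothing. The only thing one must check before citing is that the definitions of $\Gamma_\alpha$ and $\Tau_\alpha$ given above match verbatim the vector- and scalar-component linearizations treated in those works, and that the stated $e^{\gamma\tau}$ and $e^{\tau/2}$ weights are exactly those produced by their estimates.

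To conclude, I would transfer each $2$d bound to the $B_z$ setting by applying Lemma \ref{extensionlem} at each fixed $\tau$. Taking $T = Q_{\mathrm{2d}}(\tau):L^q(m)\to L^p(m)$ gives $\|Q(\tau)\|_{B_zL^q(m)\to B_zL^p(m)} = \|Q_{\mathrm{2d}}(\tau)\|_{L^q(m)\to L^p(m)}$, which is controlled by the right-hand side of the regularization estimate (the case $p=q=2$ simultaneously yields well-definedness on $B_zL^2(m)$). For the gradient estimates I would use that $\nabla_\xi$ acts only in $\xi$ and hence commutes with the $z$-Fourier transform, so that $\nabla_\xi Q(\tau)$ and $Q(\tau)\nabla_\xi$ are themselves the $B_z$-extensions of $\nabla_\xi Q_{\mathrm{2d}}(\tau)$ and $Q_{\mathrm{2d}}(\tau)\nabla_\xi$; the extension lemma then transfers those operator norms with equality as well. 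Finally, strong continuity on $B_zL^2(m)$ follows from strong continuity of $Q_{\mathrm{2d}}(\tau)$ on $L^2(m)$: for $f\in B_zL^2(m)$,
$$\|Q(\tau)f - f\|_{B_zL^2(m)} = \int_{D^*}\|Q_{\mathrm{2d}}(\tau)\hat f(\zeta) - \hat f(\zeta)\|_{L^2(m)}\,d\zeta \longrightarrow 0 \quad \text{as } \tau \searrow 0,$$
by dominated convergence, the integrand being dominated by $(M+1)\|\hat f(\zeta)\|_{L^2(m)}$ with $M = \sup_{\tau\in(0,1]}\|Q_{\mathrm{2d}}(\tau)\|_{L^2(m)\to L^2(m)} < \infty$.

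The work here is essentially bookkeeping, and the only genuine content is the structural observation that these semigroups are $z$-frequency independent. This is exactly what makes the clean extension possible, and it \emph{fails} for the operators $S_0$ and $S_\alpha$ (to be treated separately) precisely because those do contain the term $\sdz^2 = e^\tau\partial_{zz}$, which in Fourier becomes $-e^\tau\zeta^2$ and couples the estimate to $\zeta$. Thus the main point to get right is verifying the absence of $z$-derivatives and the matching of $\Gamma_\alpha$, $\Tau_\alpha$ to the exact $2$d operators in the cited references; once that is in place, no new analysis beyond the two-dimensional estimates is required.
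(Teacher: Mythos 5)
Your proposal is correct and follows essentially the same route as the paper: cite the two-dimensional estimates from Gallay--Wayne and Gallagher--Gallay (the paper additionally points to Appendix B of \cite{jacob} for the construction of $\Gamma_\alpha$) and transfer them to the $B_z$ setting via Lemma \ref{extensionlem}, using that these semigroups contain no $z$-derivatives. Your explicit remark on why this decoupling fails for $S_0$ and $S_\alpha$ matches the paper's subsequent treatment of those operators.
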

\begin{proof}
See \cite{gallaywayne1} for a discussion of $e^{\tau\L}$ on $L^2(m) \subset L^1(\R^2)$ spaces, \cite{gallaywayne2} for the construction of $\Tau_\alpha$ on $L^2(m) \subset L^1(\R^2)$ spaces, and \cite{gallaghergallay} for the gradient estimates on $\Tau_\alpha$. See Appendix B of \cite{jacob} for a construction of $\Gamma_\alpha$ and estimates. The corresponding $B_zL^r(m)$ estimates then follow using Lemma \ref{extensionlem}.
\end{proof}

\begin{proposition}[Essentially from \cite{jacob}]\label{prop2}
Fix $m > 2$. Both of the two-parameter semigroups $S_0(\tau,\tau_0)$ and $S_\alpha(\tau,\tau_0)$ are well-defined and strongly continuous on $B_zL^2(m)$ or $B_zL^2(m)^3$. Moreover, for each $\gamma > 0$, $1 < q \le p \le 2$, and $-\infty < \tau_0 < \tau$, the following regularization estimates hold:
\begin{equation}\label{regularization}
\|Q(\tau,\tau_0)f\|_{B_zL^p(m)} \lesssim_{\alpha,m,p,q,\gamma} \frac{e^{\gamma(\tau - \tau_0)}}{a(\tau - \tau_0)^{\frac{1}{q} - \frac{1}{p}}}\|f\|_{B_zL^q(m)}.
\end{equation}
where $Q$ is $S_0$ or $S_\alpha$.
Furthermore, we have the following gradient estimates for $S_0$: for $f\in B_zL^q(m)$ and $g\in B_zL^q(m)^2$ a vector field,
\begin{equation}\label{scalar1}
\|\sdv_\tau S_0(\tau,\tau_0)f\|_{B_zL^p(m)} \lesssim_{\alpha,m,p,q,\gamma} \frac{e^{\gamma(\tau-\tau_0)}}{a(\tau - \tau_0)^{\frac{1}{q} - \frac{1}{p} + \frac{1}{2}}}\|f\|_{B_zL^q(m)}.
\end{equation}
and
\begin{equation}\label{scalar2}
e^{(\tau-\tau_0)/2}\|S_0(\tau,\tau_0)\div_{\xi}g\|_{B_zL^p(m)} \lesssim_{\alpha,m,p,q,\gamma} \frac{e^{\gamma(\tau-\tau_0)}}{a(\tau - \tau_0)^{\frac{1}{q} - \frac{1}{p} + \frac{1}{2}}}\|g\|_{B_zL^q(m)}
\end{equation}

We also have the following gradient estimates for $S_\alpha$: there exists $\mu = \mu(\alpha)\in (0,1/2)$ for $h\in B_zL^q(m)^3$ a vector field and $F\in B_zL^q(m)^9$ a $3\times 3$ matrix satisfying $\divz\divz F = 0$,
\begin{equation}\label{vector1}
\|\sdv_\tau S_\alpha(\tau,\tau_0)h\|_{B_zL^p(m)} \lesssim_{\alpha,m,p,q} \frac{e^{\mu(\tau -\tau_0)}}{a(\tau - \tau_0)^{\frac{1}{q} - \frac{1}{p} + \frac{1}{2}}}\|h\|_{B_zL^q(m)},
\end{equation}
\begin{equation}\label{vector2}
e^{(\tau - \tau_0)/2}\|S_\alpha(\tau,\tau_0)\divz_{\tau_0}F\|_{B_zL^2(m)} \lesssim_{\alpha,m,q} \frac{e^{\mu(\tau -\tau_0)}}{a(\tau - \tau_0)^{\frac{1}{q}}}\|F\|_{B_zL^q(m)},
\end{equation}
and
\begin{equation}\label{vector3}
e^{(\tau - \tau_0)/2}\|\sdv_\tau S_\alpha(\tau,\tau_0)\divz_{\tau_0}F\|_{B_zL^2(m)}
 \lesssim_{\alpha,m,q} \frac{e^{\mu(\tau -\tau_0)}}{a(\tau - \tau_0)^{\frac{1}{q} + \frac{1}{2}}}\|F\|_{B_zL^q(m)}.
\end{equation}
\end{proposition}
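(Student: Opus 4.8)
The plan is to diagonalize the $z$-dependence by taking the Fourier transform in $z$, reducing both semigroups to $\zeta$-parametrized families of two-dimensional evolutions on $L^2(m)$, and then to recombine using the Wiener-algebra structure of $B_zL^2(m)$ via Lemma \ref{extensionlem}. Writing $\hat f(\tau,\xi,\zeta)$ for the transform, the operators become multipliers: $\sdz \mapsto i\zeta e^{\tau/2}$ and $\slap \mapsto \Delta_\xi - \zeta^2 e^\tau$, so that $(-\slap)^{-1}\mapsto(-\Delta_\xi + \zeta^2 e^\tau)^{-1}$. At $\zeta = 0$ every fiber collapses onto the genuinely two-dimensional semigroups of Proposition \ref{prop1}, and the strategy is to build the full fibers perturbatively off of these.

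For $S_0$ this is exact. Since $\L$ commutes with multiplication by the $\xi$-independent scalar $\zeta^2 e^\tau$, the fiber equation \eqref{S_0defn} integrates to $\widehat{S_0(\tau,\tau_0)f}(\zeta) = e^{-\zeta^2(e^\tau - e^{\tau_0})}\,e^{(\tau-\tau_0)\L}\hat f(\zeta)$. The Gaussian factor is bounded by $1$, so the regularization estimate \eqref{regularization}, the $\nabla_\xi$ part of \eqref{scalar1}, and \eqref{scalar2} follow immediately from the corresponding bounds for $e^{\tau\L}$ in Proposition \ref{prop1}, integrated in $\zeta$. For the $\sdz$ part of \eqref{scalar1} one uses the elementary multiplier bound $\abs{\zeta}e^{\tau/2}e^{-\zeta^2(e^\tau - e^{\tau_0})}\lesssim (1 - e^{-(\tau-\tau_0)})^{-1/2} = a(\tau-\tau_0)^{1/2}$, which supplies exactly the extra half-power of $a$ charged to one derivative; strong continuity is inherited from that of $e^{\tau\L}$ together with continuity of the damping factor.

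The work is in $S_\alpha$. At $\zeta = 0$ the system \eqref{Salphadefn} block-diagonalizes into the scalar and vector 2d linearizations whose solution operators are $\Tau_\alpha$ and $\Gamma_\alpha$, because the coupling terms $\alpha G\sdz v$ and the off-diagonal Biot-Savart pieces all carry an explicit factor of $\sdz = i\zeta e^{\tau/2}$. I would therefore take as unperturbed flow $P(\tau,\tau_0) = \left(\Gamma_\alpha(\tau-\tau_0)\oplus\Tau_\alpha(\tau-\tau_0)\right)e^{-\zeta^2(e^\tau-e^{\tau_0})}$, incorporating the good $-\zeta^2 e^\tau$ damping as in $S_0$, and solve \eqref{Salphadefn} fiberwise by the Duhamel series $\hat f(\tau) = P(\tau,\tau_0)\hat f(\tau_0) + \int_{\tau_0}^\tau P(\tau,s)R(s)\hat f(s)\,ds$, where $R(s)$ collects the $\zeta$-dependent coupling and the resolvent difference $(-\Delta_\xi + \zeta^2 e^s)^{-1} - (-\Delta_\xi)^{-1}$. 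Each term of $R$ carries at least one factor $\zeta e^{s/2}$, so the $\zeta$-integral defining the $B_z$-norm is controlled by the Gaussian damping in $P$ exactly as in the $S_0$ multiplier bound, while the Biot-Savart and Riesz bounds \eqref{biotsavart}--\eqref{riesz} of Lemma \ref{lem2} together with the smoothing of Proposition \ref{prop1} keep the $s\nearrow\tau$ singularity integrable and close the series. The estimates \eqref{vector1}--\eqref{vector3} then follow by placing the derivatives on $P$ (using the gradient bounds of Proposition \ref{prop1} and the $\sdz$-multiplier bound) and, for \eqref{vector2}--\eqref{vector3}, integrating the outer $\divz_{\tau_0}$ by parts against $P$; the hypothesis $\divz\divz F = 0$ is what lets a full divergence be absorbed at the reduced cost recorded there.

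The main obstacle is the bookkeeping of the $e^{\tau/2}$ growth generated by the $\sdz$-factors in $R$: unlike the scalar $S_0$ case these factors appear inside the Duhamel iteration and inside the Biot-Savart law, and each one threatens to produce genuine exponential growth. The point to be verified is that, because every perturbative term carries exactly one such factor while the $-\zeta^2 e^s$ damping absorbs any higher powers through the Gaussian, the series sums to an operator whose net growth rate is some $\mu(\alpha) < 1/2$ rather than accumulating. Establishing this, while simultaneously keeping the temporal singularity $a(\tau-s)^{\cdots}$ integrable, controlling the $\zeta$-integral, and respecting the double-divergence-free structure required for \eqref{vector2}--\eqref{vector3}, is the delicate part: it is precisely where the arbitrarily-slow-growth bounds $e^{\gamma(\tau-\tau_0)}$ ($\gamma>0$ any) of $\Gamma_\alpha$ and $\Tau_\alpha$ from Proposition \ref{prop1} must be traded against the single $e^{\tau/2}$ loss to land at $\mu(\alpha)<1/2$. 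The remaining estimates then transfer from the fibers to $B_zL^2(m)$ verbatim through Lemma \ref{extensionlem}.
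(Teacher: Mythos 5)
Your proposal follows essentially the same route as the paper for the core estimates: the paper likewise diagonalizes in $z$, obtains the exact fiber formula $\widehat{S_0(\tau,\tau_0)F} = c\,e^{-(e^\tau - e^{\tau_0})\zeta^2}e^{(\tau-\tau_0)\L}\widehat{F}$ and discards the Gaussian to reduce \eqref{regularization}, \eqref{scalar1}, \eqref{scalar2} to Proposition \ref{prop1}, and treats $S_\alpha$ fiberwise as a Duhamel perturbation of $e^{-|\zeta|^2(e^\tau-e^{\tau_0})}\bigl(\Gamma_\alpha\oplus\Tau_\alpha\bigr)$ with a remainder $Z$ (your $R$) obeying $\|Z\|_{L^p(m)}\lesssim (e^{s/2}|\zeta|)^{1-2\delta}\|w\|_{L^p(m)}$, closed by Gr\"onwall so that the accumulated growth $\exp\bigl(C_\alpha|\zeta|^{1/2}(e^{\tau/4}-e^{\tau_0/4})-|\zeta|^2(e^\tau-e^{\tau_0})\bigr)$ is absorbed by the Gaussian --- exactly the mechanism you describe. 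The one place your route genuinely diverges is the gradient estimates \eqref{vector1}--\eqref{vector3}: you propose to differentiate the $(\Gamma_\alpha,\Tau_\alpha)$-Duhamel series directly, whereas the paper (following \cite{jacob}) obtains the short-time smoothing \eqref{vector1} by a contraction-mapping argument in a norm-encoding space $X_{p,q}$, writing $S_\alpha$ as a Duhamel integral of $S_0$ rather than of the block-diagonal $2$d semigroups, then glues this to the long-time bound \eqref{regularization}; and it imports \eqref{vector2}--\eqref{vector3} wholesale from [Theorem 3.1; \cite{jacob}]. Your sketch of these is the thinnest part of the proposal: ``integrating the outer $\divz_{\tau_0}$ by parts against $P$'' does not by itself explain the role of $\divz\divz F = 0$ (which in \cite{jacob} is the condition guaranteeing $\divz_{\tau_0}F$ lies in the divergence-free class on which $S_\alpha$ acts, not a device for cheapening a derivative), nor where the fixed rate $\mu(\alpha)<1/2$ actually comes from. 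Since the paper itself treats these as imports rather than reproving them, your plan is acceptable in outline but would need the contraction-mapping step (or an equivalent) spelled out to count as a proof of \eqref{vector1}--\eqref{vector3}.
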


\begin{proof}
The construction of $S_0$ and $S_\alpha$ along with (\ref{regularization}) for $p = q = 2$ are shown in [Proposition 3.2; \cite{jacob}].

The general case of (\ref{regularization}) $S_0$ is due to the forthcoming explicit representation. Say $F(\xi,z)$ is a fixed function and define $F^\prime(\xi,z) = F(e^{-\tau_0/2}\xi,z)$. Then, since $S_0$ is $e^{t\Delta}$ in self-similar coordinates, we obtain
\begin{equation}
S_0(\tau,\tau_0)F_\sigma(\xi,z) = e^{\tau - \tau_0}\left[e^{\left(e^\tau - e^{\tau_0}\right)\Delta}F^\prime\right](\xi e^{\tau/2},z).
\end{equation}
Using the well-known heat kernel in $\R^3$, a change of coordinates, and the explicit representation of $e^{\tau\L}$, found, for example, in [Appendix B.2; \cite{jacob}], we find
\begin{equation}
\begin{aligned}\label{representation}
	S_0(\tau,\tau_0)F(\xi,z) &= \frac{e^{\tau -\tau_0}}{\left(4\pi\left(e^\tau - e^{\tau_0}\right)\right)^{3/2}}\int_{\R^3} \exp\left(-\frac{|e^{\tau/2}\xi - \xi^\prime|^2 + |z - z^\prime|^2}{4(e^\tau - e^{\tau_0})}\right) F\left(\xi^\prime e^{-\tau_0/2},z^\prime\right) \ d\xi^\prime dz^\prime\\
	&=\frac{e^{\tau - \tau_0}\left[4\pi \left(e^\tau - e^{\tau_0}\right)\right]^{-1/2}}{4\pi a(\tau -\tau_0)}\int_{\R^3} \biggr[\exp\left(-\frac{|z - z^\prime|^2}{4(e^\tau - e^{\tau_0})}\right)\\
	&\qquad\qquad\exp\left(-\frac{|\xi - \xi^\prime|^2}{4a(\tau - \tau_0)}\right)F_\sigma\left(\xi^\prime e^{(\tau - \tau_0)/2},z^\prime\right)\biggr]\ d\xi^\prime dz^\prime\\
	&= \frac{1}{\left[4\pi \left(e^\tau - e^{\tau_0}\right)\right]^{1/2}}\int_{\R} \exp\left(-\frac{|z - z^\prime|^2}{4(e^\tau - e^{\tau_0})}\right)e^{(\tau-\tau_0)\L}
F(z^\prime) \ dz^\prime.
\end{aligned}
\end{equation}
Taking the Fourier transform of (\ref{representation}) in $z$ yields
\begin{equation}
\widehat{S_0(\tau,\tau_0)F} = ce^{-(e^\tau - e^{\tau_0})\zeta^2}e^{(\tau-\tau_0)\L}\widehat{F},
\end{equation}
where $c$ is a universal constant. Now, we prove (\ref{regularization}) for $S_0$ using the estimates for $e^{\tau\L}$ from Proposition \ref{prop1}:
\begin{equation}\label{scalarproof}
\begin{aligned}
\|S_0(\tau,\tau_0)F\|_{B_zL^p(m)} &\lesssim \int_{\R} e^{-(e^\tau - e^{\tau_0})\zeta^2}\|e^{(\tau-\tau_0)\L}\widehat{F}\|_{L^p(m)} \ d\zeta \lesssim \int_{\R} \|e^{(\tau-\tau_0)\L}\widehat{F}\|_{L^p(m)}\\
 &\lesssim_{p,q,m,\gamma} \frac{e^{\gamma(\tau - \tau_0)}}{a(\tau - \tau_0)^{\frac{1}{q} - \frac{1}{p}}}\|F\|_{B_zL^q(m)}.
\end{aligned}
\end{equation}

The general case of (\ref{regularization}) for $S_\alpha$ follows from a simple modification of the proof of [Proposition 3.2; \cite{jacob}]. Indeed, as in \cite{jacob}, we take the Fourier transform in $z$, and write the resulting equation as a Duhamel integral of $(\Gamma_\alpha,\Tau_\alpha)$ to obtain
\begin{equation}
\begin{aligned}
w(\tau,\zeta) = e^{-|\zeta|^2(e^\tau - e^{\tau_0})}&\begin{bmatrix}\Gamma_\alpha(\tau-\tau_0) & 0\\0 & \Tau_\alpha(\tau-\tau_0)\end{bmatrix}w(\tau_0,\zeta)\\
	 &+ \alpha\int_{\tau_0}^\tau e^{-|\zeta|^2(\tau - \tau_0)}\begin{bmatrix}\Gamma_\alpha(\tau-s) & 0\\0 & \Tau_\alpha(\tau-s)\end{bmatrix}Z(w(s,\zeta)) \ ds,
\end{aligned}
\end{equation}
where $Z$ contains the perturbative terms
\begin{equation}
Z(w(\tau)) = ie^{\tau/2}\zeta\begin{bmatrix}G(\xi)\left[ie^{\tau/2}\zeta(e^\tau|\zeta|^2 - \Delta_\xi)^{-1}(w^\xi)^\perp -\nabla_\xi^\perp(e^\tau|\zeta|^2 - \Delta_\xi)^{-1}w^z(\tau)\right] \\ \nabla^\perp_\xi \cdot \left(G(\xi)(e^\tau|\zeta|^2 - \Delta_\xi)^{-1}w^\xi\right) + \nabla_\xi G\cdot \left(e^\tau|\zeta|^2 - \Delta_\xi)^{-1} + \Delta_\xi^{-1}\right)\nabla_\xi^\perp \end{bmatrix}.
\end{equation}
In \cite{jacob}, the estimate,
\begin{equation}
\|Z(w(s,\zeta))\|_{L^2(m)} \lesssim_{m,\delta} \left(e^{s/2}|\zeta|\right)^{1 - 2\delta}\|w(s,\zeta)\|_{L^2(m)},
\end{equation}
is shown for each $m > 1$, $0 < \delta < 1/2$ using $L^2(m)$ weighted estimates on the Biot-Savart law in [Lemma 3.3; \cite{jacob}]. In the proof of the weighted estimates, one can replace the embedding $L^2(m) \embeds L^r$ for $1 \le r \le 2$ for $m > 1$ with $L^p(m) \embeds L^r$ for $1\le r \le p$ for $m > 2$ to obtain $L^p(m)$ weighted estimates. These modified estimates are sufficient to show
\begin{equation}
\|Z(w(s,\zeta))\|_{L^p(m)} \lesssim_{m,\delta} \left(e^{s/2}|\zeta|\right)^{1 - 2\delta}\|w(s,\zeta)\|_{L^p(m)}
\end{equation}
for each $0 < \delta < 1 - \frac{1}{p}$ and $1 \le p \le 2$ as in the proof of [Corollary 3.4; \cite{jacob}]. Then, taking $\delta = 1/4$ and using the regularization estimates on $\Gamma_\alpha$ and $\Tau_\alpha$ from Proposition \ref{prop1}, for $\gamma > 0$, we obtain the inequality
\begin{equation}
\begin{aligned}
\|w(\tau,\zeta)\|_{L^p(m)} \lesssim_{m,\gamma,\alpha} e^{-|\zeta|^2(\tau - \tau_0)}&\frac{e^{(\tau - \tau_0)\gamma}}{a(\tau - \tau_0)^{\frac{1}{q} - \frac{1}{p}}}\|w(\tau_0,\zeta)\|_{L^q(m)}\\
	&+ \int_{\tau_0}^\tau e^{-|\zeta|^2(\tau - s)}\left(e^{s/2}|\zeta|\right)^{1/2}e^{\gamma (\tau - s)}\|w(s,\zeta)\|_{L^p(m)}.
\end{aligned}
\end{equation}
Applying Gr\"onwall's inequality to $\tau \mapsto e^{|\zeta|^2\tau}e^{-\gamma\tau}\|w(\tau,\zeta)\|_{L^p(m)}$, yields the estimate
\begin{equation}
\begin{aligned}
\|w(\tau,\zeta)\|_{L^p(m)} &\lesssim_{m,\gamma,\alpha} \frac{e^{(\tau - \tau_0)\gamma}}{a(\tau - \tau_0)^{\frac{1}{q} - \frac{1}{p}}}\|w(\tau_0,\zeta)\|_{L^q(m)} \exp\left(C_\alpha|\zeta|^{1/2}\left(e^{\tau/4} - e^{\tau_0/4}\right) - |\zeta|^2\left(e^{\tau} - e^{\tau_0}\right)\right)\\
	&\lesssim_{m,\gamma,\alpha} \frac{e^{(\tau - \tau_0)\gamma}}{a(\tau - \tau_0)^{\frac{1}{q} - \frac{1}{p}}}\|w(\tau_0,\zeta)\|_{L^q(m)}
\end{aligned}
\end{equation}
which directly implies (\ref{regularization}).

Second, the gradient estimates (\ref{scalar1}) and (\ref{scalar2}) are shown in [Section 3.3; \cite{jacob}] for general $p$ and $q$ for short time. The estimates  (\ref{scalar1}) and (\ref{scalar2}) for general $p$ and $q$ and long time follow from (\ref{representation}) and the corresponding estimate on $e^{\tau\L}$ from Proposition \ref{prop1} in the same manner as in (\ref{scalarproof}).

Third, the gradient estimate (\ref{vector1}) is shown for short times in [Lemma 3.6; \cite{jacob}] for $p = q = 2$ by writing $S_\alpha$ as a Duhamel integral of $S_0$, and using the estimates on $S_0$ and a contraction mapping argument on a suitable Banach space $X$ encoding the desired estimate. One can modify the choice of $X$, so that $X_{p,q}$ is the closed subspace of $C([\tau_0,\tau_0 + \delta];B_zL^q(m))$ such that the norm,
\begin{equation}
\begin{aligned}
\|\Omega\|_{X_{p,q}} = \sup_{\tau_0 < \tau < \tau_0 + \delta} &\bigg(\|\Omega(\tau)\|_{B_zL^q(m)} + a(\tau - \tau_0)^{\frac{1}{q} - \frac{1}{p}}\|\Omega\|_{B_zL^p(m)}\\
	&+ a(\tau-\tau_0)^{\frac{1}{q} - \frac{1}{p} + \frac{1}{2}}\|\sdv_\tau \Omega(\tau)\|_{B_zL^p(m)}\bigg),
\end{aligned}
\end{equation}
is finite. This modification still yields the estimate
\begin{equation}
\|S_\alpha(\tau,\tau_0)F\|_{X_{p,q}} \lesssim_{p,q,m,\alpha} \|F\|_{B_zL^q(m)} + \delta^{1- \frac{1}{p}}\|S_\alpha(\tau,\tau_0)F\|_{X_{p,q}}.
\end{equation}
Taking $\delta$ sufficiently small, depending only on $\alpha$, $m$, $p$, and $q$, yields the short time estimates for general $p$ and $q$. Then, [Section 3.4; \cite{jacob}] explains how to combine the long time estimates (\ref{regularization}) and the short time smoothing estimates (\ref{vector1}) to obtain (\ref{vector1}) for all time.

Finally, (\ref{vector2}) and (\ref{vector3}) are shown in [Theorem 3.1; \cite{jacob}] for $p = 2$ and $1 \le q \le 2$.
\end{proof}

\subsection{The Vorticity Equation on $\R^2$}

We will need properties of the two dimensional self-similar vorticity equation, namely properties of
\begin{equation}\label{svorticity2}
\partial_t \omega + \nabla_\xi^\perp \Delta_\xi^{-1} \omega \cdot \nabla_\xi \omega = \L\omega, 
\end{equation}
where $\omega: \R^2 \times \R^+ \rightarrow \R$ is the scalar vorticity. It is known that the (\ref{svorticity2}) is well-posed on $L^2(m)$ and that the invariant sets of the flow on $L^2(m)$ are quite rigid.

\begin{theorem}[(From \cite{gallaywayne1,gallaywayne2})]\label{thm4}
For any $m > 1$, (\ref{svorticity2}) generates a strongly continuous flow $\phi$ on $L^2(m)$. Moreover, if $\A\subset L^2(m)$ is fully invariant under $\phi$ in the sense that $\phi(\tau)\A = \A$ for any $\tau > 0$, then
\begin{equation}
\A \subset \{\beta G \ | \ \beta\in \R\}.
\end{equation}
\end{theorem}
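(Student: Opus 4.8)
The plan is to prove the rigidity statement by a LaSalle invariance principle adapted to (\ref{svorticity2}): exhibit a Lyapunov functional that is strictly monotone off the equilibria, show the semiflow is asymptotically compact, classify the equilibria as exactly the Oseen vortices, and then use full invariance to force every point of $\A$ into $\{\beta G\}$. The well-posedness assertion I would take from Gallay--Wayne: $\L$ generates an analytic semigroup on $L^2(m)$ with a spectral gap, the Biot--Savart nonlinearity $u\cdot\nabla_\xi\omega$ with $u=\nabla_\xi^\perp\Delta_\xi^{-1}\omega$ is subcritical relative to $\L$ in $L^2(m)$, and a fixed-point plus energy argument yields a globally defined, strongly continuous semiflow $\phi$. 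I record two structural facts: the mass $\beta=\int_{\R^2}\omega\,d\xi$ is conserved (because $\int_{\R^2}\L f\,d\xi=0$ and $u\cdot\nabla_\xi\omega=\nabla_\xi\cdot(u\omega)$ is a pure divergence), and parabolic smoothing maps $L^2(m)$ instantly into $H^1(m+\epsilon)$. With the compact embedding $H^1(m+\epsilon)\embeds L^2(m)$ of Lemma \ref{lem3}, $\phi(\tau)$ carries bounded sets into precompact ones for $\tau>0$; hence bounded orbits are precompact and possess nonempty, compact, invariant $\alpha$- and $\omega$-limit sets.

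The key object is a strict Lyapunov functional. For single-signed data with mass $\beta>0$ the natural choice is the relative entropy
\[ H(\omega)=\int_{\R^2}\omega\,\log\frac{\omega}{\beta G}\,d\xi\ge 0, \]
which vanishes precisely at $\omega=\beta G$. Writing $g=\omega/G$ and using $\L\omega=\nabla_\xi\cdot(G\nabla_\xi g)$, the Fokker--Planck part contributes the Fisher-type dissipation
\[ \frac{d}{d\tau}H\Big|_{\L}=-\int_{\R^2}\frac{|\nabla_\xi g|^2}{g}\,G\,d\xi\le 0, \]
which vanishes iff $g$ is constant, i.e.\ iff $\omega=\beta G$. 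The crucial point is that the transport term contributes nothing: using $\nabla_\xi\cdot u=0$ (so $\int u\cdot\nabla_\xi F(\omega)\,d\xi=0$ for any $F$) together with the identity $\int_{\R^2}\omega\,(\xi\cdot u)\,d\xi=0$ --- a consequence of the rotational structure of the Biot--Savart kernel --- every piece of $-\int (u\cdot\nabla_\xi\omega)(\log(\omega/\beta G)+1)\,d\xi$ cancels. Thus $H$ is a strict Lyapunov functional with global minimum at $\beta G$. For signed vorticity $H$ is unavailable, and here I would invoke the more delicate global analysis of Gallay--Wayne, which supplies both the requisite dissipativity (an absorbing ball in $L^2(m)$, so that fully invariant sets are bounded, hence precompact) and a Lyapunov structure valid for arbitrary circulation.

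Next I classify the steady states of (\ref{svorticity2}) in $L^2(m)$. A stationary solution satisfies $\L\omega=u\cdot\nabla_\xi\omega$. If $\omega$ is radial then $\nabla_\xi\omega$ is radial while $u$ is azimuthal, so $u\cdot\nabla_\xi\omega\equiv 0$ and $\omega\in\ker\L$; since $\ker\L\cap L^2(m)=\mathrm{span}\{G\}$, the radial equilibria are exactly $\{\beta G\}$. That every equilibrium must in fact be radial is the remaining input I would take from Gallay--Wayne (it is consistent with, and in their treatment follows from, the same monotonicity and symmetrization machinery); granting it, the equilibrium set at mass $\beta$ is the single point $\beta G$.

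Finally I assemble the rigidity. Fix $\omega_*\in\A$ with mass $\beta$. Full invariance $\phi(\tau)\A=\A$, together with backward uniqueness for the parabolic flow (so that $\phi(\tau)|_\A$ is a bijection), yields a complete orbit $\gamma:\R\to\A$ with $\gamma(0)=\omega_*$, which is precompact by the above. Along $\gamma$ the Lyapunov functional $\Phi$ (the relative entropy, or its signed analogue) is non-increasing, so $\Phi(\gamma(\tau))\to L_-$ as $\tau\to-\infty$; the $\alpha$-limit set is nonempty, compact, invariant, and $\Phi\equiv L_-$ on it, whence the dissipation vanishes there and it consists of equilibria, i.e.\ it equals $\{\beta G\}$. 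Thus $L_-=\Phi(\beta G)=\min\Phi$ on the mass-$\beta$ level. Since $\Phi$ is non-increasing and converges to its global minimum as $\tau\to-\infty$, it is constant along $\gamma$, forcing the dissipation to vanish identically and $\gamma$ to be an equilibrium; therefore $\omega_*=\beta G$. As $\omega_*\in\A$ was arbitrary, $\A\subset\{\beta G:\beta\in\R\}$. I expect the main obstacle to be exactly the signed-vorticity analysis underpinning the second and third paragraphs --- producing a strict Lyapunov functional and the dissipativity estimate for arbitrary circulation, and ruling out non-radial steady states --- since for single-signed data the entropy argument is essentially elementary, whereas the cubic transport contribution obstructs any naive weighted-$L^2$ energy for genuinely signed vorticity.
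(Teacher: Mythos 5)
First, be aware that the paper does not prove Theorem \ref{thm4}: both the generation of the flow and the rigidity of invariant sets are imported wholesale from \cite{gallaywayne1,gallaywayne2} (the text immediately following the statement says exactly this), so your proposal is really being measured against the argument of those references. Your single-signed analysis is correct and is indeed the second half of the Gallay--Wayne proof: the relative entropy $H(\omega)=\int\omega\log(\omega/\beta G)\,d\xi$ is non-increasing, the transport term drops out by incompressibility together with the antisymmetry identity $\int\omega\,(u\cdot\xi)\,d\xi=0$ (which follows from $\zeta\cdot K(\zeta)=0$ for the Biot--Savart kernel after symmetrizing), and the Fisher-type dissipation $\int G|\nabla_\xi(\omega/G)|^2(\omega/G)^{-1}d\xi$ vanishes only at $\omega=\beta G$.

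The genuine gap is exactly where you flag it, but the missing idea is concrete and should be named rather than deferred: for signed vorticity the first Lyapunov functional in \cite{gallaywayne2} is simply the $L^1$ norm $\|\omega(\tau)\|_{L^1}$. Since $\omega$ solves an advection--diffusion equation with divergence-free drift, Kato's inequality gives $\tfrac{d}{d\tau}\|\omega\|_{L^1}\le\int_{\R^2}\L|\omega|\,d\xi=0$, with strict decrease unless $\omega$ has a definite sign; applying the LaSalle principle with this functional first forces every element of a fully invariant precompact set to be sign-definite, and only then does the entropy argument take over. Your substitute for this step --- classifying steady states and ruling out non-radial equilibria --- is not the mechanism and is not needed: LaSalle identifies the invariant set with the zero set of the entropy \emph{dissipation}, which is directly $\{\beta G\}$, not with the equilibria of the full equation. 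Two further points you elide: (i) $H$ is not finite on all of $L^2(m)$ for $m>1$ close to $1$ (it requires $\int|\omega|\,|\xi|^2\,d\xi<\infty$ and $\omega\log\omega\in L^1$), so one must first show that elements of the invariant set inherit Gaussian pointwise bounds from the confinement/smoothing estimates for the Fokker--Planck semigroup before $H$ can even be evaluated on them; and (ii) the compactness and boundedness of $\A$ that your LaSalle argument needs must be established (in the application it comes from $\A$ being an $\alpha$-limit set of a bounded trajectory), since the theorem as stated does not assume it. As written, your proposal reproduces the easy half of the Gallay--Wayne argument and black-boxes the half that makes the theorem true for arbitrary circulation.
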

For a proof and discussion of the well-posedness of (\ref{svorticity2}) see \cite{gallaywayne1}. For a proof of the rigidity of invariant sets see \cite{gallaywayne2}.

Now, since we will be working on $B_zL^2(m)$ we want (\ref{svorticity2}) to generate a flow $\Phi$ on $B_zL^2(m)$. That is, for any initial data $w_0\in B_zL^2(m)$, we want a unique global in time solution $w\in C([0,\infty);B_zL^2(m))$ to
\begin{equation}\label{mild2dform1}
w(\tau) = e^{\tau\L}w_0 - \int_0^\tau e^{(\tau - s)\L}(\nabla^\perp_\xi \Delta_\xi^{-1}w(s) \cdot \nabla_\xi w(s)) \ ds.
\end{equation}
While this seems reasonable to expect, we are only able to show local in time well-posedness, which is sufficient for our purposes.

\begin{lemma}\label{flowlem}
Fix $m > 2$. For any $K \subset B_zL^2(m)$ compact, there exists a $T = T(K) > 0$ such that for any $w_0\in K$, there is a unique mild solution $w \in C([0,T];B_zL^2(m))$ to (\ref{mild2dform1}) with initial data $w_0$.
Moreover, $w(\tau)$ is given by
\begin{equation}\label{commutation}
w(\tau,\xi,z) = (\phi(\tau)w_0(\cdot,z))(\xi),
\end{equation}
where $\phi$ is the flow generated by (\ref{svorticity2}) on $L^2(m)$.
\end{lemma}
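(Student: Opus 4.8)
The plan is to prove existence and uniqueness by a contraction mapping argument directly in $C([0,T];B_zL^2(m))$, and then to identify the resulting solution with the slice-wise two-dimensional flow by exploiting the fact that every operator appearing in \eqref{mild2dform1} acts locally in the $z$-variable. First I would rewrite the nonlinearity in divergence form: since $\nabla_\xi^\perp\Delta_\xi^{-1}w$ is divergence-free, $\nabla_\xi^\perp\Delta_\xi^{-1}w\cdot\nabla_\xi w' = \div_\xi(v\, w')$ with $v = \nabla_\xi^\perp\Delta_\xi^{-1}w$, so that the Duhamel term can absorb one $\xi$-derivative onto the semigroup. Writing $B(w,w') = \div_\xi(\nabla_\xi^\perp\Delta_\xi^{-1}w\, w')$ and $\mathcal F[w](\tau) = e^{\tau\L}w_0 - \int_0^\tau e^{(\tau-s)\L}B(w(s),w(s))\,ds$, I would look for a fixed point of $\mathcal F$ in a closed ball of $C([0,T];B_zL^2(m))$.

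The key estimate is bilinear. Using the Biot-Savart bound \eqref{biotsavart} together with the embedding $L^2(m)\embeds L^{4/3}$ from Lemma \ref{lem3}, one has $\|v\|_{B_zL^4}\lesssim\|w\|_{B_zL^{4/3}}\lesssim_m\|w\|_{B_zL^2(m)}$; then the H\"older inequality of Lemma \ref{lem1}, applied to $\langle\xi\rangle^m w'$ and $v$, gives $\|v\,w'\|_{B_zL^{4/3}(m)}\lesssim\|v\|_{B_zL^4}\|w'\|_{B_zL^2(m)}$. Finally the gradient smoothing estimate of Proposition \ref{prop1} for $e^{\sigma\L}\div_\xi$ (with $p=2$, $q=4/3$, and using $e^{-\sigma/2}e^{\gamma\sigma}\lesssim1$ on $[0,T]$) produces a factor $\sim a(\sigma)^{-3/4}$, so that
\[
\Big\|\int_0^\tau e^{(\tau-s)\L}B(w(s),w'(s))\,ds\Big\|_{B_zL^2(m)}\lesssim \Big(\int_0^\tau a(\tau-s)^{-3/4}\,ds\Big)\sup_s\|w(s)\|_{B_zL^2(m)}\sup_s\|w'(s)\|_{B_zL^2(m)},
\]
where the time factor tends to $0$ as $\tau\to0$. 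Since $e^{\tau\L}$ is strongly continuous on $B_zL^2(m)$ (Proposition \ref{prop1}), $\mathcal F$ maps a ball of radius $\sim R$ into itself and is a contraction there once $T$ is small, with $R=\sup_{w_0\in K}\|w_0\|_{B_zL^2(m)}<\infty$ because $K$ is compact hence bounded; thus $T$ may be chosen depending on $K$ only through $R$. This produces a unique fixed point in the ball, and the same bilinear estimate together with a Gr\"onwall argument upgrades this to uniqueness in the full class $C([0,T];B_zL^2(m))$, since any such solution is bounded on $[0,T]$.

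It remains to identify $w$ with the slice-wise flow, which I regard as the main point. By Lemma \ref{extensionlem}, $e^{\tau\L}$ on $B_zL^2(m)$ is exactly the slice-wise extension of the two-dimensional operator, $(e^{\tau\L}f)(\xi,z)=(e^{\tau\L}f(\cdot,z))(\xi)$, while $\nabla_\xi^\perp\Delta_\xi^{-1}$, $\nabla_\xi$ and the pointwise product are all local in $z$; hence $B(w,w)(\cdot,z)$ depends only on the slice $w(\cdot,z)$. Consequently the Picard iterates $w_n$ of $\mathcal F$ (started from $w_0$) satisfy, for every fixed $z$, that $w_n(\tau,\cdot,z)$ equals the $n$-th two-dimensional Picard iterate for \eqref{svorticity2} with data $w_0(\cdot,z)\in L^2(m)$; note $w_0(\cdot,z)\in L^2(m)$ for every $z$, with norm $\le\|w_0\|_{B_zL^2(m)}\le R$, by the embedding $B_zL^2(m)\embeds C_z(D;L^2(m))$ of Lemma \ref{lem3}. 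Choosing $T$ small enough that both the $B_zL^2(m)$ iteration and the two-dimensional iteration for data of size $\le R$ converge geometrically, we have $w_n\to w$ in $B_zL^2(m)$, hence $w_n(\tau,\cdot,z)\to w(\tau,\cdot,z)$ in $L^2(m)$ uniformly in $z$ by the same embedding, while the two-dimensional iterates converge to the unique two-dimensional mild solution $\phi(\tau)w_0(\cdot,z)$ (Theorem \ref{thm4}). By uniqueness of limits, $w(\tau,\xi,z)=(\phi(\tau)w_0(\cdot,z))(\xi)$, as claimed. The only real subtlety is that $\phi$ is nonlinear and therefore does not commute with the Fourier transform in $z$; the locality-of-iterates observation is precisely what circumvents having to bound $\int_{D^*}\|\widehat{\phi(\tau)w_0}(\zeta)\|_{L^2(m)}\,d\zeta$ directly.
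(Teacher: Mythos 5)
Your proposal is correct and follows essentially the same route as the paper: a contraction mapping for the Duhamel map built on the $e^{\sigma\L}\div_\xi$ smoothing estimate with the integrable $a(\sigma)^{-3/4}$ kernel, followed by the observation that every operator in \eqref{mild2dform1} acts slice-wise in $z$, so the fixed point must coincide with $\phi(\tau)w_0(\cdot,z)$. The only cosmetic difference is that you iterate in a ball of radius $\sim R$ about the origin (so $T$ depends on $K$ only through its bound) and get uniqueness by a singular Gr\"onwall argument, whereas the paper iterates in a small ball about $w_0$ itself, using the compactness of $K$ to make the strong continuity of $e^{\tau\L}$ uniform.
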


\begin{proof}
This follows from a contraction mapping argument in the space $C([0,T_{w_0}];B_zL^2(m))$. Set $N(K) = \max_{w_0\in K} \|w_0\|_{B_zL^2(m)}$. Fix $w_0\in K$ and let $\Psi$ be the operator, 
\begin{equation}
\Psi(w)(\tau) = e^{\tau\L}w_0 + \int_0^\tau e^{(\tau-s)\L}(\nabla_\xi^\perp\Delta_\xi^{-1}w(s) \cdot \nabla_\xi w(s)) \ ds,
\end{equation}
and define
$$B_{T,\epsilon} = \set{ w \in C([0,T];B_zL^2(m)) : \sup_{0 < \tau < T}\norm{w - w_0}_{B_z L^2(m)} \le 2\epsilon }. $$
Then, we estimate the nonlinearity using the linear propagator estimates in Proposition \ref{prop1}, H\"older's inequality and the boundedness of the 2d Biot-Savart law (from 2d estimates and Lemma \ref{extensionlem}), and the embedding $B_zL^2(m) \embeds B_zL^{4/3}$:
\begin{equation}
\begin{aligned}
\biggr\|\int_0^\tau e^{(\tau-s)\L}\nabla_\xi &\cdot \left(w(s)\nabla_\xi^\perp\Delta_\xi^{-1}w(s)\right) \ ds\biggr\|_{B_zL^2(m)}\\
&\hspace{1cm}\lesssim_m \int_0^\tau \frac{e^{\gamma(\tau - s)}}{a(\tau-s)^{3/4}}\|w(s)\nabla_\xi^\perp\Delta_\xi^{-1}w(s)\|_{B_zL^{4/3}(m)} \ ds\\
	&\hspace{1cm} \lesssim_m \int_0^\tau \frac{e^{\gamma (\tau - s)}}{a(\tau-s)^{3/4}}\|w(s)\|_{B_zL^2(m)}\|\nabla_\xi^\perp\Delta_\xi^{-1}w(s)\|_{B_zL^4} \ ds\\
	&\hspace{1cm} \lesssim_m \int_0^\tau \frac{e^{\gamma (\tau - s)}}{a(\tau-s)^{3/4}}\|w(s)\|_{B_zL^2(m)}^2 \ ds.
\end{aligned}
\end{equation}
Then, since $K$ is compact and $e^{\tau\L}$ is a strongly continuous semigroup on $B_zL^2(m)$, there exists a time $T(K)$ such that
$$\sup_{w_0 \in K, \tau < T(K)} \|e^{\tau\L}w_0 - w_0\|_{B_zL^2(m)} < \epsilon.$$
Also, take $T(K)$ and $\epsilon$ sufficiently small so that
$$C_m(\epsilon + N)^2\int_0^T \frac{e^{\gamma (T - s)}}{a(T-s)^{3/4}} \ ds < \epsilon.$$
For such $T,\epsilon$, $\Psi(B_{T,\epsilon}) \subset B_{T,\epsilon}$. The contraction property on $B_{T,\epsilon}$ is proved similarly. Therefore, there is a unique mild solution to (\ref{mild2dform1}) in $B_{T,\epsilon}$, where $T$, $\epsilon$ are chosen uniformly for $w_0\in K$.
Furthermore, taking $v\in C([0,T];B_zL^2(m))$ a mild solution to (\ref{mild2dform1}) and letting $t\rightarrow 0^+$, it follows that $v \in B_{\delta,\epsilon}$ for $\delta$ sufficiently small, which implies $v = w$. Finally, (\ref{commutation}) follows since all of the operators in (\ref{mild2dform1}) commute with evaluation in $z$.
\end{proof}


\section{Proof of Theorem 1}

\subsection{Proof Sketch}

The main idea of the proof of Theorem \ref{thm1} is to adapt a two dimensional compactness-rigidity argument from the proof of \big[Proposition 4.5; \cite{gallaghergallay}\big].
In self-similar coordinates, $w^z$ satisfies (in the mild sense) the scalar equation
\begin{equation}\label{heuristiceqn}
\partial_\tau w^z + \left(\nabla^\perp_\xi (\slap)^{-1} w^z\right) \cdot \nabla_\xi w^z + R(\tau) = \L w + \sdz^2 w^z,
\end{equation}
where $\L$ is the Fokker-Planck operator defined in (\ref{FokkerPlanck}) and our remainder terms are given as
\begin{equation}\label{Rdefn1}
R(\tau) = \left(v^\xi - \left(\nabla^\perp_\xi (\slap)^{-1}w^z\right)\right) \cdot \nabla_\xi w^z + v^z \sdz w^z - w\cdot \sdv v^z,
\end{equation}
the additional terms in the 3d vorticity equation.
We proceed in four steps analogous to the proof in \cite{gallaghergallay}:
\begin{itemize}
\item First, we will use parabolic regularity combined with our assumptions to show that the trajectory $\{w^z(\tau)\}_{\tau < \tau^*}$ is precompact in $B_zL^2(m)$. Hence, if $\A\subset B_zL^2(m)$ is the $\alpha$-limit set of the trajectory $\{w^z(\tau)\}_{\tau < \tau^*}$, then $\A$ is compact and nonempty.
\item Second, we will show that the remainder terms vanish in some norm, $\|R(\tau)\|_X \rightarrow 0$ as $\tau \rightarrow -\infty$.
Combined with the asymptotic convergence of operators, $\slap \rightarrow \Delta_\xi$ and linear propagators $S_0 \rightarrow e^{\tau\L}$, we will conclude that $w^z$ asymptotically (in the $\tau \rightarrow -\infty$ regime) satisfies the 2d self-similar vorticity equation.
\item Third, we will use steps 1 and 2 to show that the $\alpha$-limit set $\A$ is invariant under the flow $\Phi(\tau):B_zL^2(m) \rightarrow B_zL^2(m)$ generated by the 2d vorticity equation.
\item Fourth, we will use the rigidity of invariant sets of $\phi$ shown in \cite{gallaywayne2} to show that $\A = \{\alpha G\}$. This implies $w^z(\tau) \rightarrow \alpha G$ in $B_zL^2(m)$ as $\tau \rightarrow -\infty$ and $w$ lies within the uniqueness class shown in \cite{jacob}.
\end{itemize}

\subsection{Step 1: Parabolic Regularity and Compactness} \label{sec:parabolic}

In this section, we study the regularity of mild solutions to the self-similar vorticity equation (\ref{svorticity}). That is, we study functions,
$$w \in C((-\infty,\tau^*);B_zL^2(m)) \cap L^\infty((-\infty,\tau^*);B_zL^2(m))$$
satisfying the integral equation,
\begin{equation}\label{mildsvorticity}
\begin{aligned}
w(\tau) &= S_0(\tau,\tau_0)w(\tau_0) - \int_{\tau_0}^\tau S_0(\tau,s)\left[(v(s) \cdot \sdv_s) w(s) - (w(s) \cdot \sdv_s) v(s)\right] \ ds\\
	v(\tau) &= -\sdv \times (\slap)^{-1} w(\tau),
\end{aligned}
\end{equation}
for each $\tau_0 < \tau < \tau^*$, where we recall $S_0$ is the linear propagator defined in (\ref{S_0defn}). 
The main result of this section is the following proposition for arbitrary (in the sense that no reference is made to initial data) mild solutions in the sense of (\ref{mildsvorticity}).
\begin{proposition}\label{prop5}
Let $w(\tau)$ be a solution to (\ref{mildsvorticity}) on $\R^2 \times \T$ such that for some $-\infty < \tau^* \le \infty$, $m > 2$, and $\Lambda > 0$,
\begin{enumerate}[label=(\roman*)]
\item \label{assumptioni} $\sup_{\tau < \tau^*}\norm{w(\tau)}_{B_z L^2(m)} \le \Lambda;$
\item \label{assumptionii} $\lim_{\tau \to -\infty}\norm{w^\xi(\tau)}_{B_z L^2(m)} = 0$;
\item \label{assumptioniii}  and $\sup_{\tau < \tau^*}\norm{\partial_z w^z(\tau)}_{B_z L^2(m)} < \infty$.
\end{enumerate}
Then,
\begin{align}
&\sup_{\tau < \tau^*} \|\sdv_\tau w(\tau)\|_{B_zL^2(m)} \le C(\Lambda, m),\label{eqn1}\\
&\lim_{\tau \rightarrow -\infty} \|\sdv_\tau w^\xi(\tau)\|_{B_zL^2(m)} = 0\label{eqn2},
\end{align}
and moreover, the trajectory $\{w^z(\tau)\}_{\tau < \tau^*}$ is precompact in $B_zL^2(m^\prime)$ for each $2 < m^\prime < m$.
\end{proposition}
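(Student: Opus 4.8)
The plan is to establish the three conclusions in sequence, with the gradient bounds on $w$ feeding into the precompactness argument via the compact embedding $H^1(m') \embeds L^2(m'')$ supplied by Lemma \ref{lem3}. I would work frequency-by-frequency in $z$: writing $\hat{w}(\tau,\zeta)$ for the Fourier transform in $z$, the $B_zL^2(m)$ norm is the $d\zeta$-integral of the $L^2(m)$ norms, so it suffices to produce estimates that are integrable in $\zeta$ after applying the propagator bounds of Proposition \ref{prop2}.

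First I would prove the uniform gradient bound \eqref{eqn1}. The idea is a short-time-increment Duhamel representation: fix $\tau < \tau^*$ and apply \eqref{mildsvorticity} on the interval $[\tau - \delta, \tau]$ for a fixed $\delta$. Applying $\sdv_\tau$ and using the smoothing estimate \eqref{scalar1} on the linear term $S_0(\tau, \tau-\delta)w(\tau-\delta)$ gives a factor $a(\delta)^{-1/2}$ times $\Lambda$. For the nonlinear term I would write the quadratic interaction in divergence form, $(v \cdot \sdv)w - (w \cdot \sdv)v = \sdv \cdot (v \tens w - w \tens v)$ using $\divz v = 0$, so that I can apply the divergence-form smoothing estimate \eqref{scalar2}; the product $v \tens w$ is controlled in $B_zL^{4/3}$ by Lemma \ref{lem1} together with the Biot-Savart bound \eqref{biotsavart}, both ultimately bounded by $\Lambda^2$ using assumption \ref{assumptioni}. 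The time singularity $a(\tau-s)^{-1/4 - 1/2 - 1/4}$ is integrable, yielding a bound uniform in $\tau$.

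Second, the decay \eqref{eqn2} follows from the same representation by exploiting assumption \ref{assumptionii}. I would split $\sdv_\tau w^\xi$ into its linear and nonlinear contributions. For the linear piece $\sdv_\tau S_0(\tau,\tau_0) w^\xi(\tau_0)$ I would send $\tau_0 \to -\infty$ along a fixed lag so that the input $\|w^\xi(\tau_0)\|_{B_zL^2(m)} \to 0$ drives the whole term to zero (the smoothing factor $a(\delta)^{-1/2}$ stays fixed). The nonlinear contribution to the $\xi$-component contains at least one factor of $w^\xi$ or $v^\xi$, each of which is governed by $\|w^\xi\|_{B_zL^2(m)}$ via Biot-Savart; since this vanishes as $\tau \to -\infty$ while the other factor stays bounded by $\Lambda$, the product vanishes. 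Here the main subtlety is bookkeeping which components appear, but the structure of the Biot-Savart law \eqref{biotsavartstructure} makes $v^\xi$ and $v^z$ each depend linearly on the vorticity components in a way that keeps at least one $w^\xi$ factor in every $\xi$-component nonlinear term.

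Finally, for precompactness of $\{w^z(\tau)\}$ in $B_zL^2(m')$, the strategy is to obtain, uniformly in $\tau$, a bound in a space that embeds compactly and is tight in the $\zeta$ variable. From \eqref{eqn1} I have $\sup_\tau \|\nabla_\xi w^z(\tau)\|_{B_zL^2(m)} \lesssim 1$, which with assumption \ref{assumptioni} gives a uniform $B_z H^1(m)$ bound; the compact embedding $H^1(m) \embeds L^2(m')$ from Lemma \ref{lem3} handles compactness in $\xi$ for each fixed frequency. The genuinely delicate point, and the one I expect to be the main obstacle, is the tightness in the $z$-frequency variable $\zeta$: since $B_zL^2(m)$ is an $L^1_\zeta$-type space, precompactness requires controlling the tail $\int_{|\zeta| > R} \|\widehat{w^z}(\tau,\zeta)\|_{L^2(m')}\, d\zeta$ uniformly in $\tau$. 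This is exactly where assumption \ref{assumptioniii} enters: the bound on $\partial_z w^z$ gives $\int |\zeta| \|\widehat{w^z}(\tau,\zeta)\|_{L^2(m)}\, d\zeta \lesssim 1$, so by a Chebyshev-type argument the high-frequency tail is $O(R^{-1})$ uniformly in $\tau$, furnishing the equicontinuity needed in $\zeta$ (cf. Remark \ref{rmk:Aiii}). Combining the compactness in $\xi$ at each frequency with the uniform $\zeta$-tail control, a diagonal extraction over a countable dense set of frequencies produces convergent subsequences, and dominated convergence in $\zeta$ upgrades this to convergence in $B_zL^2(m')$, establishing precompactness.
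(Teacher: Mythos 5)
Your treatment of the uniform gradient bound \eqref{eqn1} has a genuine gap in the time-singularity count. Writing the nonlinearity in divergence form and then applying $\sdv_\tau$ to the Duhamel integral forces you to estimate $\sdv_\tau S_0(\tau,s)\sdv_s\cdot F$ with $F = v\tens w - w\tens w\in B_zL^{4/3}(m)$; the gradient in front and the divergence behind each cost a factor $a(\tau-s)^{-1/2}$ on top of the $a(\tau-s)^{-1/4}$ from the $L^{4/3}\to L^2$ smoothing, so the total singularity is $a(\tau-s)^{-5/4}$, not the $a(\tau-s)^{-1}$ you record --- and even $a^{-1}$ is not integrable near $s=\tau$. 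There is no combined estimate of the form \eqref{scalar1}+\eqref{scalar2} in Proposition \ref{prop2} that would save this. The paper's Lemma \ref{lem7} avoids the issue by \emph{not} using the divergence form: it estimates $(v\cdot\sdv)w - (w\cdot\sdv)v$ directly in $B_zL^{4/3}(m)$ by $\Lambda\,\norm{\sdv w}_{B_zL^2(m)}$, accepts the unknown $\norm{\sdv w}$ on the right-hand side (so only the single gradient in front appears, giving the integrable singularity $a(\tau-s)^{-3/4}$), and closes by an absorption argument over a short time increment $T$, using that $\sup_{\tau}\int_{\tau_0}^{\tau} a(\tau-\tau_0)^{1/2}a(\tau-s)^{-3/4}a(s-\tau_0)^{-1/2}\,ds \approx T^{1/4}$ is small for $T$ small, uniformly in $\tau_0$.

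In the decay step \eqref{eqn2}, your claim that every nonlinear term in the $w^\xi$ equation carries a factor controlled by $\norm{w^\xi}_{B_zL^2(m)}$ is not manifestly true: by \eqref{biotsavartstructure}, $v^\xi$ contains the piece $-\nabla_\xi^\perp(\slap)^{-1}w^z$, so the vortex-stretching term $(w\cdot\sdv)v^\xi$ produces $w^z\,\sdz\nabla_\xi^\perp(\slap)^{-1}w^z$, which involves only $w^z$. The paper's Lemma \ref{lem12} handles exactly this term by commuting derivatives and invoking the divergence-free condition $\sdz w^z = -\nabla_\xi\cdot w^\xi$ to reintroduce a $w^\xi$ factor; without that observation your bookkeeping does not close. (The same absorption issue as above also reappears here, since the nonlinearity estimate involves $\norm{\sdv w^\xi}$ itself; cf.\ Lemma \ref{lem6}.) Your precompactness argument for $\{w^z(\tau)\}$ --- per-frequency $H^1(m)$ bounds plus the compact embedding, $\zeta$-tail tightness from Assumption \ref{assumptioniii} via Chebyshev, and diagonal extraction --- is essentially the paper's Lemma \ref{lem8} and is fine, granted \eqref{eqn1}.
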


\begin{remark}
The boundedness of $\sdv w(\tau)$ in (\ref{eqn1}) relies only on Assumption \ref{assumptioni} while the decay of $\sdv w^\xi$ in (\ref{eqn2}) depends on both Assumption \ref{assumptioni} and Assumption \ref{assumptionii}. See Lemma \ref{lem7} and Lemma \ref{lem6} below.
\end{remark}

We prove this proposition by examining the implications of various combinations of the Assumptions \ref{assumptioni}, \ref{assumptionii}, and \ref{assumptioniii} in the sequence of lemmas below.

\begin{lemma}\label{lem7}
Suppose $w$ is a mild solution to (\ref{mildsvorticity}) on $\R^2 \times \T$ such that for some $m > 2$, $-\infty < \tau^* \le  \infty$, and $\Lambda > 0$, 
\begin{equation}
\sup_{\tau < \tau^*} \|w(\tau)\|_{B_zL^2(m)} \le \Lambda < \infty.
\end{equation}
Then, there is a constant $C = C(m,\Lambda)$, such that
\begin{equation}
\sup_{\tau < \tau^*} \|\sdv_\tau w(\tau)\|_{B_zL^2(m)} \le C.
\end{equation}
\end{lemma}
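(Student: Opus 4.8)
The plan is to prove this as a short-time parabolic smoothing estimate, which is then propagated across $\tau$-intervals of fixed length using only the uniform bound $\Lambda$. Fix a starting time $\tau_0 < \tau^*$ and work on $[\tau_0,\tau_0+\delta]$, with $\delta = \delta(\Lambda,m) > 0$ chosen small at the end. Differentiating the mild equation (\ref{mildsvorticity}) gives, for $\tau \in (\tau_0,\tau_0+\delta]$,
\[
\sdv_\tau w(\tau) = \sdv_\tau S_0(\tau,\tau_0)w(\tau_0) - \int_{\tau_0}^\tau \sdv_\tau S_0(\tau,\sigma)\,\mathrm{NL}(\sigma)\,d\sigma, \qquad \mathrm{NL} = (v\cdot\sdv_\sigma)w - (w\cdot\sdv_\sigma)v .
\]
The organizing quantity is the time-weighted norm $\mathcal N_r(\tau) = \sup_{\tau_0<s\le\tau} a(s-\tau_0)^{1/2}\,\|\sdv_s w(s)\|_{B_zL^r(m)}$; the goal is to close an estimate of the form $\mathcal N_r(\tau_0+\delta)\lesssim_{\Lambda,m} \|w(\tau_0)\|_{B_zL^2(m)} \le \Lambda$ by absorption, after which evaluating at $s=\tau_0+\delta$ (where $a(\delta)^{1/2}$ is a fixed constant) and letting $\tau_0$ vary yields the claim. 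The linear term is controlled immediately by (\ref{scalar1}) with $p=q=2$, contributing $a(\tau-\tau_0)^{-1/2}\Lambda$. Throughout, $v$ is recovered from $w$ by the self-similar Biot–Savart law, so its norms are controlled by $w$ through Lemma \ref{lem2}, and all propagator bounds are taken from Proposition \ref{prop2}.

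For the nonlinearity I would treat the two physical contributions separately, using the divergence-free structure $\sdv\cdot v = \sdv\cdot w = 0$ to write $\mathrm{NL} = \divz(w\tens v - v\tens w)$ when convenient. The transport term $(v\cdot\sdv)w$ places the derivative on $w$: by H\"older (Lemma \ref{lem1}), the Biot–Savart gain (\ref{biotsavart}), and the embedding $B_zL^2(m)\embeds B_zL^{4/3}$ of Lemma \ref{lem3}, one has $\|(v\cdot\sdv)w\|_{B_zL^{4/3}(m)}\le\|v\|_{B_zL^4}\|\sdv w\|_{B_zL^2(m)}\lesssim\Lambda\,\|\sdv_\sigma w(\sigma)\|_{B_zL^2(m)}$. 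Inserting this into (\ref{scalar1}) with $q=4/3,\,p=2$ produces the singularity $a(\tau-\sigma)^{-3/4}$; the resulting Beta-type integral, after multiplication by $a(\tau-\tau_0)^{1/2}$, is bounded by $C\delta^{1/4}\Lambda\,\mathcal N_2(\tau)$ and hence absorbable into the left-hand side for $\delta$ small.

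The genuinely three-dimensional difficulty — and the main obstacle, absent from the $2$d argument of \cite{gallaghergallay} — is the vortex-stretching term $(w\cdot\sdv)v$. Here $\sdv v$ is the zeroth-order Biot–Savart operator applied to $w$, which by (\ref{riesz}) is controlled only in $B_zL^p$ for $p\le 2$; consequently $(w\cdot\sdv)v$ lies in $B_zL^1(m)$ and no better, since $\|(w\cdot\sdv)v\|_{B_zL^1(m)}\le\|w\|_{B_zL^2(m)}\|\sdv v\|_{B_zL^2}\lesssim\Lambda^2$, while landing it in $B_zL^q(m)$ with $q>1$ is impossible because $B_zL^2(m)$ does not embed into any $L^r$ with $r>2$. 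The endpoint estimate (\ref{scalar1}) with $p=2,\,q=1$ would then give the non-integrable singularity $a(\tau-\sigma)^{-1}$. My remedy is to first prove the gradient bound in \emph{subcritical} integrability: applying (\ref{scalar1}) with $q=1$ and a fixed $p\in(1,2)$ produces the singularity $a(\tau-\sigma)^{-(3/2-1/p)}$, which is integrable precisely because $p<2$. Choosing for concreteness $p=4/3$, and bounding the transport term with $q=1$ via $\|v\|_{B_zL^4}\|\sdv w\|_{B_zL^{4/3}(m)}$ (absorbed through $\mathcal N_{4/3}$), this closes a bound for $\mathcal N_{4/3}$ and yields $\sup_{\tau<\tau^*}\|\sdv_\tau w(\tau)\|_{B_zL^{4/3}(m)}\lesssim_{\Lambda,m} 1$.

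Finally I would bootstrap the integrability in one step. The uniform bound $\sdv w\in B_zL^{4/3}(m)$ upgrades, through the weighted version of the Sobolev embedding $\dot W^{1,4/3}\embeds L^4$ in two dimensions (as in Lemma \ref{lem3}), to $w\in B_zL^{4}(m)$ uniformly in $\tau$. With this in hand the stretching term improves to $\|(w\cdot\sdv)v\|_{B_zL^{2}(m)}\le\|w\|_{B_zL^4(m)}\|\sdv v\|_{B_zL^4}\lesssim\|w\|_{B_zL^4(m)}\|w\|_{B_zL^4}\lesssim_{\Lambda}1$ by (\ref{riesz}), now landing in $B_zL^2(m)$. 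Re-running the estimate with $p=q=2$ (singularity $a(\tau-\sigma)^{-1/2}$, integrable) and the transport term absorbed exactly as in the second paragraph closes a bound for $\mathcal N_2$, giving the short-time estimate $\|\sdv_\tau w(\tau)\|_{B_zL^2(m)}\lesssim_{\Lambda,m} a(\tau-\tau_0)^{-1/2}\Lambda$ on $(\tau_0,\tau_0+\delta]$. Since $\|w(\tau_0)\|_{B_zL^2(m)}\le\Lambda$ at every admissible starting time, evaluating at $\tau=\tau_0+\delta$ and letting $\tau_0$ range over $(-\infty,\tau^*-\delta)$ (with continuity up to $\tau^*$) produces the desired uniform bound $\sup_{\tau<\tau^*}\|\sdv_\tau w(\tau)\|_{B_zL^2(m)}\le C(\Lambda,m)$.
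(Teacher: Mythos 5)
Your overall scheme (Duhamel for $\sdv w$, the time-weighted norm $a(\tau-\tau_0)^{1/2}\|\sdv w(\tau)\|$, absorption on intervals of length $\delta(\Lambda,m)$, then sliding $\tau_0$) is exactly the paper's, and your treatment of the transport term is identical. Where you diverge is the vortex-stretching term, and the premise for your detour is mistaken: you claim that $(w\cdot\sdv)v$ cannot be placed in $B_zL^{q}(m)$ for any $q>1$ because $B_zL^2(m)$ does not embed into $L^r$ for $r>2$. But the estimate \eqref{riesz} holds for all $1<p<\infty$, and the paper simply writes
\begin{equation*}
\|(w\cdot \sdv) v\|_{B_zL^{4/3}(m)} \lesssim_m \|w\|_{B_zL^2(m)}\|\sdv v\|_{B_zL^4} \lesssim_m \|w\|_{B_zL^2(m)}\|w\|_{B_zL^4} \lesssim_m \Lambda\|\nabla_\xi w\|_{B_zL^{4/3}} \lesssim_m \Lambda\|\nabla_\xi w\|_{B_zL^2(m)},
\end{equation*}
using \eqref{riesz} at $p=4$ and the Sobolev embedding $\dot W^{1,4/3}(\R^2)\embeds L^4$. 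The point you missed is that it is perfectly admissible for the stretching term to be bounded by the \emph{unknown} gradient norm $\|\sdv w\|_{B_zL^2(m)}$ --- that is precisely what the absorption scheme is built to handle, and it is exactly how you already treat the transport term. With this, both nonlinear contributions land in $B_zL^{4/3}(m)$ with the integrable kernel $a(\tau-s)^{-3/4}$ and the proof closes in a single pass; no bootstrap is needed.

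Your two-step bootstrap (first a uniform $B_zL^{4/3}(m)$ gradient bound with the stretching term dumped into $B_zL^1(m)$, then an upgrade to $B_zL^2(m)$) is a workable alternative in principle, but as written it has two gaps you would need to patch. First, the $q=1$ endpoint of the smoothing estimate \eqref{scalar1} is outside the stated range of Proposition \ref{prop2}, which requires $1<q\le p\le 2$; for $S_0$ this endpoint can be recovered from the representation \eqref{representation} together with the $q=1$ case of Proposition \ref{prop1}, but that argument has to be supplied, and for the stretching term there is no way to avoid $q=1$ in your step A (any $q>1$ would indeed require $\|\sdv v\|_{B_zL^r}$ with $r>2$ bounded by $\Lambda$ alone, which is not available). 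Second, your weights do not close as stated: $L^2(m)$ embeds into $L^{4/3}(m')$ only for $m'<m-1/2$ and into $L^1(m')$ only for $m'<m-1$, so the linear term and the $B_zL^1$ placement of the nonlinearity in step A force you to run the intermediate estimate at a strictly smaller weight, and the ``weighted Sobolev'' upgrade to $w\in B_zL^4(m)$ likewise costs a commutator term $\|\langle\xi\rangle^{m-1}w\|_{B_zL^{4/3}}$. These losses are harmless for the final conclusion (since $m>2$ gives room and step B only needs $\|\sdv v\|_{B_zL^4}\lesssim\|w\|_{B_zL^4}$ unweighted), but the bookkeeping must be done. Given that the one-pass argument above avoids all of this, I would recommend adopting it.
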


\begin{proof}
We have that for each $\tau_0 < \tau$, $\sdv w$ satisfies the integral equation
\begin{equation}
\sdv_\tau w(\tau) = \sdv_\tau S_0(\tau,\tau_0)w(\tau_0) - \int_{\tau_0}^\tau \sdv_\tau S_0(\tau,s)\left[(v \cdot \sdv_s)w - (w\cdot \sdv_s)v\right](s) \ ds.
\end{equation}
Fix $\gamma > 0$ and $T > 0$ so that $\tau < \tau_0 + T$. The gradient bounds in Proposition \ref{prop2} imply
\begin{equation}
\|\sdv_\tau w(\tau)\|_{B_zL^2(m)} \lesssim_{m} \frac{\Lambda e^{T\gamma}}{a(\tau-\tau_0)^{1/2}} + \int_{\tau_0}^\tau \frac{e^{T\gamma}}{a(\tau - s)^{3/4}}\|(v \cdot \sdv_s)w - (w\cdot \sdv_s)v\|_{B_zL^{4/3}(m)} \ ds.
\end{equation}
Now, we bound the transport terms via H\"older (\ref{holder}), the embedding $B_zL^2(m) \embeds B_zL^1$, and the boundedness of the Biot-Savart law (\ref{biotsavart}).
\begin{equation}
\begin{aligned}
\|(v\cdot \sdv) w\|_{B_zL^{4/3}(m)} &\lesssim_{m} \|v\|_{B_zL^4}\|\sdv w\|_{B_zL^2(m)}\\
	&\lesssim_{m} \|w\|_{B_zL^{4/3}}\|\sdv w\|_{B_zL^2(m)} \lesssim_m \Lambda\|\sdv w\|_{B_zL^2(m)}.
\end{aligned}
\end{equation}
Similarly, we bound the vortex stretching terms via H\"older (\ref{holder}), the boundedness of the Riesz transforms (\ref{riesz}), and the Sobolev embedding $B_z\dot{W}^{1,4/3} \embeds B_zL^4$,
\begin{equation}
\begin{aligned}
\|(w\cdot \sdv) v\|_{B_zL^{4/3}(m)} &\lesssim_m \|w\|_{B_zL^2(m)}\|\sdv v\|_{B_zL^4}\\
	&\lesssim_{m} \|w\|_{B_zL^2(m)}\|w\|_{B_zL^4}\\
	&\lesssim_m \|w\|_{B_zL^2(m)}\|\nabla_\xi w\|_{B_zL^{4/3}}\\
	&\lesssim_{m} \Lambda\|\nabla_\xi w\|_{B_zL^2(m)}.
\end{aligned}
\end{equation}
Thus, we obtain for each $\tau_0 < \tau < \tau_0 + T$, and some $C > 0$ depending only on $\Lambda$ and $m$,
\begin{equation}
\|\sdv_\tau w(\tau)\|_{B_zL^2(m)} \le \frac{Ce^{T\gamma}}{a(\tau-\tau_0)^{1/2}} + Ce^{T\gamma}\int_{\tau_0}^\tau \frac{\|\sdv_s w(s)\|_{B_zL^2(m)}}{a(\tau-s)^{3/4}} \ ds.
\end{equation}
Since $a(\tau - \tau_0)$ is unbounded for $\tau$ close to $\tau_0$, we write 
\begin{multline}
\sup_{\tau_0 < \tau < \tau_0 + T}a(\tau-\tau_0)^{1/2}\|\sdv_\tau w(\tau)\|_{B_zL^2(m)} \le\\ Ce^{T\gamma} + Ce^{T\gamma}I(\tau_0,T)\sup_{\tau_0 < \tau < \tau_0 + T} \left(a(\tau-\tau_0)^{1/2}\|\sdv_\tau w(\tau)\|_{B_zL^2(m)}\right),\label{eqn3}
\end{multline}
where
\begin{equation}
I(\tau_0,T) = \sup_{\tau_0 < \tau < \tau_0 + T} \int_{\tau_0}^\tau \frac{a(\tau-\tau_0)^{1/2}}{a(\tau-s)^{3/4}a(s-\tau_0)^{1/2}} \ ds.
\end{equation}
After a change of variables, we obtain
\begin{equation}
I(\tau_0,T) = \sup_{\tau_0 < \tau < \tau_0 + T} \int_{0}^{\tau -\tau_0} \frac{a(\tau-\tau_0)^{1/2}}{a(\tau - \tau_0 -s)^{3/4}a(s)^{1/2}}\ ds = \sup_{0 < \tau < T} \int_0^\tau \frac{a(\tau)^{1/2}}{a(\tau -s)^{3/4}a(s)^{1/2}} \ ds.\label{I1}
\end{equation}
Thus, recalling that $a(\tau)$ is of order $\tau$ for $\tau \ll 1$, by a change of variables,
\begin{equation}
\int_0^\tau \frac{\tau^{1/2}}{(\tau -s)^{3/4}s^{1/2}} \ ds = \int_0^1 \frac{\tau}{s^{1/2}\tau^{3/4}(1 - s)^{3/4}} \ ds \approx \tau^{1/4}.\label{I2}
\end{equation}
Together (\ref{I1}) and (\ref{I2}) then imply that $I(\tau_0,T)\approx T^{1/4}$ uniformly in $\tau$ and $\tau_0$ in the $T \ll 1$ asymptotic regime. In particular, for $T$ sufficiently small depending only on $m$ and $\Lambda$,
\begin{equation}
\sup_{\tau_0 < \tau < \tau_0 + T} a(\tau - \tau_0)^{1/2}\|\sdv_\tau w(\tau)\|_{B_zL^2(m)} \le 2Ce^{T\gamma},
\end{equation}
independent of $\tau_0$. Thus, we obtain the uniform bound, 
$$\|\sdv w(\tau)\|_{B_zL^2(m)} \le  \frac{2Ce^{T\gamma}}{a(T)^{1/2}},$$ 
for $T$ sufficiently small.
\end{proof}

Now, combining Assumption \ref{assumptioni} with Assumption \ref{assumptioniii}, we obtain the following compactness result. Note, we are still discussing mild solutions to (\ref{mildsvorticity}) without reference to the initial datum.
\begin{lemma}\label{lem8}
Suppose $w$ is a mild solution to (\ref{mildsvorticity}) on $\R^2 \times \T$ such that for some $m > 2$ and $-\infty < \tau^* \le  \infty$, 
\begin{equation}
\sup_{\tau < \tau^*} \|w(\tau)\|_{B_zL^2(m)} + \|\partial_z w^z(\tau)\|_{B_zL^2(m)} < \infty.
\end{equation}
Then, the trajectory $\{w^z(\tau)\}_{\tau < \tau^*}$ is precompact in $B_zL^2(m^\prime)$ for each $2 < m^\prime < m$.
\end{lemma}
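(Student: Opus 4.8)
The plan is to obtain precompactness of $\{w^z(\tau)\}_{\tau < \tau^*}$ in $B_zL^2(m')$ by establishing uniform bounds in a strictly stronger space and then invoking a compact embedding, handling both the $\xi$-localization (via the weight) and the $z$-frequency localization (via the Wiener structure) separately. The guiding principle is that $B_zL^2(m')$ precompactness requires control of three independent sources of noncompactness: escape of mass in $\xi$ (handled by upgrading the weight from $m'$ to $m$), oscillation/escape in $\xi$-frequency (handled by the parabolic gradient bound), and escape of mass to high $z$-frequencies (handled by Assumption \ref{assumptioniii}).

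First I would record the regularity we already have. Lemma \ref{lem7} applies under Assumption \ref{assumptioni} alone and gives the uniform bound $\sup_{\tau<\tau^*}\|\sdv_\tau w(\tau)\|_{B_zL^2(m)} \le C(\Lambda,m)$; in particular $\sup_\tau \|\nabla_\xi w^z(\tau)\|_{B_zL^2(m)} < \infty$. Combined with the hypothesis $\sup_\tau\|w^z(\tau)\|_{B_zL^2(m)} < \infty$, this controls the full $\xi$-gradient and the weighted $L^2$ norm at weight $m > m'$. The key compact embedding is the last one in Lemma \ref{lem3}, $H^1(m'+\epsilon)\embeds L^2(m')$, which is compact; choosing $\epsilon$ small so that $m'+\epsilon < m$, every time-slice of the $\zeta$-fibers $\widehat{w^z}(\tau,\zeta)$ lives in a fixed ball of $H^1(m'+\epsilon)$, hence in a common precompact subset $\mathcal K \subset L^2(m')$.

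Next I would upgrade this fiberwise compactness to compactness in the Wiener norm $B_zL^2(m') = L^1_\zeta(L^2(m'))$. Here the essential extra ingredient is Assumption \ref{assumptioniii}: because $\|\partial_z w^z(\tau)\|_{B_zL^2(m)} = \int |\zeta|\,\|\widehat{w^z}(\tau,\zeta)\|_{L^2(m)}\,d\zeta$ is uniformly bounded, a Chebyshev-type argument shows the high-frequency tails are uniformly small, i.e. for every $\delta>0$ there is $R$ with $\sup_\tau \int_{|\zeta|>R}\|\widehat{w^z}(\tau,\zeta)\|_{L^2(m')}\,d\zeta < \delta$. This is exactly the uniform-integrability/tightness condition in $\zeta$ that converts fiberwise precompactness into $L^1_\zeta$-valued precompactness. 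Concretely, to extract a convergent subsequence from $\{w^z(\tau_n)\}$ I would truncate to $|\zeta|\le R$, where the map $\zeta\mapsto \widehat{w^z}(\tau,\zeta)$ takes values in the fixed compact set $\mathcal K\subset L^2(m')$ with an equicontinuity modulus supplied by the uniform gradient bound, apply a vector-valued Arzelà–Ascoli / Fréchet–Kolmogorov type criterion on the compact $\zeta$-interval, and then let $R\to\infty$ using the uniform tail smallness, diagonalizing.

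I expect the main obstacle to be the passage from fiberwise ($\zeta$-by-$\zeta$) compactness in $L^2(m')$ to genuine compactness in the $L^1_\zeta(L^2(m'))$ norm, since $L^1$-valued compactness is not automatic from pointwise-in-$\zeta$ compactness and boundedness — one genuinely needs both the uniform $\zeta$-tail control from Assumption \ref{assumptioniii} and some uniform control of the $\zeta$-modulus of continuity of $\zeta\mapsto\widehat{w^z}(\tau,\zeta)$ to prevent concentration or oscillation in the frequency variable itself. The weight downgrade from $m$ to $m'$ is what buys the needed compactness in the spatial fiber via Lemma \ref{lem3}, while the Fréchet–Kolmogorov criterion in $\zeta$ (tightness plus equicontinuity) is what I anticipate requires the most care to state and verify cleanly.
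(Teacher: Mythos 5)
Your overall strategy is the same as the paper's: uniform $H^1_\xi(m)$ bounds on the fibers $\widehat{w^z}(\tau,\zeta)$ from Lemma \ref{lem7}, fiberwise precompactness in $L^2(m')$ from the compact embedding of Lemma \ref{lem3} after downgrading the weight, and uniform smallness of the high-$|\zeta|$ tails from Assumption (iii) via Chebyshev to upgrade to precompactness in $L^1_\zeta L^2_\xi(m')$. That is exactly the paper's argument, so the proposal is essentially correct.

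One point needs correcting, though. You identify the ``$\zeta$-modulus of continuity'' as a genuine obstacle and propose to handle it with a vector-valued Fr\'echet--Kolmogorov/Arzel\`a--Ascoli criterion, with the equicontinuity ``supplied by the uniform gradient bound.'' In the setting of this lemma the domain is $\R^2\times\T$, so $\zeta$ ranges over $\Z$ with counting measure: the truncation $|\zeta|\le R$ leaves only finitely many fibers, a plain diagonal extraction over the countable frequency set gives pointwise-in-$\zeta$ convergence, and tightness alone closes the argument (this is what the paper does). No equicontinuity in $\zeta$ is needed. Moreover, the justification you offer for it would not hold even where it is needed: the bound on $\sdv w$ controls $\nabla_\xi$ and $e^{\tau/2}\partial_z$, i.e.\ multiplication by $\zeta$ on the Fourier side, and says nothing about $\partial_\zeta\widehat{w^z}$. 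Control of $\partial_\zeta\widehat{w^z}$ requires a weight in $z$ on the physical side --- this is precisely the role of Assumption (iv) in Theorem \ref{thm2}, where the whole-line case is handled via the Aubin--Lions--Simon lemma in Lemma \ref{lem19}. So the extra machinery you anticipate is unnecessary here, and the ingredient you would feed into it is not available from the stated hypotheses.
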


\begin{remark}
In the corresponding step in the 2d case \big[Lemma 6.1; \cite{gallaghergallay}\big], the gain in regularity from preceding lemma and the compact embedding $H^1(m+\epsilon) \embeds L^2(m)$ are sufficient to conclude compactness of the trajectory. However, in our case, we do not gain the same amount of regularity in the $z$-direction as in the $\xi$-directions. In particular, Lemma \ref{lem7} implies the bound
\begin{equation}\label{bound}
\|\partial_z w(\tau)\|_{B_zL^2(m)} \le e^{-\tau/2}C(m,\|w\|_{L^\infty_\tau B_zL^2(m)}),
\end{equation}
which degenerates as $\tau\rightarrow -\infty$. The role of Assumption \ref{assumptioniii} is to replace (\ref{bound}) with a bound that does not degenerate at $\tau \rightarrow -\infty$ so that we may gain compactness of the trajectory. This is the only place in our proof where we need to use this assumption.
\end{remark}

\begin{proof}[Proof of Lemma \ref{lem8}]
Fix $2 < m^\prime < m$. Take $\{\tau_n\}_{n=1}^\infty$ an arbitrary sequences of times and let $\{f_n\} \subset L^1_\zeta L^2_\xi(m)$ be the sequence of functions $f_n(\xi,\zeta) = \hat w^z(\tau_n,\xi,\zeta)$ where $\zeta \in \Z$ is the frequency variable corresponding to the Fourier transform in the $z$ variable. Then, by the definition of the $B_zX$ spaces and Lemma \ref{lem7}, $\{f_n(\zeta)\}_{\zeta,n}$ is uniformly bounded in $H^1_\xi(m)$.  Moreover, the boundedness of $\partial_z w^z(\tau_n)$ implies
$$\|\partial_z w^z(\tau_n)\|_{B_zL^2(m)} = \sum_{\zeta\in \Z} |\zeta| \|f_n(\zeta)\|_{L^2(m)} \le C(m) < \infty,$$
and the sequence of functions $\{\zeta f_n(\zeta)\}_{n=1}^\infty$ is uniformly bounded in $L^1_\zeta L^2_\xi(m)$. 

First, we use that the embedding $H^1(m) \embeds L^2(m^\prime)$ is compact (Lemma \ref{lem3}) and a diagonalization argument to produce a subsequence that we call $\{f_n(\zeta,\xi)\}_{n=1}^\infty$ such that for each $\zeta \in \Z$,
$$\lim_{n \rightarrow \infty} \|f_n(\zeta,\cdot) - f(\zeta,\cdot)\|_{L^2(m)} = 0,$$
for some function $f(\zeta,\xi) \in L^1_\zeta L^2_\xi(m)$.

Second, we show that $f_n \rightarrow f$ in $L^1_\zeta L_\xi^2(m^\prime)$.
To that end, fix $\epsilon > 0$ and let $\zeta_0 > 0$ arbitrary. Then,
\begin{equation}
\sum_\zeta \|f_n(\zeta) - f(\zeta)\|_{L^2(m^\prime)} \le \sum_{|\zeta| > \zeta_0} \|f(\zeta)\|_{L^2(m^\prime)} + \sum_{|\zeta| > \zeta_0} \|f_n(\zeta)\|_{L^2(m^\prime)} + \sum_{|\zeta| \le \zeta_0} \|f_n - f\|_{L^2(m^\prime)}.
\end{equation}
By Fatou's lemma, $f$ is summable and the first term may be made arbitrarily small by taking $\zeta_0$ large. By the uniform boundedness of $\zeta f_n(\zeta)$, we have tightness and the the second term may be made arbitrarily small uniformly in $n$ by taking $\zeta_0$ large. Finally, using the finiteness of the sum and pointwise convergence, the third term may be made arbitrarily small by taking $n$ large. Thus,
\begin{equation}
\lim_{n\rightarrow \infty} \|f_n - f\|_{L^1_\zeta L_\xi^2(m^\prime)} \le \epsilon
\end{equation}
for all $\epsilon > 0$ and $f_n$ converges to $f$ in $L_\zeta^1L_\xi^2(m^\prime)$. Letting $w^\infty = \check f$, the inverse Fourier transform (in $\zeta$) of $f$, we have shown that $w^z(\tau_n) \rightarrow w^\infty$ in $B_zL^2(m^\prime)$. Since $\{\tau_n\}_{n=1}^\infty$ was arbitrary, the trajectory $\{w^z(\tau)\}_{\tau < \tau^*}$ is precompact in $B_zL^2(m^\prime)$.
\end{proof}

We now want to show that Assumption \ref{assumptionii} implies the situation is better for $w^\xi$ than $w^z$. Indeed, assuming $w^\xi(\tau)$ vanishes backwards in time, we will use parabolic regularity to say the derivatives $\sdv w^\xi(\tau)$ vanish as well.
We begin by bounding the nonlinearity in the $w^\xi$ equations.
\begin{lemma}\label{lem12}
Suppose $w$ is a mild solution to (\ref{mildsvorticity}) on $\R^2\times \T$ such that for some $m > 2$, $-\infty < \tau^* \le \infty$, and $\Lambda > 0$,
\begin{equation}\label{energybound}
\sup_{\tau < \tau^*} \|w(\tau)\|_{B_zL^2(m)} \le \Lambda.
\end{equation}
Then,
\begin{equation}\label{bilinear_est}
\|(v\cdot \sdv) w^\xi - (w\cdot \sdv)v^\xi\|_{B_zL^{4/3}(m)} \lesssim_{m,\Lambda} \|w^\xi\|_{B_zL^2(m)} + \|\sdv w^\xi\|_{B_zL^2(m)},
\end{equation}
where the implicit constant depends on $\Lambda$ and $m$.
\end{lemma}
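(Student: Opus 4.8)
The plan is to split the bilinear expression into the transport term $(v\cdot\sdv)w^\xi$ and the vortex-stretching term $(w\cdot\sdv)v^\xi$, and to estimate each in $B_zL^{4/3}(m)$ by placing the weight $\langle\xi\rangle^m$ on whichever factor is a bare vorticity component, while keeping every Biot--Savart factor in an unweighted $L^4$ where the Riesz-type bound (\ref{riesz}) and the Sobolev embedding $B_z\dot{H}^1(m)\embeds B_zL^4$ of Lemma \ref{lem3} apply. For the transport term this is immediate: by H\"older's inequality (\ref{holder}) with the weight on $\sdv w^\xi$, together with the Biot--Savart bound (\ref{biotsavart}) and the embedding $B_zL^2(m)\embeds B_zL^{4/3}$,
\[
\|(v\cdot\sdv)w^\xi\|_{B_zL^{4/3}(m)}\le\|v\|_{B_zL^4}\|\sdv w^\xi\|_{B_zL^2(m)}\lesssim_m\|w\|_{B_zL^2(m)}\|\sdv w^\xi\|_{B_zL^2(m)}\lesssim_{m,\Lambda}\|\sdv w^\xi\|_{B_zL^2(m)},
\]
using (\ref{energybound}). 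Here it is harmless that $v$ is generated by the full vorticity $w$, since $v$ enters only through its $\Lambda$-bounded $L^4$-norm.

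The stretching term is the crux, because $v^\xi$ depends on $w^z$ as well as $w^\xi$. Using the Biot--Savart structure (\ref{biotsavartstructure}) I decompose $v^\xi = v^\xi_1 + v^\xi_2$ with $v^\xi_1 = \sdz(-\slap)^{-1}(w^\xi)^\perp$ and $v^\xi_2 = -\nabla_\xi^\perp(-\slap)^{-1}w^z$, i.e.\ the contributions from $w^\xi$ and $w^z$ respectively. For the $v^\xi_1$ part I keep the full $w$ (controlled by $\Lambda$) and observe that $\sdv v^\xi_1$ is obtained by applying (\ref{riesz}) to the vorticity $(w^\xi,0)$, so that $\|\sdv v^\xi_1\|_{B_zL^4}\lesssim\|w^\xi\|_{B_zL^4}\lesssim_m\|\sdv w^\xi\|_{B_zL^2(m)}$; hence $\|(w\cdot\sdv)v^\xi_1\|_{B_zL^{4/3}(m)}\lesssim_{m,\Lambda}\|\sdv w^\xi\|_{B_zL^2(m)}$. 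The genuinely delicate contribution is $(w\cdot\sdv)v^\xi_2$, which I must \emph{not} bound using the full $w$, since $v^\xi_2$ already carries a $w^z$ and no bare $w^\xi$ would survive. Instead I split $w\cdot\sdv = w^\xi\cdot\nabla_\xi + w^z\sdz$ and treat the two resulting terms differently.

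For $w^\xi\cdot\nabla_\xi v^\xi_2$ I place the weight on the bare factor $w^\xi$ and bound $\nabla_\xi v^\xi_2$ by applying (\ref{riesz}) to $(0,w^z)$, giving $\|\nabla_\xi v^\xi_2\|_{B_zL^4}\lesssim\|w^z\|_{B_zL^4}\lesssim_m\|\nabla_\xi w^z\|_{B_zL^2(m)}$; the latter is bounded by a constant $C(m,\Lambda)$ by Lemma \ref{lem7}, so this term is $\lesssim_{m,\Lambda}\|w^\xi\|_{B_zL^2(m)}$. For $w^z\sdz v^\xi_2$ I use the incompressibility constraint $\sdv\cdot w = \nabla_\xi\cdot w^\xi + \sdz w^z = 0$: commuting $\sdz$ through the $\xi$-Fourier multipliers defining $v^\xi_2$ and substituting $\sdz w^z = -\nabla_\xi\cdot w^\xi$ yields $\sdz v^\xi_2 = \nabla_\xi^\perp(-\slap)^{-1}\nabla_\xi\cdot w^\xi$, an order-zero Fourier multiplier applied to $w^\xi$ (uniformly bounded on $B_zL^4$ in $\tau$, by the same Calder\'on--Zygmund reasoning underlying (\ref{riesz})). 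Placing the weight on $w^z$ (bounded by $\Lambda$) then gives $\|w^z\sdz v^\xi_2\|_{B_zL^{4/3}(m)}\lesssim\Lambda\|w^\xi\|_{B_zL^4}\lesssim_{m,\Lambda}\|\sdv w^\xi\|_{B_zL^2(m)}$. Summing the four contributions produces (\ref{bilinear_est}). The one real obstacle is precisely this last term: the divergence-free identity is what converts the otherwise uncontrollable $w^z$ entering through $v^\xi_2$ back into the admissible quantity $\sdv w^\xi$, and it is essential that only here --- and nowhere through a bare $w^z$ --- does the $z$-component enter the final bound.
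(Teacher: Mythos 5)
Your proof is correct and follows essentially the same route as the paper: the identical splitting of $v^\xi$ via the Biot--Savart structure (\ref{biotsavartstructure}) into the $w^\xi$-generated and $w^z$-generated pieces, the further split $w\cdot\sdv = w^\xi\cdot\nabla_\xi + w^z\sdz$ on the delicate piece, the appeal to Lemma \ref{lem7} to absorb $\|\nabla_\xi w^z\|_{B_zL^2(m)}$ into the constant, and the use of the divergence-free identity $\sdz w^z = -\nabla_\xi\cdot w^\xi$ to convert the final term into an admissible $\sdv w^\xi$ contribution. No gaps.
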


\begin{remark}
Note that there is no $w^z$ in the right hand side of the estimate (\ref{bilinear_est}). This is because the only appearance of $w^z$ in the $w^\xi$ equations is contained in the nonlinear terms estimated in this lemma. We can bound any term containing $w^z$ in terms of $w^\xi$ either by using (\ref{energybound}) or by relating to $w^\xi$ by the divergence-free condition.
\end{remark}

\begin{proof}[Proof of Lemma \ref{lem12}]
To estimate the transport term, we use H\"older's inequality, the boundedness of the Biot-Savart law, and the embedding $L^{4/3}\embeds L^2(m)$ for $m > 1$ to compute
\begin{equation}
\begin{aligned}
\|(v\cdot \sdv)w^\xi\|_{B_zL^{4/3}(m)} &\lesssim_m \|v\|_{B_zL^4}\|\sdv w^\xi\|_{B_zL^2(m)}\\
	&\lesssim_m \|w\|_{B_zL^{4/3}}\|\sdv w^\xi\|_{B_zL^2(m)}\\
	&\lesssim_m \|w\|_{B_zL^2(m)}\|\sdv w^\xi\|_{B_zL^2(m)} \lesssim_{m} \Lambda\|\sdv w^\xi\|_{B_zL^2(m)}.
\end{aligned}
\end{equation}

Estimating the vortex stretching term is slightly more delicate and uses more structure of the equations. In particular, we split the velocity using the structure of the Biot-Savart law (\ref{biotsavartstructure})
\begin{equation}\label{splitting1}
(w \cdot \sdv) v^\xi = (w\cdot \sdv)(\sdz(\slap)^{-1}(w^\xi)^\perp) + (w\cdot \sdv)(\nabla_\xi^\perp (\slap)^{-1}w^z). 
\end{equation}
The first term in (\ref{splitting1}) is bounded via H\"older's inequality, the boundedness of Riesz transforms, and the Sobolev embedding $B_z\dot{W}^{1,4/3} \embeds B_zL^4$, and the embedding $B_zL^2(m) \embeds B_zL^{4/3}$ as
\begin{equation}
\|(w\cdot \sdv)(\sdz(\slap)^{-1}(w^\xi)^\perp)\|_{B_zL^{4/3}(m)} \lesssim_m \Lambda\|w^\xi\|_{B_zL^4} \lesssim_m \Lambda \|\nabla_\xi w^\xi\|_{B_zL^2(m)}.
\end{equation}
However, since $w^z$ does not appear on the right hand side of the desired inequality, we must analyze the second term in (\ref{splitting1}) more carefully. 
We decompose the second term in (\ref{splitting1}) further as
\begin{equation}\label{splitting2}
(w\cdot \sdv)(\nabla_\xi^\perp (\slap)^{-1}w^z) = (w^\xi \cdot \nabla_\xi)(\nabla_\xi^\perp (\slap)^{-1}w^z) + w^z\sdz(\nabla_\xi^\perp (\slap)^{-1}w^z).
\end{equation}
Again, the first term in (\ref{splitting2}) is bounded via H\"older's inequality, boundedness of Riesz transforms, the Sobolev embedding $B_zH^1\embeds B_zL^4$, and Lemma \ref{lem7} as
\begin{equation}
\|(w^\xi \cdot \nabla_\xi)(\nabla_\xi^\perp (\slap)^{-1}w^z)\|_{B_zL^{4/3}(m)} \lesssim_m \|w^\xi\|_{B_zL^2(m)}\|w^z\|_{B_zL^4} \lesssim_{m,\Lambda} \|w^\xi\|_{B_zL^2(m)}.
\end{equation}
For the second term in (\ref{splitting2}), we commute derivatives and rewrite it as
\begin{equation}
w^z\sdz\nabla_\xi^\perp (\slap)^{-1}w^z = -w^z\nabla_\xi^\perp \nabla_\xi \cdot (\slap)^{-1} w^\xi
\end{equation}
where we have used the divergence-free condition
\begin{equation}
\sdz w^z = -\nabla_\xi \cdot w^\xi.
\end{equation}
The resulting expression is bounded again using H\"older's inequality, boundedness of Riesz transforms, and the Sobolev embedding $B_z\dot{H}^1(m)\embeds B_zL^4$ as
\begin{equation}
\|w^z\nabla_\xi^\perp \nabla_\xi \cdot \slap^{-1} w^\xi\|_{B_zL^{4/3}(m)} \lesssim_m \Lambda\|\nabla_\xi w^\xi\|_{B_zL^2(m)}.
\end{equation}
This completes the proof as we have now bounded all terms in the vortex stretching term via the successive splittings (\ref{splitting1}) and (\ref{splitting2}).
\end{proof}

\begin{lemma}\label{lem6}
Suppose $w$ is a mild solution to (\ref{mildsvorticity}) on $\R^2\times \T$ such that for some $m > 2$, $\tau^* > -\infty$, and $\Lambda > 0$,
\begin{equation}
\sup_{\tau < \tau^*} \|w(\tau)\|_{B_zL^2(m)} \le \Lambda.
\end{equation}
Suppose also that $w^\xi\rightarrow 0$ strongly in $B_zL^2(m)$. Then, $\sdv w^\xi \rightarrow 0$ and $\sdz w^z \rightarrow 0$ in $B_zL^2(m)$.
\end{lemma}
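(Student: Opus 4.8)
The plan is to reduce both conclusions to a single decay estimate on $\sdv w^\xi$ and then run a short-time absorption argument in the spirit of Lemma \ref{lem7}, but tracking decay as $\tau \to -\infty$ rather than mere boundedness. For the reduction I would invoke the divergence-free condition in self-similar coordinates, $\nabla_\xi \cdot w^\xi + \sdz w^z = 0$, exactly as in the proof of Lemma \ref{lem12}. This gives $\sdz w^z = -\nabla_\xi \cdot w^\xi$ pointwise, hence $\|\sdz w^z\|_{B_zL^2(m)} = \|\nabla_\xi \cdot w^\xi\|_{B_zL^2(m)} \le \|\sdv w^\xi\|_{B_zL^2(m)}$. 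Thus it suffices to show $\|\sdv w^\xi(\tau)\|_{B_zL^2(m)} \to 0$; the decay of $\sdz w^z$ then follows immediately.

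For the main estimate I would differentiate the $\xi$-component of the mild equation,
\[
w^\xi(\tau) = S_0(\tau,\tau_0)w^\xi(\tau_0) - \int_{\tau_0}^\tau S_0(\tau,s)\left[(v\cdot\sdv_s)w^\xi - (w\cdot\sdv_s)v^\xi\right](s)\,ds,
\]
and apply the $S_0$-gradient bounds of Proposition \ref{prop2} (with $p=q=2$ on the linear term and $q=4/3$, $p=2$ on the Duhamel term), together with the bilinear bound of Lemma \ref{lem12}. Fixing $\gamma > 0$ and working on a window $[\tau-T,\tau]$ of small fixed length $T$, this produces
\[
\|\sdv_\tau w^\xi(\tau)\|_{B_zL^2(m)} \le \frac{Ce^{\gamma T}}{a(T)^{1/2}}\|w^\xi(\tau-T)\|_{B_zL^2(m)} + Ce^{\gamma T}\!\int_{\tau-T}^\tau \frac{\|w^\xi(s)\|_{B_zL^2(m)} + \|\sdv w^\xi(s)\|_{B_zL^2(m)}}{a(\tau-s)^{3/4}}\,ds,
\]
with $C = C(m,\Lambda)$ independent of $T$ and $\tau$. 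The kernel $a(\tau-s)^{-3/4}$ is integrable on the window, with $\int_0^T a(\sigma)^{-3/4}\,d\sigma \approx T^{1/4} \to 0$ as $T \to 0$, exactly as in the computation \eqref{I1}--\eqref{I2}.

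The key step is the absorption argument. Since Lemma \ref{lem7} already provides $\sup_{\tau<\tau^*}\|\sdv w^\xi(\tau)\|_{B_zL^2(m)} < \infty$, the quantity $N := \limsup_{\tau\to-\infty}\|\sdv w^\xi(\tau)\|_{B_zL^2(m)}$ is finite, and I want to conclude $N=0$. The linear term tends to $0$ as $\tau\to-\infty$ (fixed $T$, and $\|w^\xi(\tau-T)\|\to 0$ by hypothesis), so it contributes nothing to $N$. For the Duhamel term, given $\delta > 0$, both the hypothesis $\|w^\xi(s)\|_{B_zL^2(m)}\to 0$ and the definition of $N$ furnish a threshold below which $\|w^\xi(s)\|_{B_zL^2(m)} < \delta$ and $\|\sdv w^\xi(s)\|_{B_zL^2(m)} < N+\delta$; inserting these uniform bounds over the fixed-length window (so the interchange is harmless) and taking $\limsup_{\tau\to-\infty}$ gives $N \le Ce^{\gamma T}(N+2\delta)\int_0^T a(\sigma)^{-3/4}\,d\sigma$. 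Letting $\delta\to 0$ yields $N \le C e^{\gamma T}\big(\int_0^T a(\sigma)^{-3/4}\,d\sigma\big) N$, and choosing $T$ small enough that the prefactor is $<1$ forces $N=0$, i.e. $\sdv w^\xi \to 0$ in $B_zL^2(m)$, which together with the reduction completes the proof.

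The main obstacle is precisely this self-improvement: since $\|\sdv w^\xi\|$ appears on both sides and we need \emph{decay} rather than a uniform bound, one cannot simply quote Lemma \ref{lem7}. The device of passing to $N = \limsup$ and exploiting the finite uniform bound from Lemma \ref{lem7} to pull the eventually-small constants out of the fixed-length integral is what closes the argument, and the essential point is that $C$ is independent of $T$, so that shrinking $T$ genuinely produces a contractive prefactor.
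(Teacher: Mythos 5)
Your proof is correct and follows essentially the same route as the paper: the same integral equation for $\sdv w^\xi$, the same gradient estimates from Proposition \ref{prop2} combined with the bilinear bound of Lemma \ref{lem12}, and the same divergence-free reduction $\sdz w^z = -\nabla_\xi\cdot w^\xi$. The only difference is in the bookkeeping of the absorption: the paper reuses the weighted-in-time supremum $a(\tau-\tau_0)^{1/2}\|\sdv_\tau w^\xi(\tau)\|$ over the window exactly as in Lemma \ref{lem7}, obtaining a bound on $\|\sdv w^\xi(\tau_0+T)\|$ by $\sup_{[\tau_0,\tau_0+T]}\|w^\xi\|$ alone, whereas you close with a $\limsup$ bootstrap that leans on the finiteness of $\sup_\tau\|\sdv w^\xi(\tau)\|$ supplied by Lemma \ref{lem7}; both are valid.
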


\begin{remark}
Note, that we are not assuming Assumption \ref{assumptioniii} in this lemma, but we still obtain the decay of $\sdz w^z = e^{\tau/2} \partial_z w^z$. The decay of $\sdz w^z$ is sufficient to replace $\partial_z w^z$ bounded for all later steps in the proof of Theorem \ref{thm1}. This emphasizes that the role of Assumption \ref{assumptioniii} is only to gain compactness of the trajectory $\{w^z(\tau)\}$. 
\end{remark}

\begin{proof}[Proof of Lemma \ref{lem6}]
The idea of the proof is to use parabolic regularity as in Lemma \ref{lem7}. We recall $\sdv w^\xi$ satisfies the integral equation
\begin{equation}
\sdv_\tau w^\xi(\tau) = \sdv_\tau S_0(\tau,\tau_0)w^\xi(\tau_0) - \int_{\tau_0}^\tau \sdv_\tau S_0(\tau,s)\left[(v\cdot \sdv_s)w^\xi - (w \cdot \sdv_s)v^\xi\right](s) \ ds,
\end{equation}
for each $\tau_0 \le \tau$. Fixing $\gamma > 0$ and letting $\tau < \tau_0  + T$, we use the gradient estimates on $S_0$ and Lemma \ref{lem12} to obtain the estimate
\begin{equation}
\begin{aligned}
\|\sdv_\tau w^\xi(\tau)\|_{B_zL^2(m)} \le &\frac{Ce^{T\gamma}}{a(\tau-\tau_0)^{1/2}}\|w^\xi(\tau_0)\|_{B_zL^2(m)} + Ce^{T\gamma}\int_{\tau_0}^\tau \frac{\|w^\xi(s)\|_{B_zL^2(m)}}{a(\tau - s)^{3/4}} \ ds\\
 &\quad+ Ce^{T\gamma}\int_{\tau_0}^\tau \frac{\|\sdv_s w^\xi(s)\|_{B_zL^2(m)}}{a(\tau-s)^{3/4}} \ ds,
\end{aligned}
\end{equation}
where $C$ depends only on $m$ and $\Lambda$.
A slight modification of the argument in Lemma \ref{lem7} to account for the additional inhomogeneous terms, yields the bound
\begin{align}\label{ineq1}
\|\sdv w^\xi(\tau_0 + T)\|_{B_zL^2(m)}\le \frac{2Ce^{T\gamma}\left[1 + C^\prime a(T)^{1/2}T^{1/4}\right]}{a(T)^{1/2}}\sup_{\tau_0 < s < \tau_0 + T} \|w^\xi(\tau)\|_{B_zL^2(m)}
\end{align}
where $C^\prime > 0$ is a universal constant and $T > 0$ is small enough such that
$$Ce^{\gamma T}\int_0^T \frac{a(\tau)^{1/2}}{a(s)^{1/2}a(\tau - s)^{3/4}}\ ds < \frac{1}{2}.$$
Inequality (\ref{ineq1}) and $w^\xi(\tau) \rightarrow 0$ in $B_zL^2(m)$ together directly imply $\sdv w^\xi(\tau) \rightarrow 0$ in $B_zL^2(m)$ as desired. We now note that $w^z$ is related to $w^\xi$ via the divergence-free condition,
$$\sdv \cdot w = \nabla_\xi \cdot w^\xi + \sdz w^z = 0.$$
As a consequence, $\nabla_\xi w^\xi \rightarrow 0$ implies that $\sdz w^z \rightarrow 0$ in $B_zL^2(m)$ as claimed.
\end{proof}

\subsection{Step 2: Error Estimates}
First, we estimate the 3d error terms collected in $R(\tau)$ in (\ref{Rdefn1}), which we recall are
\begin{equation}\label{R}
R(\tau) = \left(v^\xi - \left(\nabla^\perp_\xi (\slap)^{-1}w^z\right)\right) \cdot \nabla_\xi w^z + v^z \sdz w^z - w\cdot \sdv v^z.
\end{equation}
Second, we quantitatively bound the difference between the 3d inverse Laplacian $(\slap)^{-1}$ and the 2d inverse Laplacian $\Delta_\xi^{-1}$. Third, we bound the difference between the 2d and 3d linear propagators, namely $e^{\tau\L}$ and $S_0$.

\begin{lemma}\label{lem9}
Under the assumptions of Proposition \ref{prop5} with $m > 2$, the error terms $R(\tau)$ defined in (\ref{R}) satisfy the following decay: 
\begin{equation}
\lim_{\tau \rightarrow -\infty}\|R(\tau)\|_{B_zL^{4/3}(m)} = 0.
\end{equation}
\end{lemma}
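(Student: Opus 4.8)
The plan is to split $R$ into its three constituent terms, exploit an exact cancellation in the first, and then estimate each resulting piece in $B_zL^{4/3}(m)$ by H\"older's inequality (Lemma \ref{lem1}), isolating in every product one factor that vanishes as $\tau \to -\infty$ together with one factor that stays bounded. The bounded factors will come from the parabolic regularity of Lemma \ref{lem7} (boundedness of $\sdv w$) and Assumption \ref{assumptioni}, while the vanishing factors will come from Assumption \ref{assumptionii} and the decay of $\sdv w^\xi$ established in Lemma \ref{lem6}.

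\emph{The structural point} is that the first term of $R$ depends only on $w^\xi$. Using the Biot--Savart structure (\ref{biotsavartstructure}), $v^\xi = \sdz(-\slap)^{-1}(w^\xi)^\perp - \nabla_\xi^\perp(-\slap)^{-1}w^z$, and since $(\slap)^{-1} = -(-\slap)^{-1}$, the two contributions carrying $w^z$ cancel exactly, leaving
$$v^\xi - \nabla_\xi^\perp(\slap)^{-1}w^z = \sdz(-\slap)^{-1}(w^\xi)^\perp.$$
Writing then $R = T_1 + T_2 - T_{3a} - T_{3b}$ with $T_1 = (\sdz(-\slap)^{-1}(w^\xi)^\perp)\cdot\nabla_\xi w^z$, $T_2 = v^z\sdz w^z$, $T_{3a} = (w^\xi\cdot\nabla_\xi)v^z$, and $T_{3b} = w^z\sdz v^z$, I note that $v^z = \nabla_\xi^\perp\cdot(-\slap)^{-1}w^\xi$ also depends only on $w^\xi$, so each term is forced to carry a factor built from $w^\xi$.

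Next I would estimate each term by H\"older with $\tfrac{1}{4/3} = \tfrac14 + \tfrac12$, placing the weight on the $B_zL^2(m)$ factor. For $T_1$, the Biot--Savart bound (\ref{biotsavart}) applied to the vorticity $(w^\xi,0)$ and the embedding $L^2(m)\embeds L^{4/3}$ give $\|\sdz(-\slap)^{-1}(w^\xi)^\perp\|_{B_zL^4}\lesssim\|w^\xi\|_{B_zL^2(m)}$, while $\|\nabla_\xi w^z\|_{B_zL^2(m)}$ is bounded by Lemma \ref{lem7}; Assumption \ref{assumptionii} then sends $T_1 \to 0$. For $T_2$, similarly $\|v^z\|_{B_zL^4}\lesssim\|w^\xi\|_{B_zL^2(m)}\to 0$ against $\|\sdz w^z\|_{B_zL^2(m)}$ bounded (Lemma \ref{lem7}). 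For $T_{3a}$, I pair $\|w^\xi\|_{B_zL^2(m)}\to 0$ (Assumption \ref{assumptionii}) against $\|\nabla_\xi v^z\|_{B_zL^4}\lesssim\|w^\xi\|_{B_zL^4}\lesssim\|\nabla_\xi w^\xi\|_{B_zL^2(m)}$, bounded by Lemma \ref{lem7} via the Riesz bound (\ref{riesz}) and the Sobolev embedding $B_z\dot H^1(m)\embeds B_zL^4$ (Lemma \ref{lem3}).

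The only term whose decaying factor cannot be $w^\xi$ itself is $T_{3b} = w^z\sdz v^z$, since $w^z$ is merely bounded (Assumption \ref{assumptioni}) rather than small. Here I would use that $\sdz v^z$ is a component of $\sdv v$ depending only on $w^\xi$, so (\ref{riesz}) gives $\|\sdz v^z\|_{B_zL^4}\lesssim\|w^\xi\|_{B_zL^4}\lesssim\|\nabla_\xi w^\xi\|_{B_zL^2(m)}$, and this last quantity tends to $0$ by Lemma \ref{lem6} (the decay of $\sdv w^\xi$, which is where Assumption \ref{assumptionii} enters through parabolic regularity). Summing the four bounds yields $\|R(\tau)\|_{B_zL^{4/3}(m)}\to 0$. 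I expect the main obstacle to be the bookkeeping that guarantees every term genuinely carries a small factor: the cancellation in $T_1$ and the reduction of $T_{3b}$ to $\nabla_\xi w^\xi$ rather than $w^\xi$ (since $L^2(m)$ does not control the $L^4$ norm without a derivative) are the two points requiring care.
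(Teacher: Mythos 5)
Your proposal is correct and follows essentially the same route as the paper: the same cancellation reducing the first term to $\sdz(-\slap)^{-1}(w^\xi)^\perp\cdot\nabla_\xi w^z$, the same H\"older/Biot--Savart/Sobolev estimates placing a vanishing $w^\xi$ or $\nabla_\xi w^\xi$ factor against a bounded one, and the same reliance on Proposition \ref{prop5} for both the boundedness of $\sdv w$ and the decay of $\sdv w^\xi$. The only (cosmetic) difference is that the paper bounds the vortex-stretching term $w\cdot\sdv v^z$ in one step by $\|w\|_{B_zL^2(m)}\|\nabla_\xi w^\xi\|_{B_zL^2(m)}$ rather than splitting it into your $T_{3a}$ and $T_{3b}$.
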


\begin{proof}
We split $R$ into three error terms and estimate each separately,
\begin{equation}
R(\tau) = \left(v^\xi - \left(\nabla^\perp_\xi (\slap)^{-1}w^z\right)\right) \cdot \nabla_\xi w^z + v^z \sdz w^z - w\cdot \sdv v^z = r_1 + r_2 + r_3.
\end{equation}
The first term, $r_1$, is roughly the difference between 2d and 3d Biot-Savart laws. The velocity field $v^\xi$ is determined from $w$ via the 3d Biot-Savart law, whereas $\nabla^\perp_\xi(\slap)^{-1}w^z$ is the 2d Biot-Savart law, modulo the difference between $\slap$ and $\Delta$. Thus, using (\ref{biotsavartstructure}), we obtain
\begin{equation}
r_1 = -\sdz(\slap)^{-1}(w^\xi)^\perp \cdot \nabla_\xi w^z.
\end{equation}
We now bound $r_1$ using H\"older's inequality, the boundedness of the Biot-Savart law (\ref{biotsavart}), and the embedding $L^2(m) \embeds L^{4/3}$ for $m > 1$:
\begin{equation}\label{r1}
\begin{aligned}
\|r_1\|_{B_zL^{4/3}(m)} &= \|\sdz(-\slap)^{-1}(w^\xi)^\perp \cdot \nabla_\xi w^z\|_{B_zL^{4/3}(m)}\\
	&\lesssim_m \|\sdz(-\slap)^{-1}(w^\xi)^\perp\|_{B_zL^4}  \|\nabla_\xi w^z\|_{B_zL^2(m)}\\
	&\lesssim_m \|w^\xi\|_{B_zL^{4/3}}\|\nabla_\xi w^z\|_{B_zL^2(m)} \lesssim_m \|w^\xi\|_{B_zL^2(m)}\|\nabla_\xi w^z\|_{B_zL^2(m)}.
\end{aligned}
\end{equation}
Similarly for $r_2$, the additional transport term present in the 3D vorticity equation, we have
\begin{equation}\label{r2}
\begin{aligned}
\|r_2\|_{B_zL^{4/3}(m)} &= \|\sdz w^z \nabla_\xi^\perp \cdot (-\slap)^{-1}(w^\xi)\|_{B_zL^{4/3}(m)}\\
	&\lesssim_m \|\sdz w^z\|_{B_zL^2(m)}\|\nabla_\xi^\perp \cdot (-\slap)^{-1}(w^\xi)\|_{B_zL^4}\\
	&\lesssim_m \|\sdz w^z\|_{B_zL^2(m)}\|w^\xi\|_{B_zL^{4/3}} \lesssim_m \|\sdz w^z\|_{B_zL^2(m)}\|w^\xi\|_{B_zL^2(m)}.
\end{aligned}
\end{equation}
We bound $r_3$, the vortex stretching terms present in the 3D vorticity equation, using H\"older's inequality, the boundedness of the Riesz transforms (\ref{riesz}), and the Sobolev embedding \linebreak$B_z\dot{H}^1(m) \embeds B_zL^4$:
\begin{equation}\label{r3}
\begin{aligned}
\|r_3\|_{B_zL^{4/3}(m)} &= \|w \cdot \sdv v^z\|_{B_zL^{4/3}(m)}\\
	&\lesssim_m \|w\|_{B_zL^2(m)}\|\sdv(\nabla_\xi^\perp \cdot (-\slap)^{-1}w^\xi)\|_{B_zL^4}\\
	&\lesssim_m \|w\|_{B_zL^2(m)}\|w^\xi\|_{B_zL^4} \lesssim_m \|w\|_{B_zL^2(m)}\|\nabla_\xi w^\xi\|_{B_zL^2(m)}.
\end{aligned}
\end{equation}
Combining the estimates (\ref{r1}), (\ref{r2}), and (\ref{r3}) and using Proposition \ref{prop5},
\begin{equation}
\|R(\tau)\|_{B_zL^{4/3}(m)} \lesssim \|w^\xi(\tau)\|_{B_zL^2(m)} + \|\nabla_\xi w^\xi(\tau)\|_{B_zL^2(m)} \rightarrow 0.
\end{equation}
\end{proof}

\begin{lemma}\label{lem10}
Under the assumptions of Proposition \ref{prop5}, with $m > 2$, for any $0 < \delta < 1/2$, we have
\begin{equation}
\|\nabla_\xi^\perp (-\Delta_\xi)^{-1} w^z - \nabla_\xi^\perp (-\slap)^{-1}w^z\|_{B_zL^\infty} \lesssim_{m,\delta} \Lambda^{2\delta}\|\sdz w^z\|_{B_zL^2(m)}^{1-2\delta},
\end{equation}
for each $\tau < \tau^*$.
\end{lemma}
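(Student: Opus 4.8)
The plan is to diagonalize the operator by taking the Fourier transform in $z$ and reduce to a single, $\zeta$-dependent screened Biot--Savart estimate on $\R^2$, then recombine and interpolate. Taking the Fourier transform in $z$ turns $\slap$ into $\Delta_\xi - e^\tau\zeta^2$, so writing $\kappa = e^{\tau/2}\abs{\zeta}$ the operator on the left-hand side acts on each Fourier mode $\widehat{w^z}(\zeta)$ as
$$T_\kappa := \nabla_\xi^\perp(-\Delta_\xi)^{-1} - \nabla_\xi^\perp(-\Delta_\xi + \kappa^2)^{-1},$$
while by definition of the $B_z$ norms one has $\norm{\sdz w^z}_{B_zL^2(m)} = \int_{D^*}\kappa\norm{\widehat{w^z}(\zeta)}_{L^2(m)}\,d\zeta$ and $\norm{w^z}_{B_zL^2(m)} = \int_{D^*}\norm{\widehat{w^z}(\zeta)}_{L^2(m)}\,d\zeta \le \Lambda$. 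Thus everything reduces to the single-mode bound
\begin{equation}
\norm{T_\kappa g}_{L^\infty_\xi} \lesssim_{m,\delta} \kappa^{1-2\delta}\norm{g}_{L^2(m)}, \qquad g = \widehat{w^z}(\zeta),\ \kappa > 0. \label{plan-perfreq}
\end{equation}

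To prove \eqref{plan-perfreq} I would work with the convolution kernel $\Phi_\kappa$ of $T_\kappa$. By scaling $\Phi_\kappa(\xi) = \kappa\,\Phi_1(\kappa\xi)$, where $\Phi_1$ is the difference between the $2$d Biot--Savart kernel $-\tfrac{1}{2\pi}\xi^\perp/\abs{\xi}^2$ and the screened kernel $-\tfrac{1}{2\pi}K_1(\abs{\xi})\,\xi^\perp/\abs{\xi}$ of $\nabla_\xi^\perp(-\Delta_\xi+1)^{-1}$. The technical heart of the argument is the pointwise bound $\abs{\Phi_1(\eta)} \lesssim (1+\abs{\eta})^{-1}$, which encodes two cancellations: near $\eta = 0$ the symbol $ik^\perp/(\abs{k}^2(\abs{k}^2+1))$ is $O(\abs{k}^{-3})$, so the $\abs{\eta}^{-1}$ singularities of the two kernels cancel (equivalently $K_1(r) = r^{-1} + O(r\log r)$) and $\Phi_1$ is bounded near the origin; at infinity the screened kernel decays exponentially and $\Phi_1$ inherits only the $\abs{\eta}^{-1}$ tail of the bare Biot--Savart kernel. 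I expect verifying this decay to be the main obstacle, since it requires tracking the precise short- and long-range behavior of the difference; it can be carried out from the Macdonald-function asymptotics of $K_1$, or by recognizing $T_\kappa$ as exactly the type of weighted screened Biot--Savart operator already estimated in \big[Lemma 3.3; \cite{jacob}\big].

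Granting the kernel bound, \eqref{plan-perfreq} follows by an elementary weighted estimate. Rescaling gives $\abs{\Phi_\kappa(\xi)} \lesssim \kappa/(1+\kappa\abs{\xi})$, and since $\tfrac{1}{1+t} \le t^{-2\delta}$ for all $t > 0$ whenever $0 < \delta < 1/2$, we obtain $\abs{\Phi_\kappa(\xi)} \lesssim \kappa^{1-2\delta}\abs{\xi}^{-2\delta}$. Then for any $\xi_0$, Cauchy--Schwarz against the weight yields
$$\abs{T_\kappa g(\xi_0)} \lesssim \kappa^{1-2\delta}\int_{\R^2}\abs{\xi_0-\xi'}^{-2\delta}\abs{g(\xi')}\,d\xi' \le \kappa^{1-2\delta}\norm{g}_{L^2(m)}\left(\int_{\R^2}\abs{\xi_0-\xi'}^{-4\delta}\brak{\xi'}^{-2m}\,d\xi'\right)^{1/2},$$
and the last integral is bounded uniformly in $\xi_0$ precisely because $4\delta < 2$ (local integrability of the singularity) and $m > 2$ (decay at infinity). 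Finally, inserting \eqref{plan-perfreq} with $\kappa = e^{\tau/2}\abs{\zeta}$ into the $B_zL^\infty$ norm and writing $\kappa^{1-2\delta}\norm{\widehat{w^z}(\zeta)}_{L^2(m)} = \big(\kappa\norm{\widehat{w^z}(\zeta)}_{L^2(m)}\big)^{1-2\delta}\big(\norm{\widehat{w^z}(\zeta)}_{L^2(m)}\big)^{2\delta}$, Hölder's inequality in $\zeta$ with exponents $\tfrac{1}{1-2\delta}$ and $\tfrac{1}{2\delta}$ produces $\norm{\sdz w^z}_{B_zL^2(m)}^{1-2\delta}\norm{w^z}_{B_zL^2(m)}^{2\delta}$; bounding the second factor by $\Lambda^{2\delta}$ gives the stated inequality.
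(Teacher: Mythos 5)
Your proposal is correct and follows essentially the same route as the paper: Fourier transform in $z$, a per-frequency bound $\|T_\kappa g\|_{L^\infty_\xi}\lesssim_{m,\delta}\kappa^{1-2\delta}\|g\|_{L^2(m)}$ with $\kappa = e^{\tau/2}|\zeta|$, and then H\"older's inequality in $\zeta$ with exponents $\tfrac{1}{1-2\delta}$ and $\tfrac{1}{2\delta}$ together with $\|w^z\|_{B_zL^2(m)}\le\Lambda$. The only difference is that the paper imports the per-frequency estimate as a black box from \big[Corollary 3.4; \cite{jacob}\big], whereas you prove it directly from the Bessel-kernel representation of the screened Biot--Savart law; your kernel bound $|\Phi_\kappa(\xi)|\lesssim \kappa(1+\kappa|\xi|)^{-1}$ and the subsequent Cauchy--Schwarz step (using $4\delta<2$ and $m>2$, in fact $m>1$) are sound.
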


\begin{proof}
As in the proof of \big[Corollary 3.4; \cite{jacob}\big] we obtain the pointwise in frequency estimate,
\begin{equation}
\left\|\left[-\nabla^\perp_\xi(|\zeta|^2e^{\tau} - \Delta_\xi)^{-1}\hat w^z\right] - \left[\nabla^\perp_\xi\Delta_\xi^{-1}\hat w^z)\right]\right\|_{L^\infty_\xi} \lesssim_{m,\delta} e^{\tau/2(1-2\delta)}|\zeta|^{1-2\delta}\|\hat w^z\|_{L^2(m)},
\end{equation}
where $\hat w^z(\zeta)$ denotes the Fourier transform of $w^z$ in the $z$ variable. The desired estimate follows from summing over $\zeta$, applying Holder's inequality (in $\zeta$), and recalling for $\tau < \tau^*$, $\|w^z(\tau)\|_{B_zL^2(m)}\le \Lambda$ by assumption.
\end{proof}

\begin{lemma}\label{lem15}
Fix $m > 2$. For any compact set $K_1 \subset [0,\infty) \times B_zL^2(m)$,
\begin{equation}\label{conv1}
\lim_{\tau_0 \rightarrow -\infty} \sup_{(\tau,f) \in K_1} \|S_0(\tau + \tau_0,\tau_0)f - e^{\tau\L}f\|_{B_zL^2(m)} = 0
\end{equation}
and, similarly, for the spatial divergence,
\begin{equation}\label{conv2}
\lim_{\tau_0 \rightarrow -\infty} \sup_{(\tau,f) \in K_2} \|S_0(\tau + \tau_0,\tau_0)\nabla_\xi \cdot f - e^{\tau\L}\nabla_\xi \cdot f\|_{B_zL^2(m)} = 0.
\end{equation}
where $K_2$ is instead taken as a compact subset of $(0,\infty)\times B_zL^{4/3}(m)^2$.
\end{lemma}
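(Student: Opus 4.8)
The plan is to reduce both \eqref{conv1} and \eqref{conv2} to an application of the uniform convergence principle of Lemma \ref{uniformlem}, using the explicit Fourier representation of $S_0$ from Proposition \ref{prop2}. Recall from \eqref{representation} that, taking the Fourier transform in $z$ and using that $e^{\tau\L}$ acts only in $\xi$ (so it commutes with the $z$-Fourier transform), one has for every $f \in B_zL^2(m)$ the pointwise-in-frequency identity
\begin{equation*}
\widehat{\big(S_0(\tau+\tau_0,\tau_0)-e^{\tau\L}\big)f}(\zeta) = \Big(e^{-(e^{\tau+\tau_0}-e^{\tau_0})\zeta^2}-1\Big)\,e^{\tau\L}\widehat f(\zeta),
\end{equation*}
the universal constant in \eqref{representation} being $1$ since the $z$-heat kernel is a probability density. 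Integrating the $L^2(m)$ norm over $\zeta \in D^*$ gives
\begin{equation*}
\big\|\big(S_0(\tau+\tau_0,\tau_0)-e^{\tau\L}\big)f\big\|_{B_zL^2(m)} = \int_{D^*} \Big|e^{-(e^{\tau+\tau_0}-e^{\tau_0})\zeta^2}-1\Big|\,\big\|e^{\tau\L}\widehat f(\zeta)\big\|_{L^2(m)}\,d\zeta,
\end{equation*}
and the whole argument rests on controlling this Fourier multiplier uniformly in $\tau$.

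First I would set $T(\tau,\tau_0) := S_0(\tau+\tau_0,\tau_0)-e^{\tau\L}$ and observe that, writing $A = \pi_1(K_1) \subset [0,\infty)$ and $K = \pi_2(K_1) \subset B_zL^2(m)$ for the projections of $K_1$ onto the two factors, $A$ is compact (hence bounded), $K$ is compact, and $K_1 \subset A\times K$; thus it suffices to prove $\lim_{\tau_0\to-\infty}\sup_{\tau\in A,\,f\in K}\|T(\tau,\tau_0)f\|_{B_zL^2(m)} = 0$, which is exactly the conclusion of Lemma \ref{uniformlem}. To invoke that lemma I need its two hypotheses. Uniform boundedness $\sup_{\tau\in A,\,\tau_0<0}\|T(\tau,\tau_0)\|_{B_zL^2(m)\to B_zL^2(m)} \le M$ follows from the $p=q=2$ regularization estimate \eqref{regularization} for $S_0$ (whose elapsed time is $(\tau+\tau_0)-\tau_0=\tau$, giving a bound $\lesssim e^{\gamma\tau}$) together with the matching bound for $e^{\tau\L}$ from Proposition \ref{prop1}; both are bounded because $\tau$ ranges over the bounded set $A$.

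Second, for the pointwise-in-$f$ convergence I would argue by dominated convergence, uniformly in $\tau$. Since $e^{\tau+\tau_0}-e^{\tau_0}=e^{\tau_0}(e^\tau-1)\ge 0$, the multiplier obeys
\begin{equation*}
\Big|e^{-(e^{\tau+\tau_0}-e^{\tau_0})\zeta^2}-1\Big| \le \min\!\big(1,\,e^{\tau_0}(e^{\tau_{\max}}-1)\zeta^2\big), \qquad \tau_{\max}:=\sup A,
\end{equation*}
a bound independent of $\tau\in A$ that tends to $0$ as $\tau_0\to-\infty$ for each fixed $\zeta$. Combining this with $\|e^{\tau\L}\widehat f(\zeta)\|_{L^2(m)}\lesssim_A \|\widehat f(\zeta)\|_{L^2(m)}$ (again Proposition \ref{prop1}) and $\|\widehat f(\cdot)\|_{L^2(m)}\in L^1(D^*)$, the dominated convergence theorem yields $\sup_{\tau\in A}\|T(\tau,\tau_0)f\|_{B_zL^2(m)}\to 0$ for every $f\in B_zL^2(m)$, in particular on a dense set. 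Lemma \ref{uniformlem} then upgrades this to the uniformity over the compact set $K$ needed for \eqref{conv1}.

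For \eqref{conv2} the argument is identical with $T(\tau,\tau_0)$ replaced by $g\mapsto \big(S_0(\tau+\tau_0,\tau_0)-e^{\tau\L}\big)\nabla_\xi\cdot g$, viewed as operators $B_zL^{4/3}(m)^2\to B_zL^2(m)$; the multiplier identity is unchanged with $e^{\tau\L}\widehat f(\zeta)$ replaced by $e^{\tau\L}\nabla_\xi\cdot\widehat g(\zeta)$. The only new point, and the main subtlety of the proof, is the uniform operator bound: here I must use the divergence regularization estimate \eqref{scalar2} for $S_0$ and the $\nabla_\xi$ gradient estimate of Proposition \ref{prop1} for $e^{\tau\L}$, both of which cost a factor $a(\tau)^{-3/4}$ that is singular as $\tau\to 0$. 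This is precisely why $K_2$ is taken compact in $(0,\infty)\times B_zL^{4/3}(m)^2$: the projection $\pi_1(K_2)$ lies in some $[\tau_{\min},\tau_{\max}]$ with $\tau_{\min}>0$, on which $a(\tau)^{-3/4}$ is bounded, restoring uniform boundedness of the family. With this in hand the dominated-convergence step and the appeal to Lemma \ref{uniformlem} carry over verbatim, completing the proof.
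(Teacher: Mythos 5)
Your proposal is correct, and it reaches the same endpoint as the paper (reduce to Lemma \ref{uniformlem} after establishing a uniform operator bound and convergence on a sufficiently rich set of $f$), but the way you establish the convergence step is genuinely different. The paper writes $S_0(\tau+\tau_0,\tau_0)w - e^{\tau\L}w$ as a Duhamel integral $\int_{\tau_0}^{\tau_0+\tau} e^{(\tau+\tau_0-s)\L}e^s\partial_{zz}S_0(s,\tau_0)w\,ds$ for Schwartz-class $w$, bounds it by $C_w e^{\gamma\tau}(e^\tau-1)e^{\tau_0}$ using $\norm{\partial_{zz}w}_{B_zL^2(m)}$, and then relies on density of the Schwartz class. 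You instead use the explicit Fourier-multiplier form of $S_0$ from \eqref{representation}, so that the difference is the multiplier $e^{-(e^{\tau+\tau_0}-e^{\tau_0})\zeta^2}-1$ acting on $e^{\tau\L}\hat f(\zeta)$, and dominated convergence with the majorant $\min\bigl(1, e^{\tau_0}(e^{\tau_{\max}}-1)\zeta^2\bigr)$ gives the convergence \emph{for every} $f\in B_zL^2(m)$, not merely on a dense set; Lemma \ref{uniformlem} is then needed only to pass from pointwise-in-$f$ to uniform-over-compacts. Your identification of $c=1$ is right (it is forced, e.g., by evaluating at $\zeta=0$, where $S_0$ restricted to $z$-independent data is exactly $e^{(\tau-\tau_0)\L}$), and your treatment of \eqref{conv2} correctly isolates the one real subtlety, namely that the $a(\tau)^{-3/4}$ singularity of the divergence estimates is harmless because $\pi_1(K_2)$ is bounded away from $0$ (the paper instead deduces \eqref{conv2} from \eqref{conv1} on Schwartz data and then uses density in $B_zL^{4/3}(m)$). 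The trade-off: your argument is more elementary and self-contained for $S_0$, precisely because $S_0$ diagonalizes in the $z$-frequency; the paper's Duhamel scheme is the one that survives in Lemma \ref{lem16}, where $S_\alpha$ has no explicit representation and the analogous comparison with $(\Gamma_\alpha,\Tau_\alpha)$ must be done through error integrals.
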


\begin{proof}
Fix $\gamma > 0$, $\tau > 0$, and $w\in \S(\R^2\times \T)$ a Schwartz class function. We begin by noting we have sufficient regularity on $w$ to write $S_0$ as a Duhamel integral of $e^{\tau\L}$. In particular, for any $\tau > \tau_0$,
\begin{equation}
S_0(\tau + \tau_0,\tau_0)w - e^{\tau\L}w = \int_{\tau_0}^{\tau_0 + \tau} e^{(\tau + \tau_0 - s)\L} e^s\partial_{zz}S_0(s, \tau_0)w \ ds.
\end{equation}
Now, we note that $\partial_z$ and $\L + e^{\tau}\partial_{zz}$ commute so that $\partial_z$ commutes with $S_0(\tau,s)$.
Thus, taking norms and using the linear propagator estimates for $S_0$ and $e^{\tau\L}$ in Propositions \ref{prop1} and \ref{prop2}, we obtain the estimate
\begin{equation}
\begin{aligned}
\|S_0(\tau + \tau_0,\tau_0)w - e^{\tau\L}w\|_{B_zL^2(m)} &\lesssim_{m} \left(\int_{\tau_0}^{\tau_0+\tau} e^{\gamma(\tau_0 + \tau - s)}e^se^{\gamma(s-\tau_0)}\ ds\right) \|\partial_{zz}w\|_{B_zL^2(m)}\\
 	&\lesssim_{m,w} e^{\gamma\tau}\left[e^\tau - 1\right]e^{\tau_0}.
\end{aligned}
\end{equation}
This implies (\ref{conv1}) holds uniformly for $\tau$ in a compact subset of $\R$ and $w$ a single Schwartz class function, which are dense in $B_zL^2(m)$. Moreover, Propositions \ref{prop1} and \ref{prop2} give the uniform operator norm bound, 
\begin{equation}
\|S_0(\tau + \tau_0,\tau_0)\|_{B_zL^2(m)\rightarrow B_zL^2(m)} + \|e^{\tau\L}\|_{B_zL^2(m)\rightarrow B_zL^2(m)} \lesssim_m e^{\gamma \tau}.
\end{equation}
Thus, Lemma \ref{uniformlem} implies the convergence (\ref{conv1}).

\vspace{\baselineskip}

Note, the convergence (\ref{conv1}) immediately implies the gradient convergence (\ref{conv2}) holds for a single Schwartz class vector field $f$ and $\tau$ in a compact subset of $\R$. By density of $\S(\R^2\times \T)$ in $B_zL^{4/3}(m)$ and the uniform operator norm bound, 
\begin{equation}
\|S_0(\tau + \tau_0,\tau_0)\div_\xi\|_{B_zL^{4/3}(m)\rightarrow B_zL^2(m)} + \|e^{\tau\L}\div_\xi\|_{B_zL^{4/3}(m)\rightarrow B_zL^2(m)} \lesssim_m \frac{e^{(\gamma - 1/2) \tau}}{a(\tau)^{3/4}},
\end{equation}
we conclude that the convergence holds uniformly on compact subsets of $(0,\infty)\times B_zL^{4/3}(m)^2$.
\end{proof}

\subsection{Step 3: Invariance of the $\alpha$-limit set}


\begin{lemma}\label{lem11}
Suppose $w(\tau)$ is a mild solution to (\ref{mildsvorticity}) satisfying the hypotheses of Proposition \ref{prop5} for some $m > 2$, $\Lambda > 0$, and $\tau^* > -\infty$. Then, for each $2 < m^\prime < m$, the $\alpha$-limit set $\A \subset B_zL^2(m^\prime)$ of $\{w^z(\tau)\}_{\tau < \tau^*}$ is nonempty, and there is a time $T(\A)$ such that for each $0 < \tau < T(\A)$, $\Phi(\tau)\A = \A$ where $\Phi(\tau):B_zL^2(m^\prime)\rightarrow B_zL^2(m^\prime)$ is the flow generated by the 2D vorticity equation (\ref{mild2dform1}).
\end{lemma}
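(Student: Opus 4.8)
The plan is to establish the two defining properties of the $\alpha$-limit set $\A$ — nonemptiness and flow-invariance — by combining the precompactness from Step 1 (Proposition \ref{prop5}) with the asymptotic-equation convergence from Step 2. Nonemptiness is immediate: by Proposition \ref{prop5} the trajectory $\{w^z(\tau)\}_{\tau < \tau^*}$ is precompact in $B_zL^2(m^\prime)$, so any sequence $\tau_n \to -\infty$ admits a convergent subsequence whose limit lies in $\A$, and $\A$ is nonempty (and closed, hence compact). The real content is the invariance $\Phi(\tau)\A = \A$.

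\medskip
\noindent\textbf{Key steps.} First I would fix $w^\infty \in \A$, so there is a sequence $\tau_n \to -\infty$ with $w^z(\tau_n) \to w^\infty$ in $B_zL^2(m^\prime)$. By Lemma \ref{flowlem} the 2d flow $\Phi$ is well-defined on a uniform time interval $[0,T(\A)]$ for initial data in the compact set $\A$ (or a compact set containing the convergent trajectory tails). The goal is to show that for $0 < \tau < T(\A)$, the shifted solution $w^z(\tau_n + \tau)$ converges to $\Phi(\tau)w^\infty$, which would place $\Phi(\tau)w^\infty$ in $\A$ and give $\Phi(\tau)\A \subset \A$. To do this I would write $w^z(\tau_n + \tau)$ via its Duhamel formula \eqref{mildsvorticity} started at time $\tau_n$, and compare it term-by-term with the Duhamel formula for $\Phi(\tau)w^\infty$ built from $e^{\tau\L}$ and the 2d nonlinearity. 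The differences to control are: (a) the linear propagator difference $S_0(\tau_n+\tau,\tau_n) - e^{\tau\L}$, which vanishes uniformly on compact sets by Lemma \ref{lem15}; (b) the replacement of the 3d inverse Laplacian $(\slap)^{-1}$ by $\Delta_\xi^{-1}$ in the velocity, controlled by Lemma \ref{lem10} together with $\|\sdz w^z\|_{B_zL^2(m)} \to 0$ from Lemma \ref{lem6}; (c) the genuinely 3d remainder $R(\tau)$, which vanishes in $B_zL^{4/3}(m)$ by Lemma \ref{lem9}; and (d) the difference in the initial data $w^z(\tau_n) - w^\infty \to 0$, propagated by the linear estimates of Proposition \ref{prop1}. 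A Gr\"onwall argument on the resulting integral inequality, using the smoothing estimates $a(\tau-s)^{-3/4}$ which are integrable, then closes the comparison on $[0,T(\A)]$.

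\medskip
\noindent\textbf{Reverse inclusion and obstacle.} For the reverse inclusion $\A \subset \Phi(\tau)\A$, I would use that each point of $\A$ is a limit of trajectory points and run the same comparison \emph{backwards}: given $w^\infty = \lim w^z(\tau_n)$, the points $w^z(\tau_n - \tau)$ also lie in a precompact set, so along a further subsequence they converge to some $w^{-\tau} \in \A$, and the forward comparison argument applied from time $\tau_n - \tau$ shows $\Phi(\tau)w^{-\tau} = w^\infty$, giving $w^\infty \in \Phi(\tau)\A$. The main obstacle I anticipate is the uniformity needed to pass the comparison to the limit: Lemmas \ref{lem15}, \ref{lem10}, and \ref{lem9} give convergence that must be uniform over the relevant compact sets of data and over $\tau$ in the compact interval $[0,T(\A)]$, and the Gr\"onwall constant must be controlled uniformly in $\tau_n$. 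This is exactly why the precompactness in $B_zL^2(m^\prime)$ (with $m^\prime < m$, so the tails are uniformly controlled) is indispensable — it is what allows Lemma \ref{lem15} and the uniform-convergence machinery of Lemma \ref{uniformlem} to be applied to the actual trajectory rather than just to fixed Schwartz data. Care is also needed because the comparison naturally lives in $B_zL^2(m)$ while compactness is only in $B_zL^2(m^\prime)$, so I would carry the argument in the weaker norm $m^\prime$ while using the uniform $B_zL^2(m)$ bounds from Assumption \ref{assumptioni} as a reservoir of integrability.
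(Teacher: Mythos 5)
Your proposal is correct and follows essentially the same route as the paper: precompactness from Proposition \ref{prop5} gives nonemptiness, the forward inclusion is proved by comparing the Duhamel formulas for $w^z(\tau_n+\tau)$ and $\Phi(\tau)w^\infty$ with the same four error sources (propagator difference via Lemmas \ref{lem15} and \ref{uniformlem}, Biot--Savart replacement via Lemma \ref{lem10}, the 3d remainder via Lemma \ref{lem9}, and the 2d nonlinearity difference closed by a Gr\"onwall/absorption argument on a short interval), and the reverse inclusion is obtained exactly as you describe by extracting a limit of $w^z(\tau_n-\tau)$ and invoking uniqueness of limits.
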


\begin{proof}
First, fix $2 < m^\prime < m$ and note that by the compactness statement from Proposition \ref{prop5}, we have already shown $\A \subset B_zL^2(m^\prime)$ is nonempty and compact. Therefore, by Lemma \ref{flowlem}, there is a time $T(\A) > 0$ such that (\ref{mild2dform1}) generates a well-defined flow $\Phi(\tau)$ on $\A$ for $0 < \tau < T(\A)$.

Second, we prove the inclusion $\Phi(\tau)\A \subset \A$ for any $\tau < T(\A)$. Pick $w^\infty \in \A \subset B_zL^2(m^\prime)$. Then, by the definition of $\A$, there exists a sequence $\tau_n \rightarrow -\infty$ such that $w^z(\tau_n) \rightarrow w^\infty$ in $B_zL^2(m^\prime)$. Define $w^\infty(\tau) = \Phi(\tau)w^\infty$. We claim that for each $0 < \tau < T(\A)$,
$$\lim_{n\rightarrow \infty} \|w^z(\tau + \tau_n) - w^\infty(\tau)\|_{B_zL^2(m^\prime)}.$$
Also, let $0< T_1 < T_2 < T(\A)$ be arbitrary times and fix $\gamma > 0$.

We recall that the functions $w^z(\tau+\tau_n)$ satisfy the following integral equations with initial data at time $t_n$:
\begin{equation}\label{eqn5}
\begin{aligned}
w^z&(\tau + \tau_n) = S_0(\tau+\tau_n,\tau_n)w^z(\tau_n)\\
	 - &\int_{\tau_n}^{\tau+\tau_n} S_0(\tau+\tau_n,s)\left[R + (\nabla_\xi^\perp(\slap)^{-1}w^z - \nabla_\xi^\perp\Delta_\xi^{-1}w^z) \cdot \nabla_\xi w^z + \nabla_\xi^\perp\Delta_\xi^{-1}w^z \cdot \nabla_\xi w^z\right] (s) \ ds\\
\end{aligned}
\end{equation}
where $R(s)$ are the error terms defined in (\ref{R}).

Also, recall $w^\infty(\tau)$ satisfies the integral form of (\ref{svorticity2}), which we write as
\begin{equation}
w^\infty(\tau) = e^{\tau\L}w^\infty - \int_0^\tau e^{(\tau-s)\L} \left[\nabla_\xi^\perp\Delta_\xi^{-1}w^\infty \cdot \nabla_\xi w^\infty\right](s) \ ds.
\end{equation}
Changing variables in (\ref{eqn5}) and using
$$\div_\xi \nabla_\xi^\perp\Delta_\xi^{-1}w^z = \div_\xi \nabla_\xi^\perp\Delta_\xi^{-1}w^\infty = 0,$$
we expand the difference between the two equations as
\begin{equation}
\begin{aligned}
w^z(\tau + \tau_n) -& w^\infty(\tau) = \left[S_0(\tau + \tau_n,\tau_n)w^z(\tau_n) - e^{\tau\L}w^\infty\right]\\
	&- \int_0^{\tau} S_0(\tau+\tau_n,s + \tau_n)\left[R + (\nabla_\xi^\perp(\slap)^{-1}w^z - \nabla^\perp_\xi\Delta_\xi^{-1}w^z) \cdot \nabla_\xi w^z\right](s + \tau_n) \ ds\\
	&+ \int_0^\tau \left(e^{(\tau -s)\L} - S_0(\tau + \tau_n,s + \tau_n)\right)\left[\nabla_\xi \cdot (w^z \nabla^\perp_\xi\Delta_\xi^{-1}w^z)(s + \tau_n)\right] \ ds\\
	&+ \int_0^\tau e^{(\tau -s)\L}\left[\nabla_\xi \cdot (w^\infty \nabla_\xi^\perp\Delta_\xi^{-1}w^\infty(s))\right]\\
	&\qquad\qquad- e^{(\tau-s)\L}\left[\nabla_\xi \cdot (w^z \nabla_\xi^\perp\Delta_\xi^{-1}w^z(s + \tau_n))\right] \ ds\\
	&=: L_n(\tau) + E^1_n(\tau) + E_n^2(\tau) + I_n(\tau),
\end{aligned}
\end{equation}
where $L_n(\tau)$ corresponds to the difference in the linear evolution operators, $E_n^1(\tau)$ and $E_n^2(\tau)$ are the error terms arising from treating the 3d vorticity equation as a perturbation of the 2d vorticity equation, and $I_n(\tau)$ is the core term corresponding to the difference between the 2d transport terms from $w^z$ and $w^\infty$. We bound each term in $B_zL^2(m^\prime)$ individually.

First, we bound the linear contribution $L_n(\tau)$ using the linear propagator estimate in Proposition \ref{prop2}:
\begin{equation}
\begin{aligned}
\|L_n(\tau)\|_{B_zL^2(m^\prime)} &\le \|S_0(\tau + \tau_n,\tau_n)w^z(\tau_n) - S_0(\tau + \tau_n,\tau_n)w^\infty\|_{B_zL^2(m^\prime)}\\
	 &\quad+ \|S_0(\tau + \tau_n,\tau_n)w^\infty - e^{\tau\L}w^\infty\|_{B_zL^2(m^\prime)}\\
	 &\lesssim_{m^\prime} e^{\tau\gamma}\|w^z(\tau_n) -w^\infty\|_{B_zL^2(m^\prime)} + \|S_0(\tau + \tau_n,\tau_n)w^\infty - e^{\tau\L}w^\infty\|_{B_zL^2(m^\prime)}.
\end{aligned}
\end{equation}
The first term decays uniformly for $\tau \in [T_1,T_2]$ by the assumption that $w^z(\tau_n) \rightarrow w^\infty$ in $B_zL^2(m^\prime)$. The second term decays uniformly for $\tau \in [T_1,T_2]$ by Lemma \ref{lem15}. Therefore, $L_n(\tau) \rightarrow 0$ in $B_zL^2(m^\prime)$ uniformly for $\tau\in [T_1,T_2]$.

Second, for $E_n^1$, using the linear propagator estimate in Proposition \ref{prop2} and H\"older's inequality, we have,
\begin{equation}
\begin{aligned}
\|E_n^1(\tau)\|_{B_zL^2(m^\prime)} &\lesssim_{m^\prime} \int_0^\tau \left(\frac{e^{\gamma(\tau-s)}}{a(\tau - s)^{1/4}}\|R(s + \tau_n)\|_{B_zL^{4/3}(m^\prime)}\right) \ ds\\
	&+ \int_0^\tau e^{\gamma(\tau-s)}\left(\|\nabla_\xi^\perp(\slap)^{-1}w^z - \nabla_\xi^\perp\Delta_\xi^{-1}w^z\|_{B_zL^\infty}\|\nabla_\xi w^z\|_{B_zL^2(m^\prime)}\right)(s +\tau_n) \ ds.
\end{aligned}
\end{equation}
Now, since $\|\nabla_\xi w^z(s)\|_{B_zL^2(m^\prime)}\lesssim_{m^\prime,\Lambda} 1$ by Proposition \ref{prop5} and 
$$\lim_{\tau \rightarrow -\infty}\|R(\tau)\|_{B_zL^{4/3}(m^\prime)} = 0 \qquad \text{and}\qquad  \lim_{\tau\rightarrow -\infty}\|\nabla_\xi^\perp(\slap_\tau)^{-1}w^z(\tau) - \nabla_\xi^\perp\Delta_\xi^{-1}w^z(\tau)\|_{B_zL^\infty} = 0$$
by Lemma \ref{lem9} and Lemma \ref{lem10}, $E^1_n(\tau)\rightarrow 0$ in $B_zL^2(m^\prime)$, uniformly for $\tau\in [T_1,T_2]$.

Third, to estimate $E_n^2$,
we will use the strong convergence of the operators given in Lemma \ref{lem15} as we did for $L_n(\tau)$. However, we note we must verify that $\{w^z(s)\nabla_\xi^\perp\Delta_\xi^{-1} w^z(s)\}_{s < \tau^*}$ is precompact in $B_zL^{4/3}(m)^2$. To that end, define the bilinear map 
$B(f,g) = f\nabla_\xi^\perp\Delta_\xi^{-1}g$ for $f,g\in B_zL^2(m^\prime)$. Then, using H\"older's inequality and properties of the 2d Biot-Savart law together with Lemma \ref{extensionlem}, we see there exists $C = C(m^\prime) > 0$ such that 
\begin{equation}\label{bilinear}
\|B(f,g)\|_{B_zL^{4/3}(m^\prime)} \le C\|f\|_{B_zL^2(m^\prime)}\|g\|_{B_zL^2(m^\prime)},
\end{equation}
which implies $B:B_zL^2(m^\prime)^2 \rightarrow B_zL^{4/3}(m^\prime)^2$ is continuous. Thus, since the product of compact sets is compact and the image of compact sets under a continuous map is compact, $\{w^z(s)\nabla_\xi^\perp\Delta_\xi^{-1} w^z(s)\}_{s < \tau^*}$ is precompact in $B_zL^{4/3}(m)^2$. Now, Lemma \ref{lem15} implies the sequence of functions $f_n:\set{0 \le s < \tau < \infty} \rightarrow B_zL^2(m^\prime)$, defined by
$$f_n(s,\tau) = \left(e^{(\tau -s)\L} - S_0(\tau + \tau_n,s + \tau_n)\right)\left[\nabla_\xi \cdot (w^z \nabla^\perp_\xi\Delta_\xi^{-1}w^z)(s + \tau_n)\right],$$
converges to $0$ uniformly on compact subsets of $\set{0 \le s < \tau < \infty}$. Therefore, $E^2_n(\tau)\rightarrow 0$ in $B_zL^2(m^\prime)$ uniformly for $\tau \in [T_1,T_2]$.

Fourth, we estimate $I_n(\tau)$ using the gradient bound on $e^{\tau\L}$ in Proposition \ref{prop1} and (\ref{bilinear}) to obtain
\begin{equation}
\begin{aligned}
\|I_n\|_{B_zL^2(m^\prime)} &\lesssim_{m^\prime} \int_0^\tau \frac{e^{(\gamma - 1/2)(\tau -s)}}{a(\tau - s)^{3/4}}\bigg(\left\|\left(w^\infty(s) - w^z(s + \tau_n)\right)\nabla_\xi^\perp\Delta_\xi^{-1}w^\infty(s)\right\|_{B_zL^{4/3}(m^\prime)} \\
	&\qquad +\left\|w^z(s+\tau_n)\nabla_\xi^\perp\Delta_\xi^{-1}\left(w^\infty(s) - w^z(s + \tau_n)\right)\right\|_{B_zL^{4/3}(m^\prime)}\bigg) \ ds\\
	&\lesssim_{m^\prime,\Lambda} \int_0^\tau \frac{e^{(\gamma - 1/2)(\tau -s)}}{a(\tau - s)^{3/4}}\|w^\infty(s) - w^z(s + \tau_n)\|_{B_zL^2(m^\prime)} \ ds.
\end{aligned}
\end{equation}

Summarizing, we obtain a constant $K = K(m^\prime,\Lambda) > 0$ and nonnegative functions $\epsilon_n(\tau)$ such that $\epsilon_n(\tau)\rightarrow 0$ uniformly for $\tau\in [T_1,T_2]$ and
\begin{equation}
\|w^z(\tau + \tau_n) - w^\infty(\tau)\|_{B_zL^2(m^\prime)} \le \epsilon_n(\tau) + K\int_0^\tau \frac{e^{(\gamma - 1/2)(\tau -s)}}{a(\tau - s)^{3/4}}\|w^z(s + \tau_n) - w^\infty(s)\|_{B_zL^2(m^\prime)}\ ds.
\end{equation}
Therefore, taking $T_2$ sufficiently small so that
$$K\sup_{0 < \tau < T_2}\int_0^\tau \frac{e^{(\gamma - 1/2)(\tau -s)}}{a(\tau - s)^{3/4}} \ ds \le \frac{1}{2},$$
we obtain
$$\sup_{T_1\le \tau \le T_2}\|w^z(\tau+\tau_n) - w^\infty(\tau)\|_{B_zL^2(m^\prime)} \le 2\sup_{T_1\le \tau \le T_2}\epsilon_n(\tau) \rightarrow 0.$$
However, $T_1 > 0$ is still arbitrary, and hence for $\tau\in[0,T_2]$ with $T_2$ sufficiently small depending only on $\Lambda$ and $m^\prime$, $w^\infty(\tau) \in \A$ and $\Phi(\tau)\A \subset \A$ for $0 \le \tau \le T_2$. By the semigroup property, we certainly have $\Phi(\tau)\A \subset \A$ for all $0\le \tau \le T(A)$.

To prove the reverse inclusion, $\A \subset \Phi(\tau)\A$, we follow a similar argument. Suppose $w^z(\tau_n) \rightarrow w^\infty$ and $\tau_n \rightarrow -\infty$. Then, by the precompactness of $\{w^z(s)\}_{s < \tau^*}$, up to a subsequence, there is a $w_\tau^\infty \in \A$ such that $w^z(\tau_n - \tau) \rightarrow w^\infty_\tau$. The above argument implies that $w^z(\tau_n) \rightarrow \Phi(\tau)w^\infty_\tau$. By the uniqueness of limits, we conclude $w^\infty = \Phi(\tau)w^\infty_\tau \in \Phi(\tau)\A$. Therefore, for each $0 \le \tau\le T(\A)$, $\Phi(\tau)\A = \A$ and the proof is complete.
\end{proof}

\subsection{Step 4: Rigidity}

\begin{proof}[Proof of Theorem \ref{thm1}]
By assumption, there exist $\Lambda > 0$, $m > 2$ and $\tau^* > -\infty$ such that
$$\sup_{\tau < \tau^*} \|w\|_{B_zL^2(m)} \le \Lambda,$$
and
$$\sup_{\tau < \tau^*} \|\partial_z w^z\|_{B_zL^2(m)} < \infty.$$
Fix $2 < m^\prime < m$, so that by Proposition \ref{prop5} and Lemma \ref{lem11}, if $\A \subset B_zL^2(m^\prime)$ the $\alpha$-limit set of $\{w^z(\tau)\}_{\tau < \tau^*}$, then $\A$ is compact, nonempty, and $\Phi(s)\A = \A$ for each $0 < s < T(\A)$. We will now show that $\A = \{\alpha G\}$.

Let $\pi_z:B_zL^2(m^\prime) \rightarrow L^2(m^\prime)$ denote the projection of a function onto its $z$-th slice, i.e. 
$$\pi_z(f)(\xi) = f(\xi,z)$$
and let $\A(z) \subset L^2(m^\prime)$ denote $\pi_z(\A)$ onto the $z$-th slice. Note, $\pi_z$ is a well-defined bounded linear operator by the embedding $B_zX \embeds C_z(\T;X)$. Then, by Lemma \ref{flowlem}, $\Phi(\tau)$ commutes with projections in the sense that 
$$\pi_z \circ \Phi(\tau) = \phi(\tau) \circ \pi_z,$$
where $\phi(\tau)$ is the flow generated by (\ref{svorticity2}) on $L^2(m^\prime)$. Thus, by the semigroup property for $\phi$, $\phi(\tau)(\A(z)) = \A(z)$ for each $\tau \ge 0$.
Hence, by Theorem \ref{thm4}, for each $z$, $\A(z)\subset \{\beta G\ | \ \beta\in \R\}$.

Now, if $w^\infty \in \A$, then for every $z$, $w^\infty(\cdot,z)$ has finite $L^2(m^\prime)$ norm and is contained in $\A(z)$. Therefore, $w^\infty(\cdot,z) = \beta(z)G(\cdot)$ for each $z$. Finally, using $\omega(t) \weakstar \alpha\delta_{\{x = 0\}}$ as $t\rightarrow 0^+$, we conclude $\beta(z) = \alpha$ and $\A = \{\alpha G\}$.
By the compactness of $\{w^z(\tau)\}_{\tau < \tau^*}$, this is equivalent to $w^z(\tau) \rightarrow \alpha G$ in $B_zL^2(m^\prime)$.

Combined with $w^\xi \rightarrow 0$ in $B_zL^2(m)$, we have shown
$$\lim_{\tau \rightarrow -\infty} \left\|w - \begin{bmatrix} 0\\ 0\\ \alpha G\end{bmatrix}\right\|_{B_zL^2(m^\prime)} = 0,$$
and therefore $w$ lies within the uniqueness class shown in \cite{jacob}. Thus, there is a unique mild solution satisfying the hypotheses of Theorem \ref{thm1}. Moreover, since the Oseen vortex $\omega^g$ satisfies the assumptions of Theorem \ref{thm1}, we must have $w = w^g$. 
\end{proof}

\begin{remark}
An alternative argument that $\beta(z)$ is constant has been offered by one of the reviewers. In particular, take $\tau_n$ a sequence of times such that $\tau_n \rightarrow -\infty$. Then, using the divergence-free condition on $w^z$,
\begin{equation}
e^{\tau_n/2}\partial_z\int w^z(\tau_n,\xi,z) \ d\xi = -\int \nabla_\xi \cdot w^z(\tau_n,\xi,z) \ d\xi = 0,
\end{equation}
for each $z$ and $n$. Therefore, $\int w^z(\tau_n,\xi,z) \ d\xi$ is constant in $z$ and taking a limit in $n$ yields $\beta(z)$ is constant.
\end{remark}


\subsection{Proof of Corollary}

\begin{proof}[Proof of Corollary \ref{cor1}]
By assumption, there exist $\Lambda > 0$, $m > 2$ and $\tau^* > -\infty$ such that
$$\sup_{\tau < \tau^*} \|w\|_{B_zL^2(m)} \le \Lambda,$$
and
$$\sup_{\tau < \tau^*} \|\partial_z w\|_{B_zL^2(m)} < \infty.$$
By Lemma \ref{lem7}, there is a constant $C > 0$, such that $\|\sdv w^\xi(\tau)\|_{B_zL^2(m)} \le C(\Lambda,m)$. Therefore, fixing $2 < m^\prime < m$, by the same argument as in Lemma \ref{lem8}, using $\|\partial_z w^\xi\|_{B_zL^2(m)} < \infty$, we conclude that $\{w^\xi(\tau)\}_{\tau < \tau^*}$ is precompact as a subset of $B_zL^2(m^\prime)$. Taking any sequence $w^\xi(\tau_n)$, there is a further sequence which converges strongly in $B_zL^2(m^\prime)$ and weakly to $0$ in $B_zL^2(m)$. By weak-strong uniqueness of limits, $w^\xi(\tau_n) \rightarrow 0$ in $B_zL^2(m^\prime)$. Thus, $$\lim_{\tau \rightarrow -\infty} \|w^\xi(\tau)\|_{B_zL^2(m^\prime)} = 0.$$ Applying Theorem \ref{thm1} completes the proof.
\end{proof}


\section{Proof of Theorem 2}

\subsection{Proof Sketch}

There are a few modifications we need to make to the proof of Theorem \ref{thm1}. We expect our solution to converge to $w^g = (0,0,\alpha G)^t$. However, $w^g$ does not decay at infinity in the $z$-direction. In fact, since $w^g$ is $z$-independent, the Fourier transform of $w^g$ in the $z$-direction is a Dirac mass centered at the origin. While this has finite total variation norm, it does not have finite $L^1$ norm and $w^g$ is not contained in $B_zL^2(m)$. 
Therefore, we analyze solutions for which $w - w^g \in B_zL^2(m)$ and treat the vorticity equation for $w$ as a perturbation of the vorticity equation linearized about $w^g$.

Taking $\omega:\R^3\times (0,\infty) \rightarrow \R^3$ a mild solution with circulation parameter $\alpha$, we define the core correction piece $\omega_c = \omega - \omega^g$. This induces a corresponding decomposition in self-similar coordinates,
$$w = w_c + w^g, \qquad v = v_c + v^g$$
where $w_c$ satisfies the vector-valued PDE
\begin{equation}\label{core}
\begin{aligned}
\partial_\tau w^\xi_c + \alpha (v^g \cdot \nabla_\xi)w^\xi_c - \alpha(w^\xi_c \cdot \nabla_\xi)v^g- (\L + \sdz^2)w^\xi_c  &= - (v_c \cdot \sdv) w^\xi_c + (w_c \cdot \sdv)v^\xi_c\\
\partial_\tau w^z_c + \alpha (v^g \cdot\nabla_\xi) w^z_c + \alpha \nabla_\xi G \cdot v^\xi_c - \alpha G\sdz v^z_c - (\L + \sdz^2)w^z_c &= -(v_c \cdot \sdv) w^z_c + (w_c \cdot \sdv) v^z_c. 
\end{aligned}
\end{equation}
We have placed terms we wish to treat perturbatively on the right hand side. The left hand side of the above equations is exactly the self-similar vorticity equations linearized about $\alpha G$. The linearized equations generate a strongly continuous flow $S_\alpha(\tau,\tau_0)$ on $B_zL^2(m)^3$ by Proposition \ref{prop2}. Thus, we can rewrite (\ref{core}) as the following integral equation: for each $0\le \tau_0 < \tau$,
\begin{equation}\label{coremild}
w_c(\tau) = S_\alpha(\tau,\tau_0)w_c(\tau_0) - \int_{\tau_0}^\tau S_\alpha(\tau,s)\left[v_c \cdot \sdv_s w_c - w_c \cdot \sdv_s v_c\right](s) \ ds
\end{equation}
where the integral converges in $B_zL^2(m)$. In particular, we obtain the following equation for $w^z_c$:
\begin{equation}\label{corezmild}
w_c^z(\tau) = S_\alpha^z(\tau,\tau_0)w_c(\tau_0) - \int_{\tau_0}^\tau S_\alpha^z(\tau,s)\left[v_c \cdot \sdv_s w_c - w_c \cdot \sdv_s v_c\right](s) \ ds,
\end{equation}
where, because $S_\alpha$ is not diagonal, we do not obtain an equation involving just terms from the differential form of the $w^z_c$ equation. Now, formally throwing away terms in the $S_\alpha$ equation with a $\sdz$ and replacing $\slap$ with $\Delta_\xi$, we expect $S_\alpha^z f \rightarrow \Tau_\alpha f^z$, where $\Tau_\alpha$ is the semigroup generated by the 2d vorticity equation linearized about $\alpha G$, namely,
\begin{equation}
\partial_\tau f + \alpha v^g\cdot \nabla_\xi f + \alpha (\nabla_\xi^\perp\Delta_\xi^{-1}f) \cdot \nabla_\xi G - \L f = 0.
\end{equation}
Therefore, replacing $S_\alpha^z$ by $\Tau_\alpha$ in (\ref{corezmild}) and separating out two types of error terms, we obtain an equation analogous to (\ref{heuristiceqn}):
\begin{equation}
w_c^z(\tau) = \Tau_\alpha(\tau - \tau_0)w_c^z(\tau_0) - \int_{\tau_0}^\tau \Tau_\alpha(\tau-s)\left[\left(\nabla_\xi^\perp(\slap)^{-1} w_c^z\cdot \nabla_\xi w_c^z\right) + R^\prime\right](s) \ ds + \mathcal{E}(\tau,\tau_0),
\end{equation}
where $\mathcal{E}(\tau,\tau_0)$ is the error term due to replacing $S_\alpha^z$ with $\Tau_\alpha$ and $R^\prime$ is the error term analagous to $R$, defined in (\ref{R}), given by
\begin{equation}
R^\prime(s) = \div_\xi(v^\xi_c w_c^z - w_c^\xi v^z_c) - \nabla_\xi^\perp(\slap)^{-1}w_c^z \cdot \nabla_\xi w_c^z.
\end{equation}
We will show both $\mathcal{E}$ and $R^\prime$ converge to $0$ in some appropriate sense and justify that $w_c^z$ asymptotically solves the 2d vorticity equation in the form
\begin{equation}\label{mildlimit}
w^*(\tau) = \Tau_\alpha(\tau-\tau_0)w^*(\tau_0) - \int_{\tau_0}^\tau \Tau_\alpha(\tau-s)[\nabla_\xi^\perp \Delta_\xi^{-1}w^*(s) \cdot \nabla_\xi w^*(s)]\ ds.
\end{equation}
This will allow us to apply a similar rigidity argument as in the proof of Theorem \ref{thm1}.
By parabolic regularity, $\|w_c(\tau)\|_{B_zL^2(m)} \lesssim 1$ is enough to justify the following lemma:
\begin{lemma}\label{mildlemma}
Let $\omega$ be as in the statement of Theorem \ref{thm2}. Then, $w_c$ defined by $w - w^g$ satisfies (\ref{coremild}).
\end{lemma}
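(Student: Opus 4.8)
The plan is to derive (\ref{coremild}) from the original mild formulation (\ref{mild}) by subtracting off the Oseen vortex and then resumming the resulting linear terms into the semigroup $S_\alpha$. The starting observation is that the Oseen vortex (\ref{explicit}) is nothing but the heat evolution of the initial data. A direct computation shows $\nabla \cdot [u^g \tens \omega^g - \omega^g \tens u^g] = 0$: from the conventions, $(\nabla\cdot(u^g\tens\omega^g))_i = \partial_j(u^{g,i}\omega^{g,j})$ vanishes because $\omega^g$ has only a $z$-component and is $z$-independent, while the only surviving component of $\nabla\cdot(\omega^g\tens u^g)$ is proportional to $\nabla_\xi \cdot (G v^G) = v^G \cdot \nabla_\xi G + G\,\nabla_\xi\cdot v^G = 0$, since $v^G$ is azimuthal (hence $\perp \nabla_\xi G$) and divergence-free. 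Thus $\omega^g(t) = e^{t\Delta}\omega_0$ and the nonlinearity in (\ref{mild}) is identically zero for $\omega^g$.

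First I would subtract the mild equation for $\omega^g$ from that for $\omega$. The free terms $e^{t\Delta}\omega_0$ cancel, leaving
\[
\omega_c(t) = -\int_0^t e^{(t-s)\Delta}\nabla \cdot \left[u \tens \omega - \omega \tens u\right](s)\,ds.
\]
Expanding $u = u^g + u_c$, $\omega = \omega^g + \omega_c$ and discarding the (divergence-free) pure-Oseen contribution, the bracket splits into terms linear in $(u_c,\omega_c)$ — the cross terms carrying exactly one Oseen factor — plus the genuinely quadratic term $u_c \tens \omega_c - \omega_c \tens u_c$. Passing to self-similar coordinates via (\ref{rescaling}), the heat semigroup becomes $S_0$, the linear cross terms become precisely the Oseen-linearization terms appearing on the left-hand side of (\ref{core}) (equivalently, the generator of $S_\alpha$ displayed in (\ref{Salphadefn})), and the quadratic term becomes $v_c \cdot \sdv w_c - w_c \cdot \sdv v_c$. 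This is the rigorous version of the formal passage described in the sketch.

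Next I would absorb the linear terms into the semigroup via the variation-of-constants relation between $S_0$ and $S_\alpha$. Writing the generator of $S_\alpha$ as $(\L + \sdz^2) + B_\alpha$, where $B_\alpha$ collects the Oseen-linearization terms matched above, the Duhamel identity
\[
S_\alpha(\tau,\tau_0) = S_0(\tau,\tau_0) + \int_{\tau_0}^\tau S_0(\tau,s)\,B_\alpha\, S_\alpha(s,\tau_0)\,ds,
\]
which is exactly the representation of $S_\alpha$ built in [Proposition 3.2 and Lemma 3.6; \cite{jacob}] and recalled in Proposition \ref{prop2}, converts the $S_0$-based identity into (\ref{coremild}), leaving only the quadratic term as the inhomogeneity.

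Finally, to make these manipulations rigorous at the level of the integral equations (rather than merely formally), I would verify that the quadratic nonlinearity lies in a space for which the Bochner integral in (\ref{coremild}) converges in $B_zL^2(m)$. Using the uniform bound $\sup_{\tau<\tau^*}\norm{w_c(\tau)}_{B_zL^2(m)} \lesssim 1$ from hypothesis (i) of Theorem \ref{thm2}, together with H\"older's inequality (Lemma \ref{lem1}), the self-similar Biot--Savart bounds (Lemma \ref{lem2}), and the embeddings of Lemma \ref{lem3}, one estimates $\norm{v_c \cdot \sdv w_c - w_c \cdot \sdv v_c}_{B_zL^{4/3}(m)} \lesssim \norm{w_c}_{B_zL^2(m)}\norm{\sdv w_c}_{B_zL^2(m)}$ exactly as in the proofs of Lemmas \ref{lem7} and \ref{lem12}; the regularization estimates (\ref{vector1})--(\ref{vector3}) for $S_\alpha$ then control the singularity near $s=\tau$ and yield convergence of the integral (and strong continuity of $\tau \mapsto w_c(\tau)$ in $B_zL^2(m)$). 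The main obstacle is bookkeeping rather than analysis: one must track that every linear Oseen term — each of which naively involves $w^g \notin B_zL^2(m)$ on $\R^2 \times \R$ — is only ever felt through the operator $S_\alpha$, which is bounded on $B_zL^2(m)$ by Proposition \ref{prop2}, so that the entire identity (\ref{coremild}) is well-posed within $B_zL^2(m)$ despite $w^g$ lying outside it.
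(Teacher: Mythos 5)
Your proposal is correct and supplies exactly the argument the paper leaves implicit: Lemma \ref{mildlemma} is stated without proof, justified only by the remark that parabolic regularity together with $\|w_c(\tau)\|_{B_zL^2(m)} \lesssim 1$ suffices, and your three steps (cancellation of $e^{t\Delta}\omega_0$ against the Oseen vortex since its nonlinearity vanishes, matching of the cross terms with the generator of $S_\alpha$ in (\ref{Salphadefn}), and resummation via the Duhamel relation between $S_0$ and $S_\alpha$ from \cite{jacob}) are the intended ones. One refinement worth making: since the gradient bound of Lemma \ref{lem18} is itself derived \emph{from} (\ref{coremild}), you should keep the quadratic term in divergence form $\divz_s\left(v_c \tens w_c - w_c \tens v_c\right)$ and invoke (\ref{vector2}) with $q = 4/3$, so that convergence of the Bochner integral requires only the $B_zL^2(m)$ bound on $w_c$ (via H\"older and the Biot--Savart estimate (\ref{biotsavart})) rather than a bound on $\sdv w_c$, avoiding any appearance of circularity.
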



Now, we outline the proof of Theorem \ref{thm2}. The proof of Theorem \ref{thm2} will follow the same structure as the proof of Theorem \ref{thm1}, but with additional difficulties stemming from linearizing about $\alpha G$. We proceed in four steps:
\begin{itemize}
\item First, we will analyze mild solutions to (\ref{coremild}) and use parabolic regularity theory to show that the trajectory $\{w^z_c(\tau)\}_{\tau < \tau^*}$ is precompact in $B_zL^2(m)$. This will imply that the $\alpha$-limit set $\A\subset B_zL^2(m)$ is nonempty. The main differences with the analogous step in the proof of Theorem \ref{thm1} are in the proof that $\sdv w^\xi \rightarrow 0$ and in showing compactness of $\{w^z\}_{\tau < \tau^*}$.
\item Second, we will show that the remainder terms $R^\prime(\tau)$ vanish as $\tau \rightarrow -\infty$. This step is nearly identical to the analogous step in the proof of Theorem \ref{thm1}.
\item Third, we will use Steps 1 and 2 to show that the $\alpha$-limit set $\A$ is invariant under the flow $\Phi^\prime(\tau)$ generated by the 2d vorticity equation in the form (\ref{mildlimit}).
\item Fourth, we will show that this implies $\A$ is invariant under the flow $\Phi(\tau)$ generated by the 2d vorticity equation (\ref{svorticity2}) and conclude by the rigidity of invariant sets of $\Phi$ that $\A = \{0\}$.
\end{itemize}

\subsection{Semigroup Asymptotics}

Before beginning to analyze the regularity properties of mild solutions to (\ref{coremild}), we study the asymptotic behavior of $S_\alpha$. Throughout the proof of Theorem \ref{thm1}, we studied the equations for $w^z$ and $w^\xi$ separately and deduced properties of mild solutions based on estimates of the form
\begin{equation}\label{diagest}
\|S_0(\tau,\tau_0)w^\xi\|_{B_zL^2(m)} \lesssim e^{\gamma \tau}\|w^\xi\|_{B_zL^2(m)},
\end{equation}
where only $w^\xi$ appears on the right hand side. However, this type of estimate is not true for $S_\alpha$, as $S_\alpha$ is not block diagonal in the sense that
\begin{equation}\label{failure}
S_\alpha w_c \neq \begin{bmatrix}
S_\alpha^\xi & 0\\
0 & S_\alpha^z
\end{bmatrix}
\begin{bmatrix} w^\xi_c\\ w^z_c\end{bmatrix}.
\end{equation}
We only need estimates of the form (\ref{diagest}) when we work in the asymptotic regime $\tau \rightarrow -\infty$. The purpose of this section is to replace the failure in (\ref{failure}) with asymptotic statements that are all variations on
\begin{equation}
S_\alpha w \rightarrow
\begin{bmatrix}
\Gamma_\alpha & 0\\
0 & \Tau_\alpha
\end{bmatrix}
\begin{bmatrix} w^\xi_c\\ w^z_c\end{bmatrix}.
\end{equation}

We first need the following lemma to control the Biot-Savart law.

\begin{lemma}\label{lem14}
For $m > 2$, $f\in B_zL^2(m)$, and $0 < \delta < 1/2$, we have the following estimates:
\begin{equation}
\begin{aligned}
\|\nabla_\xi(\Delta_\xi^{-1} - (\slap)^{-1})f\|_{B_zL^\infty} + \|\sdz (\slap)^{-1} f\|_{B_zL^\infty} &+ \|\sdz\nabla_\xi(\slap)^{-1}\|_{B_zL^2}\\
	&\lesssim_{\delta,m} \|\sdz f\|_{B_zL^2(m)}^{1-2\delta}\|f\|_{B_zL^2(m)}^{2\delta}.
\end{aligned}
\end{equation}
\end{lemma}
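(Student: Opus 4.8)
The plan is to pass to the Fourier transform in $z$, reduce all three quantities to bounds that hold pointwise in the dual frequency $\zeta \in D^*$, and then recombine by integrating in $\zeta$ and interpolating. Write $g = \hat f(\zeta)$ for the $z$-Fourier transform at $\zeta$, and set $\mu = e^\tau|\zeta|^2 \ge 0$. Since $\sdz$ becomes multiplication by $ie^{\tau/2}\zeta$ and $\slap$ becomes $\Delta_\xi - \mu$ under the transform, each operator acts in $\xi$ alone at fixed $\zeta$, so it suffices to prove the pointwise-in-frequency bounds
\begin{align*}
\|\nabla_\xi(\Delta_\xi^{-1} - (\Delta_\xi - \mu)^{-1})g\|_{L^\infty_\xi} + \|e^{\tau/2}\zeta(\Delta_\xi-\mu)^{-1}g\|_{L^\infty_\xi} + \|e^{\tau/2}\zeta\,\nabla_\xi(\Delta_\xi-\mu)^{-1}g\|_{L^2_\xi} \lesssim_{m,\delta} (e^{\tau/2}|\zeta|)^{1-2\delta}\|g\|_{L^2(m)},
\end{align*}
uniformly in $\mu$, for each $0 < \delta < 1/2$. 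These are exactly of the type established in [Lemma 3.3, Corollary 3.4; \cite{jacob}], and I would follow that method.

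For the two $L^\infty$ terms I would use Young's convolution inequality. In the $\xi$-Fourier variable $k$ the relevant multipliers have magnitudes $\tfrac{\mu}{|k|(|k|^2+\mu)}$ and $\tfrac{\mu^{1/2}}{|k|^2+\mu}$; in both cases the rescaling $k = \sqrt\mu\,\ell$ shows the convolution kernel $K_\mu$ satisfies $\|K_\mu\|_{L^{p'}} \sim \mu^{1/2-1/p'}\|\tilde K\|_{L^{p'}}$, where $\tilde K$ is the kernel at $\mu = 1$. For the first multiplier the symbol lies in $L^1(\R^2)$, so $\tilde K$ is bounded near the origin and decays like $|x|^{-1}$ at infinity, giving $\|\tilde K\|_{L^{p'}} < \infty$ precisely for $p' > 2$; for the second, $\tilde K$ has only a logarithmic singularity at the origin with exponential decay, so $\|\tilde K\|_{L^{p'}} < \infty$ for all $p' < \infty$. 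Choosing $p' = 1/\delta \in (2,\infty)$ and its conjugate $p = 1/(1-\delta) \in (1,2)$, Young's inequality and the embedding $L^2(m)\embeds L^p$ (valid since $m>1$) yield $\lesssim \mu^{1/2-\delta}\|g\|_{L^2(m)} = (e^{\tau/2}|\zeta|)^{1-2\delta}\|g\|_{L^2(m)}$. For the $L^2$ term I would instead invoke Plancherel: the multiplier $\tfrac{\mu^{1/2}|k|}{|k|^2+\mu}$ is bounded by $\tfrac12$, and splitting at $|k| = \sqrt\mu$ one bounds $\tfrac{\mu^{1/2}}{|k|} \le (e^{\tau/2}|\zeta|)^{1-2\delta}|k|^{2\delta-1}$ on the high-frequency piece (integrable against $|\hat g|^2$ once $\delta>0$) and uses $\mu^{-1/2}|k|$ together with $\|\hat g\|_{L^\infty}\lesssim\|g\|_{L^1}\lesssim_m\|g\|_{L^2(m)}$ on the low-frequency piece, producing the same power of $e^{\tau/2}|\zeta|$.

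With the pointwise estimates in hand, I would integrate in $\zeta$ and split the integrand as
\begin{align*}
(e^{\tau/2}|\zeta|)^{1-2\delta}\|\hat f(\zeta)\|_{L^2(m)} = \left[e^{\tau/2}|\zeta|\,\|\hat f(\zeta)\|_{L^2(m)}\right]^{1-2\delta}\left[\|\hat f(\zeta)\|_{L^2(m)}\right]^{2\delta},
\end{align*}
then apply H\"older in $\zeta$ with conjugate exponents $\tfrac{1}{1-2\delta}$ and $\tfrac{1}{2\delta}$. Since $\int_{D^*} e^{\tau/2}|\zeta|\,\|\hat f(\zeta)\|_{L^2(m)}\,d\zeta = \|\sdz f\|_{B_zL^2(m)}$ and $\int_{D^*}\|\hat f(\zeta)\|_{L^2(m)}\,d\zeta = \|f\|_{B_zL^2(m)}$, this gives the claimed bound $\|\sdz f\|_{B_zL^2(m)}^{1-2\delta}\|f\|_{B_zL^2(m)}^{2\delta}$. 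The main obstacle is the pointwise-in-frequency step: one must extract the exact power $(e^{\tau/2}|\zeta|)^{1-2\delta}$ uniformly as $\mu \searrow 0$ and as $\mu \to \infty$, and it is precisely the integrability of the rescaled kernels at infinity and of $|k|^{2\delta-1}$ near $k = 0$ that pins down the restriction $0 < \delta < 1/2$.
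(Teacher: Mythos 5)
Your proposal is correct and follows essentially the same route as the paper: Fourier transform in $z$, the pointwise-in-frequency bound $\lesssim_{m,\delta}(e^{\tau/2}|\zeta|)^{1-2\delta}\|\hat f(\zeta)\|_{L^2(m)}$ (which the paper simply cites from Lemma 3.3 and Corollary 3.4 of \cite{jacob}, while you re-derive it via Young/Plancherel), and then integration in $\zeta$ with H\"older splitting off the exponents $1-2\delta$ and $2\delta$. The only detail worth noting is that on the low-frequency piece of the $L^2$ estimate your $\mu^{1/2}\|\hat g\|_{L^\infty}$ bound only beats $\mu^{1/2-\delta}$ for $\mu\le 1$; for $\mu\ge 1$ one falls back on the uniform boundedness of the multiplier, which you already observed.
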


\begin{proof}
We take the Fourier transform in $z$ and as in the proof of \big[Lemma 3.3 and Corollary 3.4; \cite{jacob}\big], we obtain the pointwise in frequency estimate
\begin{equation}
\begin{aligned}
\|\nabla_\xi(\Delta_{\xi}^{-1} - &(e^\tau|\zeta|^2 - \Delta_{\xi})^{-1})\hat f\|_{L^\infty_\xi} + e^{\tau/2}|\zeta|\|(e^{\tau}|\zeta|^2 - \Delta_{\xi})^{-1}\hat f\|_{L^\infty_\xi}\\
	&+ e^{\tau/2}|\zeta|\|\nabla_\xi(e^\tau|\zeta|^2 - \Delta_\xi)^{-1}\hat f\|_{L^2_\xi} \lesssim_{\delta,m}
e^{(\tau/2)(1-2\delta)}|\zeta|^{1-2\delta}\|\hat{f}\|_{L^2(m)}.
\end{aligned}
\end{equation}
Integrating in $\zeta$ and using H\"older's inequality (in $\zeta$), we obtain the desired estimate.
\end{proof}

\begin{lemma}\label{lem16}
Fix $m > 2$. For any compact set $K_1 \subset [0,\infty) \times B_zL^2(m)^3$,
\begin{equation}\label{conv3}
\lim_{\tau_0 \rightarrow -\infty} \sup_{(\tau,f) \in K_1} \|S_\alpha(\tau + \tau_0,\tau_0)f - (\Gamma_\alpha(\tau)f^\xi,\Tau_\alpha(\tau)f^z)\|_{B_zL^2(m)} = 0.
\end{equation}
\end{lemma}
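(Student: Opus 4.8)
The plan is to mirror the proof of Lemma \ref{lem15}, now treating the block–diagonal evolution $(\Gamma_\alpha(\tau),\Tau_\alpha(\tau))$ as the unperturbed flow and writing $S_\alpha$ as its Duhamel perturbation. As in Lemma \ref{lem15}, it suffices to establish \eqref{conv3} for a single Schwartz-class vector field $f\in\S(\R^2\times\R)^3$ with $\tau$ ranging over a compact subset of $[0,\infty)$. Indeed, Propositions \ref{prop1} and \ref{prop2} give the uniform operator bound $\norm{S_\alpha(\tau+\tau_0,\tau_0)}_{B_zL^2(m)\to B_zL^2(m)}+\norm{(\Gamma_\alpha(\tau),\Tau_\alpha(\tau))}_{B_zL^2(m)\to B_zL^2(m)}\lesssim_m e^{\gamma\tau}$, which is bounded on compact $\tau$-sets; since $\S$ is dense in $B_zL^2(m)^3$, Lemma \ref{uniformlem} applied to $T(\tau,\tau_0):=S_\alpha(\tau+\tau_0,\tau_0)-(\Gamma_\alpha(\tau),\Tau_\alpha(\tau))$ upgrades this pointwise convergence to uniform convergence over compact subsets $K_1\subset[0,\infty)\times B_zL^2(m)^3$.

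For the reduction, let $A$ denote the (time-dependent) generator of $S_\alpha$ in \eqref{Salphadefn}, let $B$ denote that of the block-diagonal flow, and set $P:=A-B$. Comparing \eqref{Salphadefn} with the defining equations of $\Gamma_\alpha$ and $\Tau_\alpha$, the perturbation $P$ collects exactly the terms of the $S_\alpha$-system absent from the block-diagonal one: (i) the diffusion terms $\sdz^2 f^\xi,\ \sdz^2 f^z$ arising from $\L+\sdz^2$ in place of $\L$; (ii) the cross terms $\alpha G\sdz v^\xi$, $\alpha G\sdz v^z$, together with the piece $\alpha\nabla_\xi G\cdot\sdz(-\slap)^{-1}(f^\xi)^\perp$ of the coupling $\alpha\nabla_\xi G\cdot v^\xi$; and (iii) the difference $\alpha\nabla_\xi G\cdot\nabla_\xi^\perp[(-\slap)^{-1}-(-\Delta_\xi)^{-1}]f^z$ between the full and the two-dimensional Biot–Savart kernels in the $z$-coupling. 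Duhamel's formula with the change of variables $s=\tau_0+\sigma$ then gives
\[
S_\alpha(\tau_0+\tau,\tau_0)f-(\Gamma_\alpha(\tau)f^\xi,\Tau_\alpha(\tau)f^z)=\int_0^\tau(\Gamma_\alpha(\tau-\sigma),\Tau_\alpha(\tau-\sigma))\,P(\tau_0+\sigma)\,S_\alpha(\tau_0+\sigma,\tau_0)f\ d\sigma.
\]

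The crux is to show the integrand vanishes in $B_zL^2(m)$ as $\tau_0\to-\infty$, uniformly for $\sigma\in[0,\tau]$. The key structural fact, exactly as for $S_0$ in Lemma \ref{lem15}, is that $\partial_z$ commutes with the entire $S_\alpha$-evolution (all coefficients in \eqref{Salphadefn} are $z$-independent and $\slap,\sdz$ are constant-coefficient in $z$); hence $\sdz^k S_\alpha(\tau_0+\sigma,\tau_0)f=e^{k(\tau_0+\sigma)/2}S_\alpha(\tau_0+\sigma,\tau_0)\partial_z^k f$. Every term of $P$ carries either a factor $\sdz^2=e^\tau\partial_{zz}$, a factor $\sdz=e^{\tau/2}\partial_z$, or (in case (iii)) the kernel difference bounded by Lemma \ref{lem14}, which itself supplies a factor $\norm{\sdz(\cdot)}_{B_zL^2(m)}^{1-2\delta}=e^{(1-2\delta)\tau/2}\norm{\partial_z(\cdot)}_{B_zL^2(m)}^{1-2\delta}$. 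Moving $\partial_z$ onto the Schwartz datum by commutation, using the $p=q=2$ regularization estimate \eqref{regularization} to bound $\norm{S_\alpha(\tau_0+\sigma,\tau_0)\partial_z^k f}_{B_zL^2(m)}\lesssim e^{\gamma\sigma}\norm{\partial_z^k f}_{B_zL^2(m)}$, and Lemma \ref{lem14} for the Biot–Savart pieces (multiplication by the Schwartz profiles $G,\nabla_\xi G$ maps $B_zL^\infty$ into $B_zL^2(m)$), each term of $P(\tau_0+\sigma)S_\alpha(\tau_0+\sigma,\tau_0)f$ is bounded in $B_zL^2(m)$ by $C e^{c(\tau_0+\sigma)}$ for some $c\in\{(1-2\delta)/2,\,1/2,\,1\}$ times a factor uniformly bounded for $\sigma$ in the compact interval. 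The limiting propagator $(\Gamma_\alpha,\Tau_\alpha)(\tau-\sigma)$ has $B_zL^2(m)$-operator norm $\lesssim e^{\gamma(\tau-\sigma)}$ with no singularity in $\sigma$ (the exponent $\tfrac1q-\tfrac1p$ vanishes), so integrating over $\sigma\in[0,\tau]$ with $\tau$ bounded bounds the whole expression by $C e^{c\tau_0}\to 0$.

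The main obstacle is the bookkeeping in (ii)–(iii): because $S_\alpha$ is genuinely non-block-diagonal (see \eqref{failure}), one must verify that every coupling term not present in $(\Gamma_\alpha,\Tau_\alpha)$—in particular the full Biot–Savart law feeding $v^\xi,v^z$ back into \emph{both} components—either carries an explicit $\sdz$ factor or reduces, after subtracting the two-dimensional kernel, to the $\slap^{-1}-\Delta_\xi^{-1}$ difference handled by Lemma \ref{lem14}. Once this identification is complete, the $e^{\tau_0/2}$-smallness together with the commutation of $\partial_z$ with $S_\alpha$ make the estimates routine, and no gradient (divergence-form) regularization of the limiting propagators is required, since multiplication by the Schwartz profiles $G$ and $\nabla_\xi G$ restores the spatial weight directly.
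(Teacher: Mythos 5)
Your proposal is correct and follows essentially the same route as the paper: the same Duhamel expansion of $S_\alpha$ around the block-diagonal propagator $(\Gamma_\alpha,\Tau_\alpha)$, the same complete enumeration of the perturbation terms (the $\sdz^2$ diffusion, the $\alpha G\sdz v$ couplings, the $\alpha\nabla_\xi G\cdot\sdz(-\slap)^{-1}(f^\xi)^\perp$ piece, and the kernel difference $(\slap)^{-1}-\Delta_\xi^{-1}$ controlled by Lemma \ref{lem14}), the same commutation of $\partial_z$ with $S_\alpha$ to extract $e^{c\tau_0}$ smallness on Schwartz data, and the same density-plus-uniform-bound reduction via Lemma \ref{uniformlem}. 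The paper merely organizes the computation by treating the $\xi$- and $z$-components separately (error terms $E_1,E_2,E_3$ and $I_1,\dots,I_4$), which your single perturbation operator $P$ reproduces term for term.
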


\begin{proof}
The proof is analogous to that of (\ref{conv2}) in Lemma \ref{lem15}, but with more complicated error terms. To simplify the argument, we prove (\ref{conv3}) in two parts: first, we show $S_\alpha^\xi \rightarrow \Gamma_\alpha$ and, second, we show $S_\alpha^z \rightarrow \Tau_\alpha$. Fix $\gamma > 0$, $\tau > 0$, and $w\in\S(\R^3)^3$. Thus, $w$ is sufficiently regular to justify writing $S_\alpha^\xi$ as a Duhamel integral of  $\Gamma_\alpha$ using (\ref{Salphadefn}):
\begin{equation}\label{Duhamel}
\begin{aligned}
S_\alpha^\xi(\tau + \tau_0,\tau_0)w - \Gamma_\alpha&(\tau+\tau_0,\tau_0)w^\xi = \\
	-&\int_{\tau_0}^{\tau_0 + \tau} \Gamma_\alpha(\tau_0 + \tau - s)\left[\alpha G e^s\partial_z^2(\slap)^{-1}(S_\alpha^\xi(s,\tau_0)w)^\perp\right] \ ds\\
	+&\int_{\tau_0}^{\tau_0 + \tau} \Gamma_\alpha(\tau_0 + \tau - s)\left[ \alpha G e^{s/2}\partial_z \nabla_\xi^\perp(\slap)^{-1}(S_\alpha^z(s,\tau_0)w)\right] \ ds\\
	+&\int_{\tau_0}^{\tau_0 + \tau} \Gamma_\alpha(\tau_0 + \tau - s)\left[e^s\partial_z^2(S_\alpha^\xi(s,\tau_0)w)\right] \ ds\\
	=:&E_1 + E_2 + E_3.
\end{aligned}
\end{equation}
The terms $E_j$ for $1\le j \le 3$ are error terms which we will now bound individually. We bound $E_1$ using the following estimate from Lemma \ref{lem14} with $\delta = 1/4$:
\begin{equation}\label{bound4}
\|\partial_z (\slap_s)^{-1}f\|_{B_zL^\infty_\xi} \lesssim_m e^{-s/4}\|\partial_z f\|_{B_zL^2(m)}^{1/2}\|f\|^{1/2}_{B_zL^2(m)}.
\end{equation}
Using the linear propagator bounds in Proposition \ref{prop1}, the $z$-independence and rapid decay of $G$, (\ref{bound4}), $S_\alpha$ and $\partial_z$ commute, and the linear propagator bounds in Proposition \ref{prop2}, we obtain
\begin{equation}
\begin{aligned}
\|E_1\|_{B_zL^2(m)} &\lesssim_m \int_{\tau_0}^{\tau_0 + \tau} e^{\gamma(\tau_0 + \tau - s)}e^s \|\alpha G\partial_z^2(\slap)^{-1}(S^\xi_\alpha(s,\tau_0)w)^\perp\|_{B_zL^2(m)} \ ds\\
	&\lesssim_{m,\alpha} \int_{\tau_0}^{\tau_0 + \tau} e^{\gamma(\tau_0 + \tau - s)}e^s \|G(\xi)\langle \xi \rangle^{2m}\|_{L^2_\xi}\|\partial_z^2(\slap)^{-1}(S^\xi_\alpha(s,\tau_0)w)^\perp\|_{B_zL^\infty} \ ds\\
	&\lesssim_{m,\alpha} \int_{\tau_0}^{\tau_0 + \tau} e^{\gamma(\tau_0 + \tau - s)}e^se^{-s/4}\|S^\xi_\alpha(s,\tau_0)\partial_z^2w\|_{B_zL^2(m)}^{1/2}\|S_\alpha^\xi\partial_zw\|_{B_zL^2(m)}^{1/2} \ ds\\
	&\lesssim_{m,\alpha} \left(\int_{\tau_0}^{\tau_0 + \tau} e^{\gamma\tau}e^{(3/4)s} \ ds\right) \|\partial_z^2w\|_{B_zL^2(m)}^{1/2}\|\partial_zw\|_{B_zL^2(m)}^{1/2}\\
	&\lesssim_{m,\alpha,w} e^{\gamma\tau}\left[e^{3/4\tau} - 1\right]e^{(3/4)\tau_0}.\\
\end{aligned}
\end{equation}
We estimate $E_2$ similarly, only replacing (\ref{bound4}) with the following estimate from Lemma \ref{lem14} with $\delta = 1/4$:
\begin{equation}\label{bound5}
\|\partial_z \nabla_\xi^\perp(\slap_s)^{-1}f\|_{B_zL^2} \lesssim_{m} e^{-s/4}\|\partial_z f\|_{B_zL^2(m)}^{1/2}\|f\|_{B_zL^2(m)}^{1/2}.
\end{equation}
Estimating as for $E_1$, we obtain
\begin{equation}
\begin{aligned}
\|E_2\|_{B_zL^2(m)} &\lesssim_{m,\alpha} \left(\int_{\tau_0}^{\tau_0 + \tau} e^{\gamma(\tau_0 + \tau - s)}e^{s/2}e^{-s/4}e^{\gamma(s - \tau_0)} \ ds \right)\|\partial_z w\|_{B_zL^2(m)}^{1/2}\|w\|_{B_zL^2(m)}^{1/2}\\
	&\lesssim_{m,\alpha,w} e^{\gamma\tau}\left[e^{\tau/4} - 1\right]e^{\tau_0/4} .
\end{aligned}
\end{equation}
Finally, we estimate $E_3$ as in the proof of (\ref{conv1}) to obtain
\begin{equation}
\|E_3\|_{B_zL^2(m)} \lesssim_{m,\alpha,w} e^{\gamma\tau}\left[e^\tau - 1\right]e^{\tau_0}.
\end{equation}
To summarize, we obtain
\begin{equation}
\|S_\alpha^\xi(\tau_0 + \tau,\tau_0)w - \Gamma_\alpha(\tau)w^\xi\|_{B_zL^2(m)} \lesssim_{m,\alpha,w} e^{(1 + \gamma)\tau}e^{\tau_0}.
\end{equation}
Therefore, (\ref{conv3}) holds for the $\xi$-component for a single Schwartz class vector field $w$ and $\tau > 0$ in a compact subset of $\R$. By Propositions \ref{prop1} and \ref{prop2}, we also have the uniform operator norm bound,
\begin{equation}
\|S_\alpha(\tau + \tau_0,\tau_0)\|_{B_zL^2(m)\rightarrow B_zL^2(m)} + \|\Gamma_{\alpha}(\tau)\|_{B_zL^2(m)\rightarrow B_zL^2(m)} \lesssim_{m,\alpha} e^{\gamma \tau}.
\end{equation}
We conclude (\ref{conv3}) holds for the $\xi$-component by Lemma \ref{uniformlem}.

\vspace{\baselineskip}

We now prove $S_\alpha^z \rightarrow \Tau_\alpha$ in the sense of (\ref{conv3}). Again, fix $\gamma > 0$, $\tau > 0$, $w\in \S(\R^3)^3$, and let $\tau_0 \in \R$. Thus, $w$ is sufficiently regular to justify writing $S_\alpha^z$ as Duhamel integral of  $\Tau_\alpha$ using (\ref{Salphadefn}):
\begin{equation}\label{Duhamel2}
\begin{aligned}
S_\alpha^z(\tau + \tau_0,\tau_0)w - \Tau_\alpha&(\tau+\tau_0-\tau_0)w^z = \\
	&\int_{\tau_0}^{\tau_0 + \tau} \Tau_\alpha(\tau_0 + \tau-s)\left[\alpha \nabla^\perp_\xi(\Delta_\xi^{-1} - (\slap)^{-1})S_\alpha^z(s,\tau_0)w\right] \cdot \nabla_\xi G \ ds\\
	+&\int_{\tau_0}^{\tau_0 + \tau} \Tau_\alpha(\tau_0 + \tau-s)\left[ \alpha\sdz (\slap)^{-1}(S_\alpha^\xi(s,\tau_0)w)^\perp\right]\cdot \nabla_\xi G\ ds\\
	-&\int_{\tau_0}^{\tau_0 + \tau} \Tau_\alpha(\tau_0 + \tau-s)\left[\alpha G\sdz \nabla^\perp_\xi \cdot (\slap)^{-1}S_\alpha^\xi(s,\tau_0)w\right] \ ds\\
	+&\int_{\tau_0}^{\tau_0 + \tau} \Tau_\alpha(\tau_0 + \tau-s)\left[e^s\partial_z^2S_\alpha^z(s,\tau_0)w\right] \ ds\\
	=:&I_1 + I_2 + I_3 + I_4.
\end{aligned}
\end{equation}
The terms $I_j$ for $1\le j \le 4$ are error terms which we will now bound individually. For $I_1$, we expect smallness for $\tau_0 \ll -1$ to come from the difference $\slap - \Delta_\xi$. In particular, we use the following estimate from Lemma \ref{lem14} with $\delta = 1/4$:
\begin{equation}\label{bound3}
\|\nabla^\perp_\xi\left[(\slap)^{-1}_s - \Delta_\xi^{-1}\right]f\|_{B_zL^\infty_\xi} \lesssim_{m} e^{s/4}\|\partial_z f\|_{B_zL^2(m)}^{1/2}\|f\|_{B_zL^2(m)}^{1/2}.
\end{equation}
Using the linear propagator bounds in Proposition \ref{prop1}, the $z$-independence and rapid decay of $\nabla G$, (\ref{bound3}), $S_\alpha$ and $\partial_z$ commute (the variable coefficients in (\ref{Salphadefn}) are $z$-independent), and the linear propagator bounds in Proposition \ref{prop2}, we obtain
\begin{equation}
\begin{aligned}
\| I_1\|_{B_zL^2(m)} &\lesssim_{m} \int_{\tau_0}^{\tau_0 + \tau} e^{\gamma(\tau_0 + \tau - s)}\left\|\left[\alpha \nabla^\perp_\xi((\slap)^{-1} - \Delta_\xi^{-1})S_\alpha^z(s,\tau_0)w\right] \cdot \nabla_\xi G\right\|_{B_zL^2(m)} \ ds\\
	&\lesssim_{m,\alpha} \int_{\tau_0}^{\tau_0 + \tau} e^{\gamma(\tau_0 + \tau - s)}\|\nabla_\xi G(\xi)\langle\xi\rangle^{2m}\|_{L_\xi^2}\left\|\nabla_\xi^\perp(\slap^{-1} - \Delta^{-1}_\xi)S_\alpha^z(s,\tau_0)w\right\|_{B_zL^\infty} \ ds\\
	&\lesssim_{m,\alpha} \int_{\tau_0}^{\tau_0 + \tau}e^{\gamma(\tau_0 + \tau - s)}e^{s/4}\|\partial_z S_\alpha^z(s,\tau_0)w\|_{B_zL^2(m)}^{1/2}\|S_\alpha^z(s,\tau_0)w\|_{B_zL^2(m)}^{1/2} \ ds\\
	&\lesssim_{m,\alpha} \left(\int_{\tau_0}^{\tau_0 + \tau}e^{\gamma(\tau_0 + \tau -s)}e^{s/4}e^{\gamma(s-\tau_0)} \ ds\right) \|\partial_zw\|_{B_zL^2(m)}^{1/2}\|w\|_{B_zL^2(m)}^{1/2}\\
	&\lesssim_{\alpha,m,w} e^{\gamma\tau}\left[e^{\tau/4} - 1\right]e^{\tau_0/4}.
\end{aligned}
\end{equation}
We estimate similarly for $I_2$, where we replace (\ref{bound3}) with the estimate (\ref{bound4}) from Lemma \ref{lem14} with $\delta = 1/4$:
\begin{equation}
\begin{aligned}
\|I_2\|_{B_zL^2(m)} &\lesssim_{m,\alpha} \left(\int_{\tau_0}^{\tau_0 + \tau}e^{\gamma(\tau_0 + \tau -s)}e^{s/2}e^{-s/4}e^{\gamma(s-\tau_0)} \ ds\right) \|\partial_zw\|_{B_zL^2(m)}^{1/2}\|w\|_{B_zL^2(m)}^{1/2}\\
	&\lesssim_{\alpha,m,w} e^{\gamma\tau}\left[e^{\tau/4} - 1\right]e^{\tau_0/4}.
\end{aligned}
\end{equation}
The estimate for $I_3$ is again similar, where the replacement for (\ref{bound3}) is the estimate (\ref{bound5}), also from Lemma \ref{lem14} with $\delta = 1/4$:
\begin{equation}
\begin{aligned}
\| I_3\|_{B_zL^2(m)} &\lesssim_{m,\alpha} \int_{\tau_0}^{\tau_0 + \tau} e^{\gamma(\tau_0 + \tau - s)}\|G(\xi)\langle\xi\rangle^{2m}\|_{L_\xi^\infty}e^{s/2}\left\|\partial_z\nabla^\perp_\xi\cdot (\slap)^{-1}S_\alpha^\xi(s,\tau_0)w\right\|_{B_zL^2} \ ds\\
	&\lesssim_{m,\alpha} \left(\int_{\tau_0}^{\tau_0 + \tau}e^{\gamma(\tau_0 + \tau -s)}e^{s/2}e^{-s/4}e^{\gamma(s-\tau_0)} \ ds\right) \|\partial_zw\|_{B_zL^2(m)}^{1/2}\|w\|_{B_zL^2(m)}^{1/2}\\
	&\lesssim_{\alpha,m,w} e^{\gamma\tau}\left[e^{\tau/4} - 1\right]e^{\tau_0/4}.
\end{aligned}
\end{equation}
Finally, we estimate $I_4$ as in the proof of (\ref{conv1}), to obtain
\begin{equation}
\| I_4\|_{B_zL^2(m)} \lesssim_{m,\alpha,w} e^{\gamma\tau}\left[e^\tau - 1\right]e^{\tau_0}.
\end{equation}
Therefore, we have shown
\begin{equation}
\|S_\alpha^z(\tau + \tau_0,\tau_0)w - \Tau_\alpha(\tau)w^z\|_{B_zL^2(m)} \lesssim_{m,\alpha,w} e^{(1 + \gamma)\tau}e^{\tau_0}.
\end{equation}
This proves (\ref{conv3}) for a single Schwartz class function $w$ and times $\tau$ in a compact subset of $\R$. The full statement of (\ref{conv3}) follows from noting that by Propositions \ref{prop1} and \ref{prop2}, we have the uniform bound
\begin{equation}
\|S_\alpha(\tau + \tau_0,\tau_0)\|_{B_zL^2(m)\rightarrow B_zL^2(m)} + \|\Tau_{\alpha}(\tau)\|_{B_zL^2(m)\rightarrow B_zL^2(m)} \lesssim_{m,\alpha} e^{\gamma \tau}.
\end{equation}
Thus, Lemma \ref{uniformlem} implies (\ref{conv3}).
\end{proof}

\begin{lemma}\label{lem17}
Fix $m > 2$. For $K_2$ any compact subset of $(0,\infty) \times B_zL^2(m)^3$,
\begin{equation}\label{conv4}
\begin{aligned}
\lim_{\tau_0 \rightarrow -\infty} \sup_{(\tau,f)\in K_2} &\biggr(\left\|\nabla_\xi S_\alpha^\xi(\tau + \tau_0, \tau_0)f - \nabla_\xi \Gamma_\alpha(\tau)f^\xi\right\|_{B_zL^2(m)}\\ 
 &+ \left\|\nabla_\xi S_\alpha^z(\tau + \tau_0,\tau_0)f - \nabla_\xi \Tau_\alpha(\tau)f^z\right\|_{B_zL^2(m)}\\
	&+ e^{(\tau+\tau_0)/2}\|\partial_z S_\alpha(\tau+\tau_0,\tau_0)f\|_{B_zL^2(m)}\biggr)  = 0.
\end{aligned}
\end{equation}
Moreover, let $F: (-\infty,\tau^*) \rightarrow M_{3,3}(\R)$ be such that $\divz_\tau\divz_\tau F(\tau) = 0$ for each $\tau < \tau^*$ and such that for each $1\le i,j\le 3$, $\{F_{ij}(s)\}_{s < \tau^*}$ is a precompact subset of $B_zL^{4/3}(m)$. Then, for any $K_3\subset (0,\infty)$ compact, 
\begin{equation}\label{conv5}
\begin{aligned}
\lim_{\tau_0 \rightarrow -\infty} \sup_{\tau \in K_3} &\biggr(\left\|S_\alpha^\xi(\tau + \tau_0,\tau_0)\divz_{\tau_0}F(\tau_0) - \Gamma_\alpha(\tau)(\div_\xi F^{\xi,\xi}(\tau_0))\right\|_{B_zL^2(m)}\\ 
	&+ \left\|S_\alpha^z(\tau + \tau_0,\tau_0)\divz_{\tau_0}F(\tau_0) -\Tau_\alpha(\tau)(\div_\xi F^{z,\xi}(\tau_0))\right\|_{B_zL^2(m)}\biggr) = 0,
\end{aligned}
\end{equation}
where we write $F$ in block matrix form as
\begin{equation}F = \begin{bmatrix}\label{block}
F^{\xi,\xi} & F^{\xi,z}\\
F^{z,\xi} & F^{z,z}
\end{bmatrix}.\end{equation}
Furthermore, for each such $K_3$,
\begin{equation}\label{conv6}
\begin{aligned}
\lim_{\tau_0 \rightarrow -\infty} \sup_{\tau\in K_3} &\biggr(\left\|\nabla_\xi S_\alpha^\xi(\tau + \tau_0,\tau_0)\divz_{\tau_0}F(\tau_0) - \nabla_\xi\Gamma_\alpha(\tau)(\div_\xi F^{\xi,\xi}(\tau_0))\right\|_{B_zL^2(m)}\\ 
	&+ \left\|\nabla_\xi S_\alpha^z(\tau + \tau_0,\tau_0)\divz_{\tau_0}F(\tau_0) -\nabla_\xi\Tau_\alpha(\tau)(\div_\xi F^{z,\xi}(\tau_0))\right\|_{B_zL^2(m)}\\
&+e^{(\tau +\tau_0)/2}\|\partial_zS_\alpha(\tau + \tau_0,\tau_0)\divz_{\tau_0}F(\tau_0)\|_{B_zL^2(m)}\biggr) = 0,
\end{aligned}
\end{equation}
with the same notation.
\end{lemma}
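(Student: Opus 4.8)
The plan is to follow the scheme of Lemma \ref{lem16} at the level of strategy: for each quantity I would first establish the claimed convergence on a single Schwartz datum (or, for the divergence statements, a single Schwartz flux) by differentiating the Duhamel representations \eqref{Duhamel} and \eqref{Duhamel2} and re-running the error estimates, and then upgrade to the compact (resp. precompact) families via Lemma \ref{uniformlem} together with the uniform operator bounds of Propositions \ref{prop1} and \ref{prop2}. The only new ingredients are the gradient and divergence regularization estimates \eqref{scalar1}--\eqref{scalar2} and \eqref{vector1}--\eqref{vector3}, which replace the bare $B_zL^2(m)$ bounds used in the proof of Lemma \ref{lem16}, together with the weighted Biot--Savart bounds of Lemma \ref{lem14} that control the $\slap-\Delta_\xi$ and $\sdz(\slap)^{-1}$ discrepancies.

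For the gradient convergence \eqref{conv4} I would apply $\nabla_\xi$ to the identities \eqref{Duhamel} and \eqref{Duhamel2}. Since $\nabla_\xi$ does not commute with $\Gamma_\alpha$ or $\Tau_\alpha$, I keep $\nabla_\xi$ outside the Duhamel integrals and estimate each error term $E_j$, $I_j$ using the gradient regularization bounds for $\Gamma_\alpha$ and $\Tau_\alpha$ from Proposition \ref{prop1}. This introduces an extra factor $a(\tau_0+\tau-s)^{-1/2}$ inside each integral, which is integrable near the endpoint $s=\tau_0+\tau$, while the algebraically small factors (the $e^{-s/4}$ from Lemma \ref{lem14} and the $e^{s}$ or $e^{s/2}$ coming from the $\partial_z$ and $\slap-\Delta_\xi$ terms) survive intact; hence the same net $e^{c\tau_0}$ decay as in Lemma \ref{lem16} is obtained on Schwartz data, and Lemma \ref{uniformlem} with the uniform gradient bounds upgrades this to compact sets. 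The $z$-smoothing factor $e^{(\tau+\tau_0)/2}\partial_z S_\alpha$ in \eqref{conv4} is simpler: since the coefficients in \eqref{Salphadefn} are $z$-independent, $\partial_z$ commutes with $S_\alpha$, so on a Schwartz datum $e^{(\tau+\tau_0)/2}\partial_z S_\alpha f = e^{\tau_0/2}\,e^{\tau/2}S_\alpha\partial_z f$ is $O(e^{\tau_0/2})$ and vanishes, while the uniform bound required by Lemma \ref{uniformlem} is exactly the $z$-component of \eqref{vector1}.

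The divergence statements \eqref{conv5} and \eqref{conv6} are the heart of the matter. Here I would split the time-dependent operator as $\divz_{\tau_0}F = \div_\xi F^{\cdot,\xi} + \sdz_{\tau_0}F^{\cdot,z}$, where $\sdz_{\tau_0}=e^{\tau_0/2}\partial_z$ and $F^{\cdot,\xi}$, $F^{\cdot,z}$ denote the $\xi$-columns and the $z$-column of $F$ in the block notation \eqref{block}. The piece $\sdz_{\tau_0}F^{\cdot,z}$ carries an explicit prefactor $e^{\tau_0/2}$; commuting $\partial_z$ through $S_\alpha$ and invoking \eqref{vector1} (resp. \eqref{vector3} for the gradient version required by \eqref{conv6}) gives a uniform operator bound, while on Schwartz fluxes the $e^{\tau_0/2}$ forces pointwise vanishing, so Lemma \ref{uniformlem} disposes of it over the precompact family $\{F^{\cdot,z}(s)\}$. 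For the remaining piece $S_\alpha\div_\xi F^{\cdot,\xi}$ I observe that for a single Schwartz flux the field $\div_\xi F^{\cdot,\xi}$ is again Schwartz, hence lies in $B_zL^2(m)^3$, so the base convergence \eqref{conv3} of Lemma \ref{lem16} applies directly and yields $S_\alpha\div_\xi F^{\cdot,\xi}\to(\Gamma_\alpha\div_\xi F^{\xi,\xi},\Tau_\alpha\div_\xi F^{z,\xi})$, which is precisely the claimed limit. The passage to the precompact family $\{F^{\cdot,\xi}(s)\}\subset B_zL^{4/3}(m)$ is then carried out with Lemma \ref{uniformlem}, the needed uniform bound on $S_\alpha(\tau+\tau_0,\tau_0)\div_\xi$ (and on $\nabla_\xi S_\alpha\div_\xi$ for \eqref{conv6}) coming from the smoothing estimates \eqref{vector2}--\eqref{vector3} applied to a zero-padded matrix extension of the flux.

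The main obstacle I anticipate is exactly this last bookkeeping. The operator $\divz_{\tau_0}$ depends on the lower time $\tau_0$, and the hypothesis $\divz_{\tau_0}\divz_{\tau_0}F(\tau_0)=0$ is a $\tau_0$-dependent constraint, so one cannot simply fix a single Banach space and quote Lemma \ref{uniformlem} on it. The resolution is to perform the split above \emph{before} invoking Lemma \ref{uniformlem}, applying it separately to the genuinely precompact families $\{\div_\xi F^{\cdot,\xi}(s)\}$ and $\{F^{\cdot,z}(s)\}$ in fixed spaces, thereby removing the $\tau_0$-dependence from the data and leaving it only in the (uniformly bounded) operators $T(\tau,\tau_0)$. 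The one genuinely delicate point is to verify that the estimates \eqref{vector2}--\eqref{vector3}, which are stated for $3\times 3$ matrices satisfying $\divz\divz F=0$, still deliver the uniform operator bounds for the individual split pieces; I expect this to follow by embedding each piece into an auxiliary zero-padded matrix and exploiting that the limiting operators $\Gamma_\alpha\div_\xi$ and $\Tau_\alpha\div_\xi$ inherit the same singularity structure from Proposition \ref{prop1}.
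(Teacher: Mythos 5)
Your proposal is correct and follows essentially the same route as the paper: differentiating the Duhamel representations \eqref{Duhamel}--\eqref{Duhamel2} and re-running the error estimates with the gradient propagator bounds for \eqref{conv4}, splitting $\divz_{\tau_0}F$ into the $\div_\xi$ columns plus an $e^{\tau_0/2}\partial_z$ remainder (the paper's $W_1 + e^{\tau_0/2}\partial_z W_2$) and feeding the first piece back into Lemma \ref{lem16} for \eqref{conv5}--\eqref{conv6}, and upgrading from Schwartz data to compact families via Lemma \ref{uniformlem} with the operator norm bounds of Propositions \ref{prop1} and \ref{prop2}. Your extra care about the $\tau_0$-dependence of the constraint and of $\divz_{\tau_0}$ only makes explicit what the paper does implicitly by performing the split before invoking Lemma \ref{uniformlem}.
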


\begin{proof}
First, the gradient convergence (\ref{conv4}) for a single function $w\in \S(\R^3)^3$ and time $\tau > 0$ follows in the same manner as (\ref{conv3}). Indeed, we write $\nabla_\xi S^\xi_\alpha$ as a Duhamel integral of $\nabla_\xi \Gamma_\alpha$ by taking a gradient of (\ref{Duhamel}) to obtain
$$\nabla_\xi S^\xi_\alpha(\tau + \tau_0, \tau_0) - \nabla_\xi\Gamma_\alpha(\tau) = \sum_{i = 1}^3 \nabla_\xi E_i,$$
where the error terms $E_i$ are those defined in (\ref{Duhamel}). We now estimate $E_i$ as in the preceding lemma, but replacing the linear propagator bounds on $\Gamma_\alpha$ with those on $\nabla_\xi\Gamma_\alpha$. For example, we estimate $E_1$
using the gradient bounds in Proposition \ref{prop1}, the $z$-independence and rapid decay of $G$, (\ref{bound4}), $S_\alpha$ and $\partial_z$ commute, the linear propagator bounds in Proposition \ref{prop2}, and a change of variables to obtain
\begin{equation}
\begin{aligned}
\|\nabla_\xi E_1\|_{B_zL^2(m)} &\lesssim_{m,\alpha,w} \int_{\tau_0}^{\tau_0 + \tau} \frac{e^{\gamma(\tau_0 + \tau - s)}}{a(\tau_0 + \tau - s)^{1/2}} e^s e^{-s/4} e^{\gamma(s - \tau_0)} \ ds\\
	&\lesssim_{m,\alpha,w} e^{(3/4)\tau_0}\int_0^\tau \frac{e^{(3/4)s + \gamma \tau}}{a(\tau - s)^{1/2}} \ ds\\
	&\lesssim_{m,\alpha,w} (\tau + \tau^{1/2})e^{(\gamma + 3/4)\tau}e^{(3/4)\tau_0}.
\end{aligned}
\end{equation}
The estimates of the other error terms $E_2$ and $E_3$ may be modified analogously. Therefore, the $\xi$-component of (\ref{conv4}) holds for $\tau$ in a compact subset of $\R$ and $w$ a Schwartz class vector field. The full statement follows from the operator norm bound from Propositions \ref{prop1} and \ref{prop2},
\begin{equation}
\|\nabla_\xi S_\alpha(\tau + \tau_0,\tau_0)\|_{B_zL^2(m)\rightarrow B_zL^2(m)} + \|\nabla_\xi \Gamma_{\alpha}(\tau)\|_{B_zL^2(m)\rightarrow B_zL^2(m)} \lesssim_{m,\alpha} \frac{e^{\gamma \tau}}{a(\tau)^{1/2}},
\end{equation}
and Lemma \ref{uniformlem}. The $z$-component of (\ref{conv4}) follows from a similar modification of the estimates in Lemma \ref{lem16}. The $\sdz$ convergence in (\ref{conv4}) follows by commuting $\partial_z$ and $S_\alpha$ for a Schwartz class vector field and appealing to Lemma \ref{uniformlem}.

Now, we will show (\ref{conv5}) for a single time $\tau > 0$ and a Schwartz class matrix $W\in M_3(\R^3)$, i.e. $W_{ij} \in S(\R^3)$ for each $1\le i,j \le 3$. Fix such a $\tau > 0$, $W$, and a $\gamma > 0$. Then, using the block matrix notation introduced in (\ref{block}), split $\divz W$ as
$$\divz_{\tau_0} W = \begin{bmatrix}
\div_\xi W^{\xi,\xi}\\
\div_\xi W^{z,\xi}
\end{bmatrix} +
e^{\tau_0/2}\partial_z\begin{bmatrix}
W^{\xi,z}\\
W^{z,z}
\end{bmatrix} := W_1 + e^{\tau_0/2}\partial_z W_2.$$
Thus, using the estimates in Proposition \ref{prop2}, we obtain
\begin{equation}
\|S_\alpha(\tau + \tau_0,\tau_0)e^{\tau_0/2}\partial_zW_2\|_{B_zL^2(m)} \lesssim_{m,\alpha} e^{\gamma \tau + \tau_0/2}\|\partial_z W_2\|_{B_zL^2(m)},
\end{equation}
which decays as $\tau_0\rightarrow -\infty$. Combined with Lemma \ref{lem16} applied to $W_1$, this proves (\ref{conv5}) applied to a single Schwartz class matrix $W$ and time $\tau$ in a compact subset of $\R$. The full claim then follows by the operator norm bounds in Proposition \ref{prop1} and \ref{prop2} and Lemma \ref{uniformlem}.

Finally, convergence (\ref{conv6}) follows for $\tau$ in a compact subset of $\R$ and Schwartz class matrix $W$ by a simple modification to the proof of (\ref{conv5}). The full statement of (\ref{conv6}) follows once again from operator norm bounds in Propositions \ref{prop1} and \ref{prop2} and Lemma \ref{uniformlem}.
\end{proof}

\subsection{Step 1: Parabolic Regularity and Compactness}

The main result of this section is the following proposition which is analogous to Proposition \ref{prop5} in the case of $\R^2 \times \T$.
\begin{proposition}\label{prop6}
Let $w_c(\tau)$ be a solution to (\ref{coremild}) on $\R^2 \times \R$ such that for some $-\infty < \tau^* \le \infty$, $m > 2$, and $\Lambda > 0$,
\begin{enumerate}[label=(\roman*)]
\item \label{assumptioni2} $\sup_{\tau < \tau^*}\norm{w_c(\tau)}_{B_z L^2(m)} \le \Lambda;$
\item \label{assumptionii2} $\lim_{\tau \to -\infty}\norm{w_c^\xi(\tau)}_{B_z L^2(m)} = 0$;
\item \label{assumptioniii2} $\sup_{\tau < \tau^*}\norm{\partial_z w_c^z(\tau)}_{B_z L^2(m)} < \infty$;
\item \label{assumptioniv2}  and $\sup_{\tau < \tau^*}\norm{z w_c^z(\tau)}_{B_z L^2(m)} < \infty$.
\end{enumerate}
Then,
\begin{equation}\label{bound1}
\sup_{\tau < \tau^*} \|\sdv w_c(\tau)\|_{B_zL^2(m)} \le C(\Lambda, m)
\end{equation}
and
\begin{equation}\label{bound2}
\lim_{\tau \rightarrow -\infty} \|\sdv w_c^\xi(\tau)\|_{B_zL^2(m)} = 0,
\end{equation}
and moreover, the trajectory $\{w_c(\tau)\}_{\tau < \tau^*}$ is precompact in $B_zL^2(m^\prime)$ for each $2 < m^\prime < m$.
\end{proposition}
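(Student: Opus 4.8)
The plan is to reproduce the four-part architecture of Proposition \ref{prop5}, replacing the scalar heat-type propagator $S_0$ by the vector-valued linearized propagator $S_\alpha$ and inserting the semigroup asymptotics of Lemmas \ref{lem16}--\ref{lem17} precisely where the off-diagonal coupling of $S_\alpha$ (cf. (\ref{failure})) would otherwise break the argument. I would first prove the uniform gradient bound (\ref{bound1}) exactly as in Lemma \ref{lem7}: differentiating (\ref{coremild}) gives an integral equation for $\sdv_\tau w_c$ driven by the bilinear term $(v_c \cdot \sdv)w_c - (w_c \cdot \sdv)v_c$, which I bound in $B_zL^{4/3}(m)$ by H\"older (Lemma \ref{lem1}), the Biot--Savart estimates (Lemma \ref{lem2}), and the embeddings of Lemma \ref{lem3}, and then close via a short-time Gr\"onwall bootstrap using the smoothing estimate (\ref{vector1}) for $S_\alpha$. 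Since (\ref{vector1}) has the same $a(\tau-\tau_0)^{-1/2}$ and $a(\tau-\tau_0)^{-3/4}$ scalings as the $S_0$ estimate (\ref{scalar1}) (the exponential rate is harmless over a short window), the singular-kernel bookkeeping of Lemma \ref{lem7} transfers verbatim. Note that $\sdv \cdot w_c = 0$, because $w^g = (0,0,\alpha G)^t$ is $z$-independent and hence $\sdz$-free, so the divergence-free structure exploited throughout remains available.

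The decisive departure from Theorem \ref{thm1} is that I would establish the precompactness of $\{w_c(\tau)\}_{\tau<\tau^*}$ \emph{before} the gradient decay (\ref{bound2}), so as to sidestep a circularity with the semigroup asymptotics. Because $B_zL^2(m') = L^1_\zeta L^2_\xi(m')$ with $\zeta$ now ranging over the \emph{continuous} dual $\R$ rather than the discrete $\Z$ of Theorem \ref{thm1}, the pointwise-in-$\zeta$ diagonalization of Lemma \ref{lem8} is no longer available (an $L^1_\zeta$ bound does not control $\widehat{w_c}(\zeta)$ uniformly in $\zeta$), and one must instead invoke a Banach-space-valued Fr\'echet--Kolmogorov criterion requiring: (a) boundedness, from assumption \ref{assumptioni2}; (b) precompactness in $L^2_\xi(m')$ of the frequency averages $\int_A \widehat{w_c}(\tau,\zeta)\,d\zeta$, which follows from the uniform $L^1_\zeta$ bound on $\nabla_\xi \widehat{w_c}$ supplied by (\ref{bound1}) together with the compact embedding $H^1(m) \embeds L^2(m')$ of Lemma \ref{lem3}; (c) tightness at high frequency, $\int_{|\zeta|>R}\|\widehat{w_c^z}(\tau,\zeta)\|_{L^2(m)}\,d\zeta \to 0$ uniformly in $\tau$, which holds because $\int |\zeta|\,\|\widehat{w_c^z}(\tau,\zeta)\|_{L^2(m)}\,d\zeta = \|\partial_z w_c^z(\tau)\|_{B_zL^2(m)}$ is bounded by assumption \ref{assumptioniii2}; and (d) equicontinuity in $\zeta$, $\int\|\widehat{w_c^z}(\tau,\zeta+h) - \widehat{w_c^z}(\tau,\zeta)\|_{L^2(m)}\,d\zeta \to 0$ as $h\to0$ uniformly in $\tau$. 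Ingredient (d) has no analogue in the periodic case and is exactly what forces assumption \ref{assumptioniv2}: since $\widehat{z w_c^z} = i\,\partial_\zeta \widehat{w_c^z}$, the bound on $\|z w_c^z\|_{B_zL^2(m)}$ is a uniform $W^{1,1}_\zeta(L^2_\xi(m))$ bound on $\widehat{w_c^z}$, which controls the $L^1_\zeta$ modulus of continuity. The $\xi$-component is handled separately and more easily via its strong decay from assumption \ref{assumptionii2}.

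With precompactness in hand, I would prove the decay (\ref{bound2}) along the lines of Lemma \ref{lem6}, but accounting for the non-diagonal $S_\alpha$. First I would establish the analogue of the bilinear estimate of Lemma \ref{lem12}, bounding the $\xi$-nonlinearity by $\|w_c^\xi\|_{B_zL^2(m)} + \|\sdv w_c^\xi\|_{B_zL^2(m)}$ using $\sdv\cdot w_c = 0$ and the Biot--Savart splitting (\ref{biotsavartstructure}) to purge every $w_c^z$; this is unchanged. In the short-window integral equation for $\sdv_\tau w_c^\xi$, the leading term $\sdv_\tau S_\alpha^\xi(\tau,\tau_0)w_c(\tau_0)$ now couples to $w_c^z(\tau_0)$, which does not decay; this is the structural obstacle. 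The remedy is to write the nonlinearity as $\divz F$ with $F = v_c \otimes w_c - w_c \otimes v_c$ (so $\divz\divz F = 0$, and $\{F_{ij}(s)\}$ is precompact in $B_zL^{4/3}(m)$ by continuity of the bilinear Biot--Savart map on the precompact trajectory), and then invoke the asymptotic decoupling (\ref{conv4}) and (\ref{conv6}) of Lemma \ref{lem17}, which replace $\sdv S_\alpha^\xi$ by $\sdv\Gamma_\alpha$ acting on the $\xi$-data alone, up to errors vanishing as $\tau_0 \to -\infty$. Since $\Gamma_\alpha$ is genuinely decoupled and $\|w_c^\xi(\tau_0)\|_{B_zL^2(m)} \to 0$ by \ref{assumptionii2}, the Gr\"onwall bootstrap of Lemma \ref{lem6} then yields $\sdv w_c^\xi(\tau) \to 0$.

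I expect the main obstacle to be the compactness step: controlling the continuous-frequency trajectory simultaneously at high frequencies (via assumption \ref{assumptioniii2}) and in its $\zeta$-modulus of continuity (via assumption \ref{assumptioniv2}), through a vector-valued Fr\'echet--Kolmogorov argument that has no counterpart in the periodic setting. The secondary difficulty is the recovery of the decoupling in (\ref{bound2}), which is only asymptotic and hence requires the precompactness established beforehand in order to apply Lemma \ref{lem17} uniformly.
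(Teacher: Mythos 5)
Your proposal follows essentially the same route as the paper: the uniform gradient bound via the $S_\alpha$ smoothing estimates as in Lemma \ref{lem7}, compactness of $\{w_c^z(\tau)\}$ obtained before the decay step by combining the $H^1(m)\embeds L^2(m')$ gain with high-frequency tightness from Assumption (iii) and $\zeta$-equicontinuity from Assumption (iv) (via $\widehat{z w_c^z} = i\partial_\zeta \widehat{w_c^z}$), and then the decay of $\sdv w_c^\xi$ by decoupling $S_\alpha$ into $(\Gamma_\alpha,\Tau_\alpha)$ through Lemmas \ref{lem16}--\ref{lem17}, using precompactness of the trajectory and of the bilinear nonlinearity. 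The only cosmetic difference is that you phrase the compactness step as a vector-valued Fr\'echet--Kolmogorov criterion where the paper cites the Aubin--Lions--Simon lemma of \cite{simon}; these rest on the same translation-continuity mechanism and use the assumptions identically.
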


As in the proof of Proposition \ref{prop5}, we prove this via a sequence of lemmas. Most of the arguments below will be nearly identical to those from Section 4.2, so we attempt to minimize repetition by emphasizing the differences and omitting identical arguments.

\begin{lemma}\label{lem18}
Let $w_c$ be a solution to (\ref{coremild}) such that for some $\tau^* > -\infty$, $m > 2$, and $\Lambda > 0$,
\begin{equation}
\sup_{\tau < \tau^*}\norm{w_c(\tau)}_{B_z L^2(m)} \le \Lambda.
\end{equation}
Then,
\begin{equation}
\sup_{\tau < \tau^*} \|\sdv_\tau w_c(\tau)\|_{B_zL^2(m)} \le C(\Lambda,m).
\end{equation}
\end{lemma}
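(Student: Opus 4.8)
The plan is to transcribe the proof of Lemma \ref{lem7} almost verbatim, replacing the propagator $S_0$ by $S_\alpha$ and using that the gradient estimate (\ref{vector1}) for $S_\alpha$ has \emph{exactly} the same scaling in $a(\tau-\tau_0)$ as the estimate (\ref{scalar1}) for $S_0$. The one genuinely new point to record at the outset is that the perturbation velocity $v_c$ is the self-similar Biot-Savart field of $w_c$: since $v^g = -\sdv\times(\slap)^{-1}w^g$, subtracting from $v = -\sdv\times(\slap)^{-1}w$ shows $v_c = -\sdv\times(\slap)^{-1}w_c$, so Lemma \ref{lem2} applies with $(v,w)$ replaced by $(v_c,w_c)$. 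Applying $\sdv_\tau$ to (\ref{coremild}) gives, for $\tau_0 < \tau$,
\begin{equation*}
\sdv_\tau w_c(\tau) = \sdv_\tau S_\alpha(\tau,\tau_0)w_c(\tau_0) - \int_{\tau_0}^\tau \sdv_\tau S_\alpha(\tau,s)\left[(v_c\cdot\sdv_s)w_c - (w_c\cdot\sdv_s)v_c\right](s)\ ds,
\end{equation*}
and I would bound the first term using (\ref{vector1}) with $p=q=2$ (producing the kernel $a(\tau-\tau_0)^{-1/2}$) and the integrand using (\ref{vector1}) with $p=2$, $q=4/3$ (producing $a(\tau-s)^{-3/4}$), together with the harmless growth factor $e^{\mu(\tau-s)}$, where $\mu=\mu(\alpha)\in(0,1/2)$ is fixed.

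The core input is the bilinear bound on the nonlinearity, which is structurally identical to the one inside Lemma \ref{lem7}. For the transport term I would use H\"older's inequality (Lemma \ref{lem1}), the Biot-Savart bound (\ref{biotsavart}), and the embedding $B_zL^2(m)\embeds B_zL^{4/3}$ to obtain
\begin{equation*}
\|(v_c\cdot\sdv)w_c\|_{B_zL^{4/3}(m)} \lesssim_m \|v_c\|_{B_zL^4}\|\sdv w_c\|_{B_zL^2(m)}\lesssim_m \Lambda\|\sdv w_c\|_{B_zL^2(m)},
\end{equation*}
and for the stretching term I would use H\"older, the Riesz bound (\ref{riesz}), and the Sobolev embedding $B_z\dot W^{1,4/3}\embeds B_zL^4$ to get $\|(w_c\cdot\sdv)v_c\|_{B_zL^{4/3}(m)}\lesssim_m\Lambda\|\nabla_\xi w_c\|_{B_zL^2(m)}$. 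Summing yields $\|(v_c\cdot\sdv)w_c - (w_c\cdot\sdv)v_c\|_{B_zL^{4/3}(m)}\lesssim_{m,\Lambda}\|\sdv w_c\|_{B_zL^2(m)}$, with no solitary $w_c^z$ on the right.

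Feeding this into the Duhamel bound and restricting to $\tau<\tau_0+T$ gives, exactly as in (\ref{eqn3}),
\begin{equation*}
\|\sdv_\tau w_c(\tau)\|_{B_zL^2(m)} \le \frac{Ce^{\mu T}}{a(\tau-\tau_0)^{1/2}} + Ce^{\mu T}\int_{\tau_0}^\tau \frac{\|\sdv_s w_c(s)\|_{B_zL^2(m)}}{a(\tau-s)^{3/4}}\ ds,
\end{equation*}
with $C=C(\Lambda,m)$. I would then multiply through by $a(\tau-\tau_0)^{1/2}$, take the supremum over $\tau\in(\tau_0,\tau_0+T)$, and invoke the kernel computation (\ref{I1})--(\ref{I2}) showing $I(\tau_0,T)\approx T^{1/4}$ is small for small $T$, so that for $T$ depending only on $\Lambda$ and $m$ the integral term is absorbed, giving $\sup_{\tau_0<\tau<\tau_0+T}a(\tau-\tau_0)^{1/2}\|\sdv_\tau w_c(\tau)\|_{B_zL^2(m)}\le 2Ce^{\mu T}$ uniformly in $\tau_0$, hence the claimed uniform bound. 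I expect no serious obstacle: the only new verifications are that $v_c$ is the Biot-Savart field of $w_c$ and that (\ref{vector1}) scales like (\ref{scalar1}); all of the $\alpha G$-dependent drift and stretching has been packaged into $S_\alpha$, so it contributes nothing beyond the short-time-bounded factor $e^{\mu T}$. Note also that only Assumption \ref{assumptioni2} is used here; Assumptions \ref{assumptionii2}--\ref{assumptioniv2} enter only in the later decay and compactness arguments.
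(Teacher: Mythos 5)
Your proposal is correct and follows essentially the same route as the paper: the paper's proof likewise writes the Duhamel formula for $\sdv_\tau w_c$ with $S_\alpha$ in place of $S_0$, invokes the gradient estimate (\ref{vector1}) (which has the same $a(\tau-\tau_0)^{-1/2}$ and $a(\tau-s)^{-3/4}$ scaling as (\ref{scalar1})), and then notes that the argument of Lemma \ref{lem7} goes through verbatim since the propagator bounds use no vector structure. Your additional remark that $v_c=-\sdv\times(\slap)^{-1}w_c$ by linearity of the Biot-Savart law is a correct (and worthwhile) verification that the paper leaves implicit.
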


\begin{proof}
This proved as in Lemma \ref{lem7}. In particular, we recall that $\sdv w_c$ solves the integral equation
\begin{equation}
\sdv_\tau w_c(\tau) = \sdv_\tau S_\alpha(\tau,\tau_0)w_c(\tau_0)  - \int_{\tau_0}^\tau \sdv_\tau S_\alpha(\tau,s)\left[ (v_c \cdot \sdv_s)w_c - (w_c \cdot \sdv_s)v_c\right](s) \ ds.
\end{equation}
Then, we estimate in $B_zL^2(m)$ and use the linear propagator estimates on $S_\alpha$ from Proposition \ref{prop2}, namely,
\begin{equation}\label{bound7}
\|\sdv_\tau S_\alpha(\tau,\tau_0)w_c\|_{B_zL^2(m)} \lesssim_{\alpha,m,\gamma,p} \frac{e^{\gamma(\tau - \tau_0)}}{a(\tau - \tau_0)^{\frac{1}{p}}} \|w_c\|_{B_zL^{p}(m)}
\end{equation}
for $p = 2$ or $p = 4/3$. Now, since (\ref{bound7}) does not use any particular vector structure, the proof proceeds identically to the proof of Lemma \ref{lem7} from here.
\end{proof}

\begin{lemma}\label{lem19}
Suppose $w_c(\tau)$ is a solution (\ref{coremild}) satisfying Assumptions \ref{assumptioni2}, \ref{assumptioniii2}, and \ref{assumptioniv2}. Then, for each $2 < m^\prime < m$, $\{w_c(\tau)\}_{\tau < \tau^*}$ is precompact in $B_zL^2(m^\prime)$.
\end{lemma}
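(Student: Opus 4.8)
The plan is to reduce to the genuinely new difficulty — compactness in the \emph{continuous} frequency variable $\zeta$ — and to resolve it with the vector-valued Fr\'echet--Kolmogorov (Riesz--Kolmogorov) compactness criterion. Recall that for $D = \R$ the space $B_zL^2(m)$ is exactly $L^1(\R_\zeta; L^2_\xi(m))$, and that the torus argument of Lemma \ref{lem8} exploited that $\zeta \in \Z$ is discrete, so that a uniform bound plus diagonalization over the countably many frequencies sufficed. For $\zeta \in \R$ this is unavailable, so instead I would verify, for $g(\tau,\zeta) := \widehat{w_c^z}(\tau,\zeta) \in L^2_\xi(m)$, the four hypotheses of Fr\'echet--Kolmogorov in $L^1(\R_\zeta; L^2_\xi(m'))$: (1) a uniform bound $\sup_{\tau<\tau^*}\int_\R \norm{g(\tau,\zeta)}_{L^2(m')}\,d\zeta < \infty$; (2) $L^1$-equicontinuity of $\zeta$-translations, uniform in $\tau$; (3) tightness at $\abs{\zeta}=\infty$, uniform in $\tau$; and (4) for each bounded measurable $A \subset \R$, relative compactness in $L^2(m')$ of the averages $\set{\int_A g(\tau,\zeta)\,d\zeta : \tau < \tau^*}$.

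I would then check each hypothesis, using throughout the continuous inclusion $L^2(m) \embeds L^2(m')$ (valid since $m' < m$). Hypothesis (1) is immediate from Assumption \ref{assumptioni2}. Hypothesis (4), the only genuinely infinite-dimensional ingredient, follows from Lemma \ref{lem18}: since $\int_A \norm{g(\tau,\zeta)}_{H^1(m)}\,d\zeta \le \norm{w_c^z(\tau)}_{B_zL^2(m)} + \norm{\nabla_\xi w_c^z(\tau)}_{B_zL^2(m)} \le C(\Lambda,m)$, the averages $\int_A g(\tau,\zeta)\,d\zeta$ lie in a fixed bounded subset of $H^1(m)$, which embeds compactly into $L^2(m')$ by Lemma \ref{lem3}. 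Hypothesis (3) comes from Assumption \ref{assumptioniii2} via $\int_{\abs{\zeta}>R}\norm{g(\tau,\zeta)}_{L^2(m)}\,d\zeta \le R^{-1}\norm{\partial_z w_c^z(\tau)}_{B_zL^2(m)}$. Hypothesis (2) is exactly where Assumption \ref{assumptioniv2} enters: with the paper's Fourier convention $\widehat{z w_c^z}(\tau,\zeta) = i\,\partial_\zeta g(\tau,\zeta)$, so $\int_\R \norm{\partial_\zeta g(\tau,\zeta)}_{L^2(m)}\,d\zeta = \norm{z w_c^z(\tau)}_{B_zL^2(m)}$ is bounded uniformly in $\tau$, whence $\int_\R \norm{g(\tau,\zeta+h) - g(\tau,\zeta)}_{L^2(m)}\,d\zeta \le \abs{h}\,\norm{z w_c^z(\tau)}_{B_zL^2(m)} \to 0$ as $h\to 0$, uniformly in $\tau$. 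Fr\'echet--Kolmogorov then gives precompactness of $\{w_c^z(\tau)\}_{\tau<\tau^*}$ in $B_zL^2(m')$. The $\xi$-component, and hence the full $\{w_c(\tau)\}_{\tau<\tau^*}$, is precompact once one also has the decay $w_c^\xi \to 0$ in $B_zL^2(m)$ (the analogue of Lemma \ref{lem6} under Assumption \ref{assumptionii2}), since a trajectory converging to a single point is trivially precompact.

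The main obstacle is precisely the passage from the discrete to the continuous frequency variable. In the torus case, boundedness of $\sum_\zeta \abs{\zeta}\norm{f_n(\zeta)}_{L^2(m)}$ simultaneously supplied tightness and, through summability together with the per-slice $H^1$ bound, full compactness by diagonalization. On $\R$ these effects decouple: tightness at high $\zeta$ (Assumption \ref{assumptioniii2}) no longer controls oscillation in $\zeta$, so one must \emph{separately} enforce $\zeta$-equicontinuity, which is exactly the purpose of the new Assumption \ref{assumptioniv2} (a bound on $z w_c^z$, i.e.\ on $\partial_\zeta \widehat{w_c^z}$ in $L^1_\zeta L^2_\xi(m)$). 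The most delicate bookkeeping will be to confirm all four Fr\'echet--Kolmogorov hypotheses with constants uniform in $\tau < \tau^*$, and in particular to produce hypothesis (4) from the parabolic $\xi$-regularity of Lemma \ref{lem18} rather than from any control in the $z$-direction, which parabolic smoothing only provides with the degenerate weight $e^{\tau/2}$.
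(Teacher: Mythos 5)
Your proof is correct, and it reaches the same conclusion by a closely parallel but technically different route. The paper treats $\zeta$ as the ``time'' variable in Simon's $L^1$ version of the Aubin--Lions lemma, applied to the triple $H^1(m) \embeds L^2(m^\prime) \embeds L^2(m^\prime)$ on $[-Z,Z]$, and then upgrades local-in-$\zeta$ compactness to global via the tightness supplied by Assumption (iii); you instead verify the four hypotheses of the vector-valued Fr\'echet--Kolmogorov criterion in $L^1(\R_\zeta;L^2_\xi(m^\prime))$ directly. The roles of the assumptions are identical in both arguments: Assumption (i) plus the gradient bound of Lemma \ref{lem18} supplies the $\xi$-compactness (your hypothesis (4), the paper's $L^1_\zeta H^1_\xi(m)$ bound feeding the compact embedding $H^1(m)\embeds L^2(m^\prime)$); Assumption (iii) gives tightness at $\abs{\zeta}=\infty$; and Assumption (iv), via $\widehat{z w_c^z} = i\,\partial_\zeta \widehat{w_c^z}$, gives the $\zeta$-equicontinuity that replaces the diagonalization over the discrete frequencies of Lemma \ref{lem8}. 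What your version buys is that the mechanism is laid bare --- in particular your identification of where Assumption (iv) enters is exactly the point the paper makes implicitly by listing $\norm{\partial_\zeta \widehat{w}_c^z}_{L^1_\zeta L^2_\xi(m^\prime)} \lesssim 1$ as an input to Simon's lemma; what the paper's version buys is brevity, since Simon's theorem packages translations, local compactness, and the limiting argument in one citation. One small remark: your final sentence handles the $\xi$-component using Assumption (ii), which is not among the hypotheses listed in the lemma (the paper's own proof silently addresses only $w_c^z$); since Assumption (ii) is literally the statement that $w_c^\xi(\tau) \to 0$ in $B_zL^2(m)$, no auxiliary lemma is needed there --- a trajectory converging in norm is automatically precompact.
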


\begin{proof}
Fix $2 < m^\prime < m$,  $\{\tau_n\}_{n=1}^\infty$ an arbitrary sequence of times, and let $\hat{w}_c^z(\tau,\zeta,\xi)$ denote the Fourier transform in $z$ of $w_c^z$, where $\zeta$ is the corresponding frequency variable. We note that $\|w_c^z(\tau)\|_{B_zL^2(m)} = \|\hat{w}_c^z(\tau)\|_{L_\zeta^1L_\xi^2(m)}$ and Assumption \ref{assumptioniii2} and Assumption \ref{assumptioniv2} imply
\begin{equation}\label{assumptions}
\|\zeta\hat{w}_c^z(\zeta)\|_{L_\zeta^1L_\xi^2(m^\prime)} \lesssim 1 \qquad\text{and}\qquad \|\partial_\zeta\hat{w}_c^z(\zeta)\|_{L_\zeta^1L_\xi^2(m^\prime)} \lesssim 1.
\end{equation}
Thus, we apply the Aubin-Lions-Simon Lemma (proved for the $L^1$ case by Simon in \cite{simon}) to the triple $$H^1(m) \embeds L^2(m^\prime) \embeds L^2(m^\prime)$$ to conclude that the embedding
$$\{L^1_\zeta([-Z,Z];H^1_\xi(m)) \ | \ \partial_\zeta \hat{w}_c^z \in L^1_\zeta([-Z,Z];L_\xi^2(m^\prime))\} \embeds L^1_\zeta([-Z,Z];L^2_\xi(m^\prime))$$
is compact for each $m>2$, $Z\in \N$. Because $\hat{w}_c^z$ satisfies the bounds in (\ref{assumptions}), we may use this compact embedding in conjunction with a diagonalization argument to obtain a subsequence $\{\tau_n\}$ and limit $\hat{w}$ such that $\{\hat{w}_c^z(\tau_n)\}$ converges to $\hat{w}$ in $L^1_{loc,\zeta}L^2_\xi(m^\prime)$. Again using the (tightness) bound in(\ref{assumptions}), we conclude that the convergence is in $L^1_\zeta L^2_\xi(m^\prime)$, or equivalently, $w_c^z(\tau_n) \rightarrow w$ in $B_zL^2(m^\prime)$.
\end{proof}

\begin{remark}
While the proof of the parabolic regularity in Lemma \ref{lem18} is almost identical to that in Lemma \ref{lem6}, the statement that $w^\xi_c \rightarrow 0$ implies $\sdv w^\xi_c \rightarrow 0$ requires some changes. Note that $S^\alpha$ mixes $w^\xi$ and $w^z$ in an a priori complicated way. As discussed at the beginning of Section 5.2, the purpose of the semigroup asymptotics in Lemmas \ref{lem17} and \ref{lem18} is exactly to replace $S_\alpha$ with $(\Gamma_\alpha,\Tau_\alpha)$, which is block diagonal.
\end{remark}

We begin to prove the $\sdv w^\xi_c \rightarrow 0$ by isolating the following compactness argument which we will need again.
\begin{lemma}\label{lem20}
Let $\{w_c\}_{\tau < \tau^*}$ be a precompact set in $B_zL^2(m)^3$ for some $m >2$. Then,
\begin{equation}
U = \{v_c(s) \tens w_c(s) - w_c(s) \tens v_c(s)\}_{s < \tau^*}
\end{equation}
is precompact as a subset of $B_zL^{4/3}(m)^9$.
\end{lemma}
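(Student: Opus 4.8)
The plan is to reduce the statement to the precompactness of the velocity trajectory $\{v_c(s)\}_{s<\tau^*}$ in $B_zL^4$ and then exploit the bilinear structure, exactly as in the two‑dimensional argument of Lemma \ref{lem11} but with the extra wrinkle that here the Biot–Savart operator is time dependent. First I would record, via the $B_z$ H\"older inequality (Lemma \ref{lem1}) with the weight $\brak{\xi}^m$ placed entirely on the vorticity factor, the bilinear bound
\[
\norm{a \tens b - b \tens a}_{B_zL^{4/3}(m)} \lesssim_m \norm{a}_{B_zL^4}\norm{b}_{B_zL^2(m)},
\]
so that $\mathcal{B}(a,b) = a\tens b - b\tens a$ is a bounded bilinear, hence (jointly) continuous, map $B_zL^4 \times B_zL^2(m)^3 \to B_zL^{4/3}(m)^9$. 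Since a bounded bilinear map sends pairs of convergent sequences (with one factor bounded) to convergent sequences, it suffices to show that every sequence of times $\{s_n\}$ admits a subsequence along which $w_c(s_n)$ converges in $B_zL^2(m)^3$, which is immediate from the precompactness hypothesis, and $v_c(s_n)$ converges in $B_zL^4$.

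So I fix a sequence $s_n$ and, using the hypothesis together with compactness of the extended half‑line, pass to a subsequence such that $w_c(s_n) \to w_*$ in $B_zL^2(m)^3$ and $s_n \to s_\infty \in [-\infty,\tau^*]$. The only genuine obstacle is that $v_c(s) = -\sdv_s \times (\slap_s)^{-1} w_c(s)$ depends on $s$ through $\slap_s = \Delta_\xi + e^s\partial_{zz}$ and $\sdv_s$, so $\{v_c(s)\}$ is \emph{not} the image of a precompact set under one fixed operator. To handle this I would split
\[
v_c(s_n) = \mathcal{R}_{s_n}(w_*) + \mathcal{R}_{s_n}\big(w_c(s_n) - w_*\big),
\]
where $\mathcal{R}_s$ denotes the time‑$s$ Biot–Savart map. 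The error term is controlled uniformly by Lemma \ref{lem2} and the embedding $B_zL^2(m)\embeds B_zL^{4/3}$ (valid for $m>2$): its $B_zL^4$ norm is $\lesssim_m \norm{w_c(s_n) - w_*}_{B_zL^2(m)} \to 0$. Thus everything reduces to the convergence of $\mathcal{R}_{s_n}(w_*)$ in $B_zL^4$ for the single fixed field $w_*$.

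For this last point I would work on the Fourier side in $z$. Writing $\widehat{\mathcal{R}_{s_n}(w_*)}(\zeta) = M_{s_n}(\zeta)\hat{w}_*(\zeta)$, the explicit form of the Biot–Savart law in (\ref{biotsavartstructure}) shows that, for each fixed $\zeta$, the multiplier $M_s(\zeta)$ depends continuously on $s$ through the resolvents $(\Delta_\xi - e^s|\zeta|^2)^{-1}$ and the factors $e^{s/2}|\zeta|$, and converges as $s \to -\infty$ to the pure two‑dimensional Biot–Savart multiplier; the quantitative rate is exactly what Lemma \ref{lem14} (and Lemma \ref{lem10}) supply, giving $\norm{\big(M_{s_n}(\zeta) - M_{s_\infty}(\zeta)\big)\hat{w}_*(\zeta)}_{L^4_\xi} \to 0$ pointwise in $\zeta$ (with the case $s_\infty = +\infty$, should $\tau^* = \infty$, giving the limiting operator $0$). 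Since the frequency‑localized Biot–Savart bound underlying Lemma \ref{lem2} is uniform in $s$ and $\zeta$, we have the domination $\norm{M_{s_n}(\zeta)\hat{w}_*(\zeta)}_{L^4_\xi} \lesssim_m \norm{\hat{w}_*(\zeta)}_{L^2_\xi(m)}$, and the right‑hand side lies in $L^1_\zeta$ because $w_* \in B_zL^2(m)$. The dominated convergence theorem in $\zeta$ then yields $\mathcal{R}_{s_n}(w_*) \to \mathcal{R}_{s_\infty}(w_*)$ in $B_zL^4 = L^1_\zeta L^4_\xi$.

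Combining the three ingredients, $v_c(s_n)$ converges in $B_zL^4$, so by the continuity of $\mathcal{B}$ the products $\mathcal{B}\big(v_c(s_n), w_c(s_n)\big)$ converge in $B_zL^{4/3}(m)^9$; as $\{s_n\}$ was arbitrary, $U$ is precompact. I expect the reconciliation of the moving Biot–Savart operator with the fixed target space $B_zL^4$ to be the main obstacle—it is the one point with no analogue in the two‑dimensional setting of Lemma \ref{lem11}, where $\Delta_\xi^{-1}$ is time independent—and the frequency‑by‑frequency convergence estimates of Lemma \ref{lem14} are precisely what resolve it.
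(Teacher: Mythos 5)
Your proposal is correct, but it takes a genuinely different route from the paper's proof. The paper keeps the bilinear map $\F_s(f,g) = \sdv_s\times(\slap_s)^{-1}f\tens g - g\tens \sdv_s\times(\slap_s)^{-1}f$ intact, shows that the family $\{\F_s\}_{s<\tau^*}$ is uniformly Lipschitz on $\overline{K}\times\overline{K}$, invokes Arzel\`a--Ascoli to get precompactness of this family in $C(\overline{K}\times\overline{K};B_zL^{4/3}(m)^9)$, and then runs a covering/total-boundedness argument to conclude that $\bigcup_{s<\tau^*}\F_s(K,K)$ is totally bounded. You instead argue sequentially: after extracting $w_c(s_n)\to w_*$ and $s_n\to s_\infty$, you isolate the time dependence in the Biot--Savart operator, kill the error term $\mathcal{R}_{s_n}(w_c(s_n)-w_*)$ using the $s$-uniform bound of Lemma \ref{lem2}, and prove convergence of $\mathcal{R}_{s_n}(w_*)$ by dominated convergence in $\zeta$ via the frequency-localized estimates underlying Lemmas \ref{lem10} and \ref{lem14}. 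Your route is more concrete, and it in fact supplies explicitly the ingredient that the vector-valued Arzel\`a--Ascoli theorem needs but the paper leaves implicit, namely pointwise relative compactness of $\{\F_s(f,g)\}_{s<\tau^*}$ for fixed $(f,g)$ --- which is precisely the subsequential convergence of the time-dependent Biot--Savart operators that you establish. What the paper's approach buys is brevity and reusability (the covering argument applies to any uniformly Lipschitz, pointwise precompact family of maps on a compact set); what yours buys is an explicit identification of the limit (the 2d Biot--Savart multiplier when $s_\infty=-\infty$, zero when $s_\infty=+\infty$) at the cost of a longer frequency-side computation. Two minor points to tidy up in a written version: Lemma \ref{lem14} states its conclusions in $B_zL^\infty$ and $B_zL^2$, so the pointwise-in-$\zeta$ convergence in $L^4_\xi$ should be obtained by interpolating $\norm{\cdot}_{L^4}\le \norm{\cdot}_{L^2}^{1/2}\norm{\cdot}_{L^\infty}^{1/2}$ or by quoting the frequency-localized resolvent estimates directly; and when $D=\Real$ the exceptional frequency $\zeta=0$ in the $s_\infty=+\infty$ case is harmless because it has Lebesgue measure zero in the $d\zeta$ integral.
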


\begin{proof}
First, we define a bilinear form for each $s$, $\F_s: B_zL^2(m)^3\oplus B_zL^2(m)^3 \rightarrow B_zL^{4/3}(m)^9$ by
\begin{equation}
\F_s(f,g) = \sdv_s \times (\slap_s)^{-1}f \tens g - g\tens \sdv_s \times (\slap_s)^{-1}f.
\end{equation}
By definition, we see that for $K = \{w_c(\tau)\}_{\tau < \tau^*}$,
$$U \subset \bigcup_{s < \tau^*} \F_s(K,K).$$
Therefore, it suffices to show that the larger set on the right hand side is precompact.

Second, we note that by H\"older's inequality, the Biot-Savart law (\ref{biotsavart}), and the embedding, \linebreak $B_zL^{4/3}\embeds B_zL^2(m)$, we have the estimate,
\begin{equation}
\begin{aligned}
\|\F_s(f_1 - f_2,g_1 - g_2)\|_{B_zL^{4/3}(m)} \lesssim_m& \|f_1 - f_2\|_{B_zL^2(m)}\left(\|g_1\|_{B_zL^2(m)} + \|g_2\|_{B_zL^2(m)}\right)\\
	& + \|g_1 - g_2\|_{B_zL^2(m)}\left(\|f_1\|_{B_zL^2(m)} + \|f_2\|_{B_zL^2(m)}\right),
\end{aligned}
\end{equation}
where the implicit constant is independent of $s$. This implies that restricting $\F_s$ to the compact set $\overline{K}\times \overline{K}$ gives a family of Lipschitz functions with Lipschitz constant uniformly bounded in $s$.

Third, we will show that the compactness of $\overline{K}$ implies that the set
$$\bigcup_{s < \tau^*} \F_s(K,K)$$
is totally bounded in $B_zL^{4/3}(m)^9$ and hence precompact.
Fix $\epsilon > 0$ arbitrary. Now, by Arzela-Ascoli, $\{\F_s\}_{s < \tau^*}$ is precompact as a subset of $C(\overline{K}\times \overline{K};B_zL^{4/3}(m)^9)$.
So, there is a collection of $N_\epsilon$ balls $B_{\epsilon/2}(c_i) \subset C(\overline{K}\times \overline{K};B_zL^{4/3}(m)^3)$ such that $\{B_{\epsilon/2}(c_i)\}$ covers $\{\F_s\}_{s < \tau^*}$.
Look at any $c_i$ and note $c_i(K,K)$ is precompact as a subset of $B_zL^{4/3}(m)^9$ since $c_i$ is continuous.
Therefore, there are $N_\epsilon^i$ open balls $B_{\epsilon/2}(d_j^i) \subset B_zL^{4/3}(m)^3$ covering $c_i(K,K)$.

Note, since $e \in B_{\epsilon/2}(c_i)$ implies for each $h\in K$,
$\|e(h,h) - c_i(h,h)\|_{B_zL^{4/3}(m)} < \epsilon/2$
and $c_i(h,h) \in B_{\epsilon/2}(d_j^i)$ for some $1\le j \le N_\epsilon^i$, $e(h,h) \in B_{\epsilon}(d_j^i)$.

Thus, $U$ is covered by the finite collection $\{B_\epsilon(d_j^i)\}_{i,j}$ of balls of radius $\epsilon$. Since $\epsilon > 0$ was arbitrary, $U$ is precompact.
\end{proof}

\begin{lemma}\label{lem22}
Let $w_c$ be a mild solution to (\ref{coremild}) such that for some $m > 2$, $\tau^* > -\infty$, and $\Lambda > 0$,
\begin{equation}
\sup_{\tau < \tau^*} \|w_c\|_{B_zL^2(m)} \le \Lambda.
\end{equation}
Moreover, suppose that $w^\xi_c \rightarrow 0$ in $B_zL^2(m)$ and $\{w_c^z\}_{\tau < \tau^*}$ is compact in $B_zL^2(m)$. Then, $\sdv w^\xi_c \rightarrow 0$ in $B_zL^2(m)$ and, furthermore, $\sdz w^z_c \rightarrow 0$ in $B_zL^2(m)$.
\end{lemma}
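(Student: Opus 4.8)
The plan is to run the same parabolic bootstrap as in Lemma \ref{lem6}, but to neutralize the fact that $S_\alpha$ is \emph{not} block diagonal by invoking the semigroup asymptotics of Lemmas \ref{lem16} and \ref{lem17}, which asymptotically decouple the $w_c^\xi$ evolution from $w_c^z$. Applying $\sdv_\tau$ to the $\xi$-component of (\ref{coremild}) and writing the quadratic nonlinearity in divergence form $\divz F$ with $F = v_c \tens w_c - w_c \tens v_c$ (antisymmetric, hence $\divz\divz F = 0$), I fix $T > 0$ and write, for each $\tau_0$,
\begin{equation*}
\sdv w_c^\xi(\tau_0+T) = \sdv S_\alpha^\xi(\tau_0+T,\tau_0)w_c(\tau_0) - \int_{\tau_0}^{\tau_0+T}\sdv S_\alpha^\xi(\tau_0+T,s)\divz_s F(s)\,ds.
\end{equation*}
Throughout I would use the a priori bound $\sup_\tau\|\sdv w_c(\tau)\|_{B_zL^2(m)} \le C(\Lambda)$ from Lemma \ref{lem18}, the precompactness of $\{w_c(\tau)\}_{\tau<\tau^*}$ in $B_zL^2(m)^3$ (from continuity together with $w_c^\xi \to 0$ and compactness of $\{w_c^z\}$), and the consequent precompactness of $\{F(\tau)\}_{\tau<\tau^*}$ in $B_zL^{4/3}(m)^9$ from Lemma \ref{lem20}.

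For the initial-data term I would apply (\ref{conv4}) of Lemma \ref{lem17}, uniformly over the precompact set $\{w_c(\tau_0)\}$, to replace $\nabla_\xi S_\alpha^\xi(\tau_0+T,\tau_0)w_c(\tau_0)$ by the block-diagonal $\nabla_\xi\Gamma_\alpha(T)w_c^\xi(\tau_0)$, which vanishes since $\|\nabla_\xi\Gamma_\alpha(T)w_c^\xi(\tau_0)\|_{B_zL^2(m)} \lesssim_T \|w_c^\xi(\tau_0)\|_{B_zL^2(m)} \to 0$ by Proposition \ref{prop1}; the $\sdz$-component is killed directly by the $e^{(\tau+\tau_0)/2}\partial_z$ factor appearing in (\ref{conv4}).

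The main obstacle is the integral term: because $S_\alpha^\xi$ couples to $w_c^z$, which does \emph{not} vanish, the bound used in Lemma \ref{lem6} (where $S_0$ was block diagonal and the $\xi$-nonlinearity was controlled purely by $w_c^\xi$) is unavailable. I would resolve this by splitting the integral at a gap-threshold $\epsilon$. On the far piece $s\in[\tau_0,\tau_0+T-\epsilon]$, where the gap $\tau_0+T-s$ lies in the compact set $[\epsilon,T]\subset(0,\infty)$, I apply the divergence-form asymptotics (\ref{conv6}) to replace $\sdv S_\alpha^\xi(\tau_0+T,s)\divz F(s)$ by $\nabla_\xi\Gamma_\alpha(\tau_0+T-s)\div_\xi F^{\xi,\xi}(s)$ (plus a $\sdz$-part tending to $0$); since $F^{\xi,\xi} = v_c^\xi\tens w_c^\xi - w_c^\xi\tens v_c^\xi$ carries a factor of $w_c^\xi$, one has $\|F^{\xi,\xi}(s)\|_{B_zL^{4/3}(m)} \lesssim_\Lambda \|w_c^\xi(s)\|_{B_zL^2(m)} \to 0$, and because the gap is bounded below by $\epsilon$ the two-derivative operator $\nabla_\xi\Gamma_\alpha\div_\xi$ is uniformly bounded there, so the far piece tends to $0$ for each fixed $\epsilon$. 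On the near piece $s\in[\tau_0+T-\epsilon,\tau_0+T]$ I instead bound the full nonlinearity crudely in $B_zL^{4/3}(m)$ by $C(\Lambda)$ (Hölder, the Biot--Savart and Riesz bounds, and Lemma \ref{lem18}) and use the direct gradient estimate (\ref{vector1}) with $p=2$, $q=4/3$, whose singularity $a(\cdot)^{-3/4}$ is integrable, yielding a bound $\lesssim C(\Lambda)\epsilon^{1/4}$ uniform in $\tau_0$. Combining the three contributions gives $\limsup_{\tau_0\to-\infty}\|\sdv w_c^\xi(\tau_0+T)\|_{B_zL^2(m)} \lesssim C(\Lambda)\epsilon^{1/4}$ for every $\epsilon>0$, whence $\sdv w_c^\xi\to 0$ with no Grönwall argument needed.

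Finally, $\sdz w_c^z\to 0$ follows from the divergence-free condition: since $\sdz(\alpha G)=0$ we have $\sdz w_c^z = -\nabla_\xi\cdot w_c^\xi$, and $\nabla_\xi w_c^\xi\to 0$ is part of $\sdv w_c^\xi\to 0$. The essential difficulty is the tension between block-diagonalization (legitimate only for gaps bounded away from $0$) and the non-integrability of the gradient estimates near the endpoint; the $\epsilon$-split reconciles these by trading the non-vanishing of the full nonlinearity on the near piece against the smallness of the time window.
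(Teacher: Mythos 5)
Your proof is correct, and it reaches the conclusion by a genuinely different mechanism for the Duhamel integral than the paper's. The paper replaces $S_\alpha^\xi$ by $\Gamma_\alpha$ in \emph{both} the initial-data term and the full integrand (producing error terms controlled by Lemmas \ref{lem16}, \ref{lem17} and \ref{lem20}), reduces to an equation driven only by $\nabla_\xi\Gamma_\alpha$ acting on quantities carrying a factor of $w_c^\xi$, and then closes with the same Gr\"onwall bootstrap as Lemma \ref{lem6}: for $T$ small the integrable $a(\tau-s)^{-3/4}$ singularity lets one absorb $\|\sdv w_c^\xi\|$ from the right-hand side. You instead split the integral at a gap threshold $\epsilon$: on the far piece the replacement lemmas apply exactly as stated (gap in the compact set $[\epsilon,T]\subset(0,\infty)$), and the surviving term $\nabla_\xi\Gamma_\alpha\div_\xi F^{\xi,\xi}$ vanishes because every entry of $F^{\xi,\xi}$ carries a factor of $w_c^\xi\to 0$; on the near piece you forgo the replacement entirely and use the non-divergence form with (\ref{vector1}) and the a priori bound of Lemma \ref{lem18} to get $O(\epsilon^{1/4})$ uniformly in $\tau_0$, trading the Gr\"onwall absorption for an extra $\epsilon\to 0$ limit. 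What your version buys is a cleaner handling of the endpoint $s\to\tau$: the paper's claim that $\sup_{\tau_0<s<\tau<\tau_0+T}\|I_2(\tau,s)\|\to 0$ is not literally covered by (\ref{conv6}), which is stated only for gaps in compact subsets of $(0,\infty)$ (and in divergence form the individual operators carry the non-integrable singularity $a(\tau-s)^{-5/4}$), whereas your $\epsilon$-split confines the asymptotic lemmas to the regime where they are proved. Two small points to make explicit in a full writeup: the uniform boundedness of the composed operator $\nabla_\xi\Gamma_\alpha(\sigma)\div_\xi$ for $\sigma\in[\epsilon,T]$ is not a stated estimate in Proposition \ref{prop1} but follows from the semigroup property via $\nabla_\xi\Gamma_\alpha(\sigma/2)\circ\Gamma_\alpha(\sigma/2)\div_\xi$; and the precompactness of $\{w_c^\xi(\tau)\}_{\tau<\tau^*}$ needs a word about sequences with $\tau_n\to\tau^*$ (harmless, since one may shrink $\tau^*$, and the paper elides the same point). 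Your final step, $\sdz w_c^z=-\nabla_\xi\cdot w_c^\xi\to 0$ via the divergence-free condition and the $z$-independence of $G$, coincides with the paper's.
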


\begin{proof}
For clarity, we prove the $\nabla_\xi w_c^\xi \rightarrow 0$ and $\sdz w_c^\xi \rightarrow 0$ separately. We begin by recalling that $\nabla_\xi w_c^\xi$ satisfies the integral equation
\begin{equation}
\nabla_\xi w_c^\xi(\tau) = \nabla_\xi S^\xi_\alpha(\tau,\tau_0)w_c(\tau_0) - \int_{\tau_0}^{\tau} \nabla_\xi S_\alpha^\xi(\tau,s)\divz_s \left(w_c\tens v_c - v_c \tens w_c\right) \ ds,
\end{equation}
for each $\tau_0  < \tau < \tau^*$.
Now, we replace $S^\xi_\alpha$ with $\Gamma_\alpha$ to obtain
\begin{equation}
\begin{aligned}
\nabla_\xi w_c^\xi(\tau) &= \left(\nabla_\xi S^\xi_\alpha(\tau,\tau_0)w_c(\tau_0) - \nabla_\xi \Gamma_\alpha(\tau -\tau_0)w_c^\xi(\tau_0)\right) +  \nabla_\xi \Gamma_\alpha(\tau - \tau_0)w_c^\xi(\tau_0)\\
	& - \int_{\tau_0}^{\tau} \nabla_\xi S_\alpha^\xi(\tau,s)\divz_s \left(w_c\tens v_c - v_c \tens w_c\right) - \nabla_\xi \Gamma_\alpha(\tau,s)\div_\xi \left(w_c^{\xi}\tens v_c^{\xi} - v_c^{\xi} \tens w_c^{\xi}\right) \ ds\\
	& - \int_{\tau_0}^\tau \nabla_\xi \Gamma_\alpha(\tau,s)\div_\xi \left(w_c^{\xi}\tens v_c^{\xi} - v_c^{\xi} \tens w_c^{\xi}\right) \ ds\\
	&=: I_1(\tau, \tau_0) - \int_{\tau_0}^\tau I_2(\tau,s)\ ds\\
	&\qquad+ \nabla_\xi \Gamma_\alpha(\tau - \tau_0)w_c^\xi(\tau_0) - \int_{\tau_0}^\tau \nabla_\xi \Gamma_\alpha(\tau,s)\div_\xi \left(w_c^{\xi}\tens v_c^{\xi} - v_c^{\xi} \tens w_c^{\xi}\right) \ ds,
\end{aligned}
\end{equation}
where we obtain two error terms $I_1$ and $I_2$ coming from the difference in linear propagators. We control both $I_1$ and $I_2$ in $B_zL^2(m)$ using Lemma \ref{lem16} and Lemma \ref{lem17}.
First, we use Lemma \ref{lem16}
and the compactness of $\{w_c\}_{\tau < \tau^*}$ to conclude that additional error term satisfies for $T > 0$,
$$\lim_{\tau_0\rightarrow -\infty} \sup_{\tau_0 < \tau < \tau_0 + T} \|I_1(\tau,\tau_0)\|_{B_zL^2(m)} = 0.$$
Second, we use Lemma \ref{lem20} to conclude that the nonlinearity $\left\{w_c(s) \tens v_c(s) - v_c(s)\tens w_c(s)\right\}_{s<\tau^*}$ is compact in $B_zL^{4/3}(m)$ and, therefore, by Lemma \ref{lem17},
$$\lim_{\tau_0\rightarrow -\infty} \sup_{\tau_0 < s < \tau < \tau_0 + T} \|I_2(\tau,s)\|_{B_zL^2(m)} = 0,$$ for $T>0$. This implies that the sequence of real-valued functions
$$(s,\tau)\mapsto \|\chi_{[-n,-n+T]}(s)\chi_{[-n,-n+T]}(\tau)I_2(\tau,s)\|_{B_zL^2(m)}$$
converge uniformly to $0$, where $\chi_A$ denotes the indicator function of $A$. Thus, the integral error term converges uniformly to $0$ and we conclude that $\nabla_\xi w^\xi_c$ satisfies
\begin{equation}
\nabla_\xi w_c^\xi(\tau) =\mathcal{E}(\tau,\tau_0) + \nabla_\xi \Gamma_\alpha(\tau - \tau_0)w_c^\xi(\tau_0) - \int_{\tau_0}^\tau \nabla_\xi \Gamma_\alpha(\tau,s)\div_\xi \left(w_c^{\xi} \tens v_c^{\xi} - v_c^{\xi}\tens w_c^{\xi}\right) \ ds,
\end{equation}
where
$$\lim_{\tau_0\rightarrow -\infty}\sup_{\tau_0 < \tau < \tau_0 + T} \mathcal{E}(\tau,\tau_0) = 0,$$
for any $T > 0$. From here, the proof proceeds as in the proof of Lemma \ref{lem6} with minor modifications.
Similarly, $\sdz w_c^\xi$ satisfies
\begin{equation}
\sdz w_c^\xi(\tau) = \sdz S^\xi_\alpha(\tau,\tau_0)w_c(\tau_0) - \int_{\tau_0}^{\tau} \sdz S_\alpha^\xi(\tau,s)\divz_s \left(w_c \tens v_c - v_c\tens w_c\right) \ ds.
\end{equation}
Now, since $\{w_c(\tau)\}_{\tau < s}$ is compact, a direct application of the convergence statement (\ref{conv4}) in Lemma \ref{lem17}, yields
\begin{equation}
\lim_{\tau_0\rightarrow -\infty} \sup_{\tau_0 < \tau < \tau_0 + T}\|\sdz S^\xi_\alpha(\tau,\tau_0)w_c(\tau_0)\|_{B_zL^2(m)} = 0,
\end{equation}
for any $T > 0$. Moreover, since $\{w_c \tens v_c - v_c\tens w_c\}_{s <\tau^*}$ is compact in $B_zL^{4/3}(m)$ by Lemma \ref{lem17}, a direct application of convergence (\ref{conv6}) in Lemma \ref{lem17} yields
\begin{equation}
\lim_{\tau_0\rightarrow -\infty} \sup_{\tau_0 < s < \tau < \tau_0 + T}\|\sdz S_\alpha^\xi(\tau,s)\divz_s \left(w_c \tens v_c - v_c\tens w_c\right)(s)\|_{B_zL^2(m)} = 0,
\end{equation}
for any $T>0$. This completes the proof that $\sdv_{\tau_0} w^\xi(\tau_0) \rightarrow 0$ in $B_zL^2(m)$. Finally, as in Lemma \ref{lem6}, the divergence-free condition implies that $\sdz w^z(\tau_0) \rightarrow 0$ in $B_zL^2(m)$.
\end{proof}

\subsection{Step 2: Error Estimates}

We now estimate the error terms $R^\prime(\tau)$, which we recall are
\begin{equation}\label{R2}
R^\prime = \div_\xi(v^\xi_c w_c^z - w_c^\xi v^z_c) - \nabla_\xi^\perp(\slap)^{-1}w_c^z \cdot \nabla_\xi w_c^z.
\end{equation}

\begin{lemma}\label{lem21}
Under the assumptions of Proposition \ref{prop6} with $m>2$, the error terms $R^\prime(\tau)$ defined in (\ref{R2}), satisfy
\begin{equation}
\lim_{\tau\rightarrow -\infty}\|R^\prime(\tau)\|_{B_zL^{4/3}(m)} = 0.
\end{equation}
\end{lemma}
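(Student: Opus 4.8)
The plan is to follow the proof of Lemma \ref{lem9} almost verbatim, the essential preliminary being an exact algebraic cancellation that removes the one piece of $R'$ that carries no vanishing factor. First I would expand the divergence-form expression (\ref{R2}) by the Leibniz rule and substitute the self-similar Biot--Savart structure (\ref{biotsavartstructure}) for the velocity components. Writing $v_c^\xi = \sdz(-\slap)^{-1}(w_c^\xi)^\perp - \nabla_\xi^\perp(-\slap)^{-1}w_c^z$ and $\div_\xi(v_c^\xi w_c^z) = (\div_\xi v_c^\xi)w_c^z + v_c^\xi\cdot\nabla_\xi w_c^z$, the contribution $-\nabla_\xi^\perp(-\slap)^{-1}w_c^z\cdot\nabla_\xi w_c^z$ arising from $v_c^\xi\cdot\nabla_\xi w_c^z$ cancels \emph{exactly} against the subtracted term in (\ref{R2}), since $-\nabla_\xi^\perp(\slap)^{-1}w_c^z\cdot\nabla_\xi w_c^z = +\nabla_\xi^\perp(-\slap)^{-1}w_c^z\cdot\nabla_\xi w_c^z$ (using $-(\slap)^{-1}=(-\slap)^{-1}$). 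This is the delicate step: both $w_c^z$ and $\nabla_\xi w_c^z$ are only bounded as $\tau\to-\infty$, so this term could never be controlled in isolation, and only the cancellation saves it.

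After the cancellation I would be left with $R' = \sdz(-\slap)^{-1}(w_c^\xi)^\perp\cdot\nabla_\xi w_c^z + (\div_\xi v_c^\xi)w_c^z - \div_\xi(w_c^\xi v_c^z)$, where every term now carries a factor drawn from $\{w_c^\xi,\ \nabla_\xi w_c^\xi,\ \sdz w_c^\xi\}$, all of which vanish in $B_zL^2(m)$ by Assumption \ref{assumptionii2} of Proposition \ref{prop6} and its conclusion (\ref{bound2}) (note that $\sdv w_c^\xi\to 0$ encodes both $\nabla_\xi w_c^\xi\to 0$ and $\sdz w_c^\xi\to 0$, since $\sdv=(\nabla_\xi,\sdz)^t$). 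The complementary factors ($w_c^z$, $\nabla_\xi w_c^z$, and the associated velocities) are all uniformly bounded by $\Lambda$ and by the gradient bound (\ref{bound1}) of Proposition \ref{prop6}. I would then estimate each term in $B_zL^{4/3}(m)$ exactly as in Lemma \ref{lem9}, using H\"older's inequality (Lemma \ref{lem1}), the Biot--Savart bound (\ref{biotsavart}), the Riesz bounds (\ref{riesz}), and the Sobolev embedding $B_z\dot{H}^1(m)\embeds B_zL^4$ (Lemma \ref{lem3}).

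Concretely, the first term is identical to $r_1$ from Lemma \ref{lem9} and is $\lesssim_m \norm{w_c^\xi}_{B_zL^2(m)}\norm{\nabla_\xi w_c^z}_{B_zL^2(m)}$. For the second term I would invoke incompressibility $\div_\xi v_c^\xi = -\sdz v_c^z$ (valid since $v_c=v-v^g$ is divergence free) and $\sdz v_c^z = \nabla_\xi^\perp\cdot(-\slap)^{-1}\sdz w_c^\xi$, so that (\ref{biotsavart}) gives $\norm{\sdz v_c^z}_{B_zL^4}\lesssim_m\norm{\sdz w_c^\xi}_{B_zL^2(m)}$ and the term is $\lesssim_m\norm{\sdz w_c^\xi}_{B_zL^2(m)}\norm{w_c^z}_{B_zL^2(m)}$. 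For the third term I would expand $\div_\xi(w_c^\xi v_c^z)=(\div_\xi w_c^\xi)v_c^z + w_c^\xi\cdot\nabla_\xi v_c^z$ and use $\norm{v_c^z}_{B_zL^4}\lesssim_m\norm{w_c^\xi}_{B_zL^2(m)}$ together with $\norm{\nabla_\xi v_c^z}_{B_zL^4}\lesssim_m\norm{\nabla_\xi w_c^\xi}_{B_zL^2(m)}$, bounding it by products of $\norm{w_c^\xi}_{B_zL^2(m)}$ and $\norm{\nabla_\xi w_c^\xi}_{B_zL^2(m)}$. Summing the three contributions and applying (\ref{bound2}) and Assumption \ref{assumptionii2} yields $\norm{R'(\tau)}_{B_zL^{4/3}(m)}\to 0$. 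The only genuine obstacle is the opening cancellation; once that is secured the remaining estimates are routine adaptations of Lemma \ref{lem9} with $w_c$ in place of $w$.
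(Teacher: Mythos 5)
Your proposal is correct and follows essentially the same route as the paper: the paper likewise expands $R'$ by the product rule, uses the divergence-free identities to expose the exact cancellation of the $\nabla_\xi^\perp(\slap)^{-1}w_c^z\cdot\nabla_\xi w_c^z$ piece, and reduces $R'$ to the form $v_c\cdot\sdv w_c^z - w_c\cdot\sdv v_c^z - \nabla_\xi^\perp(\slap)^{-1}w_c^z\cdot\nabla_\xi w_c^z$, i.e.\ exactly the expression estimated in Lemma \ref{lem9} with $w_c$ in place of $w$. Your slightly different bookkeeping (keeping the divergence form and estimating the three resulting pieces directly) uses the same ingredients and yields the same bounds.
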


\begin{proof}
Expanding $R^\prime$ using the product rule, $\div_\xi v^\xi_c = -\sdz v^z_c$ and $\div_\xi w^\xi = - \sdz w^z$, we obtain
\begin{equation}
R^\prime(\tau) = v_c \cdot \sdv w^z_c - w_c \cdot \sdv v^z_c - \nabla_\xi^\perp(\slap)^{-1}w_c^z \cdot \nabla_\xi w_c^z,
\end{equation}
which is estimated as in Lemma \ref{lem9}.

\end{proof}

\subsection{Step 3: Invariance of $\alpha$-limit set}

Define $\Phi^\prime(\tau)$ to be the flow on $B_zL^2(m)$ generated by the 2d vorticity equation in the form
\begin{equation}\label{Tflow}
w^*(\tau) = \Tau_\alpha(\tau - \tau_0)w^*(\tau_0) - \int_{\tau_0}^\tau \Tau_\alpha(\tau - s)[\nabla_\xi^\perp \Delta_\xi^{-1}w^*(s) \cdot \nabla_\xi w^*(s)]\ ds.
\end{equation}
Moreover, let $\A^\prime$ be the $\alpha$-limit set of $\{w_c^z(\tau)\}_{\tau < \tau^*}$ in $B_zL^2(m)$. By analogy to Lemma \ref{lem11}, we will show that $\A^\prime$ is invariant under $\Phi^\prime(\tau)$.
\begin{remark}
Since $\Tau_\alpha(\tau - \tau_0)$ satisfies the same estimates as $e^{(\tau -\tau_0)\L}$, the proof of Lemma \ref{flowlem} also yields $\Phi^\prime$ exists for a uniform amount of time on compact subsets of $B_zL^2(m)$ and extends $\varphi^\prime$, the flow generated by (\ref{Tflow}) on $L^2(m) \subset L^2(\R^2)$.\end{remark}
\begin{remark}
Note that like $\Phi$, $\Phi^\prime$ is scalar-valued and is only meant to describe the asymptotic evolution of the $z$-component of the vorticity.
\end{remark}

\begin{lemma}\label{lem13}
Suppose $w(\tau)$ is a mild solution to (\ref{coremild}) satisfying the hypotheses of Proposition \ref{prop6} for some $m > 2$, $\Lambda > 0$, and $\tau^* > -\infty$. Then, for each $2 < m^\prime < m$, the $\alpha$-limit set $\A^\prime \subset B_zL^2(m^\prime)$ of $\{w^z_c(\tau)\}_{\tau < \tau^*}$, $\A^\prime$ is nonempty and there exists a $T(\A^\prime)$ such that $\Phi^\prime(\tau)\A^\prime = \A^\prime$ for each $0\le \tau \le T(\A^\prime)$.
\end{lemma}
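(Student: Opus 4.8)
The plan is to follow the compactness/rigidity scheme of Lemma \ref{lem11} essentially line for line, replacing the scalar propagators $e^{\tau\L}$ and $S_0$ by $\Tau_\alpha$ and $S_\alpha^z$, and substituting for Lemmas \ref{lem15} and \ref{lem9} their $\R^2\times\R$ analogues, Lemmas \ref{lem16}--\ref{lem17} and Lemma \ref{lem21}. First I would record that, by Proposition \ref{prop6}, the trajectory $\{w_c^z(\tau)\}_{\tau<\tau^*}$ is precompact in $B_zL^2(m^\prime)$, so its $\alpha$-limit set $\A^\prime$ is nonempty and compact; by the remark following \eqref{Tflow} (the analogue of Lemma \ref{flowlem} for $\Tau_\alpha$), the flow $\Phi^\prime$ is well defined on $\A^\prime$ for a uniform time $T(\A^\prime)>0$. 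The rigidity conclusion $\Phi^\prime(\tau)\A^\prime=\A^\prime$ is then the content of the two inclusions below.

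For the forward inclusion $\Phi^\prime(\tau)\A^\prime\subset\A^\prime$, fix $w^\infty\in\A^\prime$ and a sequence $\tau_n\to-\infty$ with $w_c^z(\tau_n)\to w^\infty$ in $B_zL^2(m^\prime)$, and set $w^\infty(\tau)=\Phi^\prime(\tau)w^\infty$. The goal is to prove $w_c^z(\tau+\tau_n)\to w^\infty(\tau)$ uniformly for $\tau$ in compact subsets of $(0,T(\A^\prime))$. The essential preliminary step is to rewrite the nonlinearity in \eqref{corezmild} as (a sign times) $\divz F$ with $F=v_c\tens w_c-w_c\tens v_c$ the antisymmetric matrix from Lemma \ref{lem20}; since $F$ is antisymmetric and the spatial derivatives commute at fixed $\tau$, one has $\divz\divz F=0$, so the smoothing estimates \eqref{vector2}--\eqref{vector3} and the convergence statements \eqref{conv5}--\eqref{conv6} of Lemma \ref{lem17} apply to $S_\alpha^z\divz F$. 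I would then subtract the shifted form of \eqref{corezmild} from \eqref{Tflow} and split $w_c^z(\tau+\tau_n)-w^\infty(\tau)$ into four pieces exactly as in Lemma \ref{lem11}: a linear term $L_n$, an error $E^1_n$ collecting $R^\prime$ together with the discrepancy $\nabla_\xi^\perp((\slap)^{-1}-\Delta_\xi^{-1})w_c^z$, an operator-replacement error $E^2_n$, and the core term $I_n$ comparing the 2d transport nonlinearities of $w_c^z(\cdot+\tau_n)$ and $w^\infty$.

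The four pieces are then controlled as follows. I would split $L_n$ into $\Tau_\alpha(\tau)(w_c^z(\tau_n)-w^\infty)$, which vanishes by the choice of $\tau_n$, plus $(S_\alpha^z(\tau+\tau_n,\tau_n)-\Tau_\alpha(\tau))w_c(\tau_n)$, which vanishes by Lemma \ref{lem16} because $\{w_c(\tau_n)\}$ lies in a fixed compact set by Proposition \ref{prop6}. The term $E^1_n$ vanishes by Lemma \ref{lem21} (and Lemma \ref{lem14} for the $\slap$-versus-$\Delta_\xi$ discrepancy) together with the uniform bound $\|\nabla_\xi w_c^z\|_{B_zL^2(m^\prime)}\lesssim 1$ from Proposition \ref{prop6}. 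The term $E^2_n$ is where convergence \eqref{conv5} of Lemma \ref{lem17} enters, with $\tau_0=s+\tau_n\to-\infty$: applying it requires that the matrix nonlinearity $\{v_c(s)\tens w_c(s)-w_c(s)\tens v_c(s)\}_{s<\tau^*}$ be precompact in $B_zL^{4/3}(m)^9$, which is exactly Lemma \ref{lem20}, and one must check that the surviving $\Tau_\alpha$-piece $\Tau_\alpha(\tau)\div_\xi F^{z,\xi}$ reduces, after using $w_c^\xi\to0$ so that $v_c^\xi$ converges to the 2d Biot-Savart field of $w_c^z$, to the 2d nonlinearity $\Tau_\alpha(\tau)[\nabla_\xi^\perp\Delta_\xi^{-1}w^\infty\cdot\nabla_\xi w^\infty]$ of \eqref{mildlimit}. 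Finally $I_n$ is estimated by the gradient smoothing of $\Tau_\alpha$ from Proposition \ref{prop1} and the continuity of the bilinear Biot-Savart map as in \eqref{bilinear}, producing a Volterra inequality for $\|w_c^z(\cdot+\tau_n)-w^\infty(\cdot)\|_{B_zL^2(m^\prime)}$ that closes by the same short-time absorption argument used in Lemma \ref{lem11}. The reverse inclusion $\A^\prime\subset\Phi^\prime(\tau)\A^\prime$ follows from the precompactness of $\{w_c^z(\tau)\}$ exactly as in the last paragraph of Lemma \ref{lem11}.

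The main obstacle, and the reason this is harder than the Theorem \ref{thm1} case, is the non-block-diagonal structure of $S_\alpha$: one cannot work with a closed scalar equation for $w_c^z$, so every place where the 2d argument used a diagonal estimate of the form \eqref{diagest} must instead be routed through the asymptotic replacements $S_\alpha^z\rightsquigarrow\Tau_\alpha$ and $S_\alpha^\xi\rightsquigarrow\Gamma_\alpha$. Making these replacements rigorous in $E^2_n$ forces one to establish compactness not only of the trajectory but of the \emph{entire} nonlinearity (Lemma \ref{lem20}), since the convergence-on-compact-sets statements of Lemma \ref{lem17} only yield uniform convergence over compact families of data; identifying the resulting $\Tau_\alpha$-limit with the genuine 2d transport nonlinearity is the final bookkeeping point that makes the rigidity of invariant sets inherited from Theorem \ref{thm4} applicable.
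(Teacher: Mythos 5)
Your proposal is correct and follows essentially the same route as the paper: the same decomposition into $L_n$, $E_n^1$, $E_n^2$, and $I_n$, with Lemma \ref{lem16} (i.e.\ (\ref{conv3})) replacing (\ref{conv1}) for $L_n$, Lemma \ref{lem17} (i.e.\ (\ref{conv5})) together with the precompactness of the matrix nonlinearity from Lemma \ref{lem20} for $E_n^2$, Lemma \ref{lem21} in place of Lemma \ref{lem9} for the $R^\prime$ error in $E_n^1$, and the same Volterra/absorption closing argument and reverse-inclusion step as in Lemma \ref{lem11}. The paper's write-up is just a terser version of the same argument.
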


\begin{proof}

The proof mostly follows that of Lemma \ref{lem11} with a few changes. As in Lemma \ref{lem11}, fix $2 < m^\prime < m$ and note that by the compactness statement from Proposition \ref{prop6}, we have already shown $\A^\prime \subset B_zL^2(m^\prime)$ is nonempty and compact. Then, we take $w^\infty \in \A^\prime$ and $\tau_n \rightarrow -\infty$ such that $w_c^z(\tau_n) \rightarrow w^\infty$ in $B_zL^2(m^\prime)$ and let $\tau > 0$.

We recall that the functions $w_c^z(\tau+\tau_n)$ satisfy the following integral equations with initial data at time $t_n$:
\begin{equation}\label{eqn6}
w_c^z(\tau + \tau_n) = S_\alpha^z(\tau+\tau_n,\tau_n)w_c(\tau_n) - \int_{\tau_n}^{\tau+\tau_n} S_\alpha^z(\tau+\tau_n,s)\divz_s\left[w_c\tens v_c - v_c \tens w_c\right](s) \ ds.
\end{equation}
Also, recall $w^\infty(\tau)$ satisfies the integral form of (\ref{svorticity2}), which we write as
\begin{equation}
w^\infty(\tau) = \Tau_\alpha(\tau)w^\infty - \int_0^\tau \Tau_\alpha(\tau-s)\left[\nabla_\xi^\perp\Delta_\xi^{-1}w^\infty \cdot \nabla_\xi w^\infty\right](s) \ ds.
\end{equation}
Changing variables in (\ref{eqn6}),
we expand the difference between the two equations as
\begin{equation}
\begin{aligned}
w^z(\tau + \tau_n) -& w^\infty(\tau) = S_\alpha^z(\tau + \tau_n,\tau_n)w_c(\tau_n) - \Tau(\tau)w^\infty\\
	&- \int_0^{\tau} S_\alpha^z(\tau+\tau_n,s + \tau_n)\divz_s \left(w_c\tens v_c - v_c\tens w_c\right)(s + \tau_n)\\
	&\qquad\qquad - \Tau_\alpha(\tau - s)\div_\xi \left(w_c^z \tens v_c^\xi - v_c^z \tens w_c^\xi\right) \ ds\\
	&- \int_0^\tau \Tau_\alpha(\tau - s)\div_\xi \left[(v_c^\xi \tens w_c^z - w_c^\xi \tens v_c^z) - w_c^z\nabla_\xi^\perp\Delta_\xi^{-1}w_c^z\right](s + \tau_n) \ ds\\
	&+ \int_0^\tau \Tau_\alpha(\tau -s)\left[\nabla_\xi \cdot (w^\infty \nabla_\xi^\perp\Delta_\xi^{-1}w^\infty(s))\right]\\
	&\qquad\qquad- \Tau_\alpha(\tau-s)\left[\nabla_\xi \cdot (w_c^z \nabla_\xi^\perp\Delta_\xi^{-1}w_c^z(s + \tau_n))\right] \ ds\\
	&=: L_n(\tau) + E^2_n(\tau) + E^1_n(\tau) + I_n(\tau).
\end{aligned}
\end{equation}
We bound each term in $B_zL^2(m^\prime)$ individually. The first term $L_n$ is handled as in Lemma \ref{lem11} replacing (\ref{conv1}) with (\ref{conv3}). The second term $E_n^2$ is handled as in Lemma \ref{lem11} replacing (\ref{conv2}) with (\ref{conv5}) and using Lemma \ref{lem20}. The third term $E_n^1$ contains the $R^\prime$ part of the error and is handled as in Lemma \ref{lem11} using Lemma \ref{lem21} in place of Lemma \ref{lem9}. Finally, the last term $I_n$ is handled exactly as in Lemma \ref{lem11}.
\end{proof}

\subsection{Step 4: Rigidity}

We now show that the flows $\Phi(\tau)$ and $\alpha G + \Phi^\prime(\tau)$ have the same invariant sets, where $\Phi$ is the flow generated by the standard mild formulation of (\ref{svorticity2}), namely
\begin{equation}
w^*(\tau) = e^{\tau\L}w^*_0 - \int_{0}^\tau e^{(\tau - s)\L}[\nabla^\perp_\xi\Delta^{-1}_\xi w^*(s) \cdot \nabla_\xi w^*(s)] \ ds.\label{mild2d}
\end{equation}

\begin{lemma}
For each $m > 2$, $f\in B_zL^2(m)$, and $\tau \ge 0$, $\Phi(\tau)f = \alpha G + \Phi^\prime(\tau)f$.
\end{lemma}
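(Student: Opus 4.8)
The plan is to reduce the identity to the purely two-dimensional level, where the obstruction that $\alpha G \notin B_zL^2(m)$ disappears because $\alpha G$ is a single Gaussian and hence genuinely lies in $L^2(m)$. By Lemma \ref{flowlem}, $\Phi$ acts slicewise through the 2d flow $\phi$ generated by (\ref{svorticity2}) on $L^2(m)$; and since $\Tau_\alpha$ together with the $\xi$-only nonlinearity commute with evaluation in $z$ (Lemma \ref{extensionlem}), the same argument shows that $\Phi^\prime$ acts slicewise through the 2d flow $\varphi^\prime$ generated by (\ref{Tflow}) on $L^2(m)$. It therefore suffices to prove the scalar, two-dimensional conjugacy $\phi(\tau)(\alpha G + h) = \alpha G + \varphi^\prime(\tau)h$ for every $h \in L^2(m)$ and $\tau \ge 0$, and then apply it in each slice; this is the precise meaning of the asserted identity (with $\Phi$ understood on the affine space $\alpha G + B_zL^2(m)$).

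The algebraic heart of the matter is that $\alpha G$ is a steady state of (\ref{svorticity2}). Indeed $\L G = 0$, and since $G$ is radial while $v^g = \nabla_\xi^\perp \Delta_\xi^{-1}G$ is azimuthal, $v^g \cdot \nabla_\xi G = 0$, so the quadratic nonlinearity vanishes on $\alpha G$. Writing $w = \alpha G + w_c$ and substituting into (\ref{svorticity2}), the two cross terms produce exactly $\alpha v^g \cdot \nabla_\xi w_c + \alpha \nabla_\xi^\perp \Delta_\xi^{-1}w_c \cdot \nabla_\xi G$, while the remaining term is the genuine quadratic $\nabla_\xi^\perp \Delta_\xi^{-1}w_c \cdot \nabla_\xi w_c$. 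Thus $w_c$ solves
\[
\partial_\tau w_c + \alpha v^g \cdot \nabla_\xi w_c + \alpha \nabla_\xi^\perp \Delta_\xi^{-1}w_c \cdot \nabla_\xi G - \L w_c = -\nabla_\xi^\perp \Delta_\xi^{-1}w_c \cdot \nabla_\xi w_c,
\]
whose linear part is precisely the generator of $\Tau_\alpha$; this is, in differential form, the equation defining $\varphi^\prime$.

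To make this rigorous at the mild-solution level, I would set $w_c(\tau) = \varphi^\prime(\tau)h$, which by definition satisfies the $\Tau_\alpha$-Duhamel equation, and put $w = \alpha G + w_c$. Using that $\alpha G$ is steady, the full nonlinearity evaluated at $w$ splits as $\nabla_\xi^\perp\Delta_\xi^{-1}w\cdot\nabla_\xi w = \mathcal{N}w_c + \nabla_\xi^\perp\Delta_\xi^{-1}w_c\cdot\nabla_\xi w_c$, where $\mathcal{N}g := \alpha v^g\cdot\nabla_\xi g + \alpha\nabla_\xi^\perp\Delta_\xi^{-1}g\cdot\nabla_\xi G$. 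Since $\Tau_\alpha$ is generated by $\L - \mathcal{N}$, the variation-of-constants identity
\[
\Tau_\alpha(\tau)g = e^{\tau\L}g - \int_0^\tau e^{(\tau-s)\L}\,\mathcal{N}\,\Tau_\alpha(s)g\, ds
\]
converts the $\Tau_\alpha$-Duhamel equation for $w_c$ into the $e^{\tau\L}$-Duhamel equation with the $\mathcal{N}w_c$ term moved into the forcing; adding $\alpha G = e^{\tau\L}(\alpha G)$ then shows that $w$ satisfies exactly (\ref{mild2d}). By uniqueness of mild solutions (Lemma \ref{flowlem}) this forces $w = \phi(\tau)(\alpha G + h)$, which is the desired conjugacy. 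The main obstacle is precisely the justification of this variation-of-constants step: one must verify the displayed identity as an equality of bounded operators on $L^2(m)$ and the convergence of the resulting iterated integrals, which follows from the smoothing and gradient estimates of Proposition \ref{prop1} (the singular kernels of type $a(\tau-s)^{-3/4}$ being integrable) together with the Biot-Savart control furnished by the embeddings of Lemma \ref{lem3}; one also records the compatibility facts $e^{\tau\L}(\alpha G) = \Tau_\alpha(\tau)(\alpha G) = \alpha G$ and $\mathcal{N}(\alpha G) = 0$, which guarantee that the steady contribution passes unchanged through both formulations.
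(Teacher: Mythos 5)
Your proposal is correct, and it shares the paper's outer skeleton --- reduce to the two-dimensional slicewise statement via Lemma \ref{flowlem} and its analogue for $\Phi'$, then conclude by uniqueness of mild solutions to (\ref{mild2d}) --- but the core step, showing that $\alpha G + \varphi'(\tau)h$ actually satisfies (\ref{mild2d}), is handled by a genuinely different mechanism. The paper argues by regularity: parabolic smoothing makes both candidates classical solutions for $\tau > 0$, so $\alpha G + \varphi'(\tau)h$ satisfies the $e^{\tau\L}$-Duhamel formulation started from any $\tau_0 > 0$, and one passes to $\tau_0 \to 0$ using strong continuity in $L^2(m)$. You instead work entirely at the mild level, using the variation-of-constants identity $\Tau_\alpha(\tau)g = e^{\tau\L}g - \int_0^\tau e^{(\tau-s)\L}\mathcal{N}\Tau_\alpha(s)g\,ds$ (which is essentially the defining construction of $\Tau_\alpha$ in Gallay--Wayne) plus a Fubini manipulation to convert the $\Tau_\alpha$-Duhamel equation into the $e^{\tau\L}$-Duhamel equation with $\mathcal{N}w_c$ absorbed into the forcing. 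Your route is more algebraic and makes the role of the steady state ($\L G = 0$, $v^g\cdot\nabla_\xi G = 0$, hence the exact cancellation producing $\mathcal{N}$) completely explicit, at the cost of justifying the exchange of iterated singular integrals; the paper's route avoids that bookkeeping by leaning on smoothness for positive times, at the cost of a limiting argument as $\tau_0 \searrow 0$. You also correctly read the lemma as the affine conjugacy $\phi(\tau)(\alpha G + h) = \alpha G + \varphi'(\tau)h$, which is the meaning the paper's own proof implicitly uses (its $w_1(0) = w_2(0)$ only makes sense under that reading). The one step you should not leave implicit is the integrability near $s = 0$ of $\|\mathcal{N}\,\varphi'(s)h\|$ in the relevant norm, which follows from the $a(s)^{-1/2}$ gradient smoothing of Proposition \ref{prop1} exactly as you indicate.
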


\begin{proof}
Since $\Phi(\tau)$ and $\Phi^\prime(\tau)$ are extensions of the corresponding two-dimensional flows $\phi(\tau)$ and $\phi^\prime(\tau)$ on $L^2(m) \subset L^2(\R^2)$, it suffices to show that $\phi(\tau)f = \alpha G +  \phi^\prime(\tau)f$ for each $f\in L^2(m)$. Thus, take $w_1(\tau) = \alpha G + \phi^\prime(\tau)f$ and $w_2(\tau) = \phi(\tau)f$ so that $w_1,w_2 \in C([0,\tau],L^2(m))$ such that $w_1(0) = w_2(0)$ for $w_1$ solving (\ref{mildlimit}) and $w_2$ solving (\ref{mild2d}). Then, certainly, by parabolic regularity, $w_1(\tau), w_2(\tau)$ are smooth for $\tau > 0$. Thus, $w_1(\tau)$ solves (\ref{mild2d}) when the initial data is taken at time $\tau_0 > 0$. That is,
\begin{equation}
w_1(\tau) = e^{(\tau - \tau_0)\L}w_1(\tau_0) - \int_{\tau_0}^\tau e^{(\tau-s)\L}\left[\left(\nabla^\perp \Delta^{-1} w_1(s) \right) \cdot \nabla w_1(s)\right] \ ds.
\end{equation}
Taking the limit as $\tau_0 \rightarrow 0$ and using strong continuity in $L^2(m)$, we find that $w_1$ satisfies (\ref{mild2d}) with initial data taken at time $\tau_0 = 0$. Hence, the uniqueness of solutions to (\ref{mild2d}) guarantees $w_1 = w_2$ in $C([0,\tau];L^2(m))$ and completes the proof.
\end{proof}

\begin{proof}[Proof of Theorem \ref{thm2}]
By Lemma \ref{mildlemma}, $w_c$ satisfies the integral equation (\ref{coremild}). Moreover, by assumption, $w_c$ satisfies the hypotheses of Proposition \ref{prop6} for some $\tau^* > -\infty$, $\Lambda > 0$, and $m > 2$. Taking $2 < m^\prime < m$, we have that $\A^\prime$, the $\alpha$-limit set of $\{w^z_c(\tau)\}_{\tau < \tau^*}$ is precompact in $B_zL^2(m^\prime)$, and, by the preceding lemma, $\A^\prime$ is invariant under the flow $\Phi(\tau)$ generated by the 2d vorticity equation. Thus, by the argument in Section 4.5, we conclude $\A^\prime = \{0\}$. Equivalently, by the compactness of $\A^\prime$,
$$\lim_{\tau \rightarrow -\infty} \left\|w - \begin{bmatrix} 0 \\ 0 \\ \alpha G\end{bmatrix}\right\|_{B_zL^2(m^\prime)} = 0.$$
Hence, $\omega$ lies within the uniqueness class shown in \cite{jacob} and the proof is complete. 
\end{proof}

\medskip
 

\end{document}